\documentclass[11pt,a4paper]{article}
\usepackage[centertags]{amsmath}
\usepackage{amsfonts}
\usepackage{amssymb}
\date{}
\usepackage{amsthm}





\usepackage{caption}
\usepackage{enumerate}
\usepackage{fancybox}
\usepackage{float}

\usepackage[text={6.7in,9.5in},centering]{geometry}

\usepackage{hyperref}

\usepackage{newlfont}
\usepackage{subcaption}
\usepackage{todonotes}
\usepackage{txfonts}
\usepackage{url}
\usepackage{varioref}
\newcommand{\md}{\thinspace \mathrm{d}} 
\newcommand{\RR}{{\mathbb R}}
\theoremstyle{plain}
\newtheorem{theorem}{Theorem}[section]
\newtheorem{corollary}[theorem]{Corollary}
\newtheorem{lemma}[theorem]{Lemma}
\newtheorem{proposition}[theorem]{Proposition}
\theoremstyle{definition}
\newtheorem{definitioin}{Definition}[section]
\theoremstyle{remark}
\newtheorem{remark}{Remark}[section]
\numberwithin{equation}{section}


\newcommand{\absn}[1]{\lvert\thinspace {#1} \thinspace\rvert}
\newcommand{\arrayoptions}[2]{\setlength{\arraycolsep}{#1}\renewcommand{\arraystretch}{#2}}
\newcommand{\defeq}{\coloneqq}
\newcommand{\ess}{\ensuremath{\mathrm{ess}}} \newcommand{\eg}{e.\,g.}
\newcommand{\ie}{i.\,e.}

\floatstyle{ruled} 
\setlength{\textfloatsep}{1em}
\restylefloat{figure}
\restylefloat{table}

\begin{document}

\title{\bf Uniqueness and Stability of Optimizers for a Membrane
  Problem}

\author{Behrouz Emamizadeh\thanks{School of Mathematical Sciences,
    University of Nottingham Ningbo China, 199 Taikang East Road,
    Ningbo, Zhejiang China 315100,
    \href{mailto:Behrouz.Emamizadeh@nottingham.edu.cn}{Behrouz.Emamizadeh@nottingham.edu.cn}}
  \and Amin Farjudian\thanks{School of Computer Science, University of
    Nottingham Ningbo China, 199 Taikang East Road, Ningbo, Zhejiang
    China 315100,
    \href{mailto:Amin.Farjudian@gmail.com}{Amin.Farjudian@gmail.com}}\,
  \thanks{Corresponding author} \and {Yichen Liu}\thanks{Beijing
    International Center for Mathematical Research, Peking University,
    China,
    \href{mailto:yichen.liu07@yahoo.com}{yichen.liu07@yahoo.com}} \and
  Monica Marras\thanks{Dipartimento di Matematica e Informatica,
    Universit\`a di Cagliari, viale Merello 92, 09123 Cagliari, Italy,
    \href{mailto:mmarras@unica.it}{mmarras@unica.it}}}

\maketitle

\begin{abstract}

  We investigate a PDE-constrained optimization problem, with an
  intuitive interpretation in terms of the design of robust membranes
  made out of an arbitrary number of different materials. We prove
  existence and uniqueness of solutions for general smooth bounded
  domains, and derive a symmetry result for radial ones. We strengthen
  our analysis by proving that, for this particular problem, there are
  no non-global local optima. When the membrane is made out of two
  materials, the problem reduces to a shape optimization problem. We
  lay the preliminary foundation for computable analysis of this type
  of problem by proving stability of solutions with respect to some of
  the parameters involved.  
\end{abstract}


\begin{description}
\item[Key Words:] Optimization, Stability, Radial symmetry, Boundary
  value problem, Rearrangements of functions.
\item[Mathematics Subject Classification:] 65K10, 74H55, 35J20, 35J25.
\end{description}



\section{Introduction}

Consider the boundary value
problem:
\begin{equation}
  \label{eq:bvp_main}
  \left\{
    \arrayoptions{1ex}{1.2}
    \begin{array}{ll}
      -\Delta u+g(x)u=f(x), &\text{in}\;D, \\
      u=0, &\text{on}\;\partial D,
    \end{array}
  \right.
\end{equation}
in which $D\subseteq \RR^N$ is a smooth domain, $N \in \{ 2,3 \}$, $g$
is a non-negative function in $L^\infty(D)$, and $f$ is a non-negative
function in $L^2(D)$.

When the range of the function $g$ is a finite set, say, $\{\alpha_1,
\alpha_2, \ldots, \alpha_n\}$, this equation may be interpreted in a
very intuitive way. Indeed, the boundary value
problem~(\ref{eq:bvp_main}) models an elastic membrane, constructed
out of $n$ different materials, fixed around the boundary, and subject
to a vertical force $f(x)$ at each point $x$. The solution $u$ denotes
the displacement of the membrane from the rest position.

Let us assume that we have been given the $n$ constituent materials,
together with the force $f$ and the geometry of the domain, and our
task is to construct a \emph{robust membrane} out of the given
materials. In this paper, we demonstrate how this may be
achieved. More formally, we associate the following {\em energy}
functional with the boundary value problem~(\ref{eq:bvp_main}):
\begin{equation}
\label{eq:energy_main}
\Phi(g) \defeq \int_Dfu_g \md x=\int_D|\nabla u_g|^2 \md x+\int_Dgu_g^2 \md x,
\end{equation}
in which, $u_g$ is the unique solution of (\ref{eq:bvp_main}). At an
intuitive level, this energy functional is mean to measure the
\emph{vulnerability} of the membrane. It should be straightforward to
verify that the following identity follows from the variational
formulation of $u_g$:
\begin{equation}
  \label{eq:variational_formulation}
  \Phi(g)=\sup_{v\in H^1_0(D)}\left\{2\int_Dfv \md x-\int_D(|\nabla
    v|^2+gv^2) \md x\right\}.
\end{equation}
We assume that the information about the constituent materials is
provided in a given function $g_0$ which satisfies $0\le g_0\le 1$,
and which is is not identically zero. We let
$\cal{R}\equiv\cal{R}(g_0)$ denote the \emph{rearrangement} class
generated by $g_0$ (Definition~\ref{eq:rearrangeclass}
\vpageref[below]{eq:rearrangeclass}). To obtain a robust membrane, we
need to obtain the arrangement of $g_0$ with the least vulnerability,
{\ie}, we need to solve the following minimization problem:
\begin{equation}
\label{eq:main_problem}
\inf_{g\in\cal{R}}\Phi(g).
\end{equation}

\begin{remark}
  The minimization problem~(\ref{eq:main_problem}) is of interest from
  a pure mathematical perspective as well. Indeed, the maximum
  principle ensures that $u_g$, the solution of~(\ref{eq:bvp_main}),
  is positive. Hence, the integral $\int_D fu_g  \md x$ is the
  $L^1(\mu)$-norm of $u_g$, with $d\mu$ being the measure which is
  absolutely continuous with respect to the Lebesgue measure $ \md x$,
  having $f$ as its Radon-Nikodym derivative with respect to
  $ \md x$. Minimization of various norms of solutions of partial
  differential equations is a classical topic of interest among
  mathematicians.  
\end{remark}

\subsection{Approach and contributions}

Our approach towards proving the solvability of
(\ref{eq:main_problem}) is based on the well-developed theory of
\emph{rearrangements of
  functions}~\cite{Talenti:Art_Rearranging:2016}. Specifically,
we use the theory developed by
G.~R.~Burton~\cite{Burton:1987,Burton:1989} for optimization over
rearrangement classes. To this end, we first relax the minimization
problem (\ref{eq:main_problem}) by extending the admissible set
$\cal{R}$ to its weak closure $\overline{\cal{R}}$ with respect to
$L^2$-topology. Once the relaxed problem is shown to be solvable, we
will demonstrate how the appropriate restrictions on the force
function $f$ imply that solutions of the relaxed problem are indeed
solutions of the original problem (\ref{eq:main_problem}).

We strengthen our results by proving that the optimization
problem~(\ref{eq:main_problem}) has no non-global local optima, and by
showing that, when $D$ is a ball and $f$ is radial, then the solution
of (\ref{eq:main_problem}) is radial and non-increasing. 

\begin{remark} An appealing aspect of our method is that it can also
  be used when the function $g$ belongs to the larger class $L^p(D)$,
  for $1<p<\infty$, in which case, only minor modifications will be
  required. We prefer, however, to focus on the case $g\in L^\infty
  (D)$, in order to minimize technicalities, and keep the model more
  realistic.
\end{remark}

In the second part of the paper, we discuss some stability
results. These results are of utmost importance in setting up a
framework for computable analysis of problems such as our main
problem~(\ref{eq:main_problem}).

\subsection{Related work}

For any given set $E \subseteq D$, by $\chi_E$ we denote the
characteristic function of $E$, {\ie}, $\chi_E(x)=1$ if $x\in E$, and
$\chi_E(x)=0$ if $x \notin E$. Henrot and
Maillot~\cite{Henrot_Maillot:2001:Optim_Shape_Actuator} have
investigated the special case of the minimization problem
(\ref{eq:main_problem}), in which $g_0=\chi_{E_0}$, for some $E_0
\subseteq D$ with $|E_0|=\alpha$. Under this assumption, one would get
${\cal R}=\{\chi_E: E \subseteq D \wedge |E|=\alpha\}$. In simple
terms, the rearrangement class generated by $g_0$ would be exactly the
set of all characteristic functions of those measurable subsets of $D$
that have the same Lebesgue measure as $E_0$.

Henrot and Maillot~\cite{Henrot_Maillot:2001:Optim_Shape_Actuator}
prove the solvability for this special case, and state the minimality
condition in terms of tangent cones. Since the underlying function
space is $L^\infty(D)$, they are able to derive a convenient
formulation of the tangent cone of an appropriate convex set.

The method employed in \cite{Henrot_Maillot:2001:Optim_Shape_Actuator}
is inadequate for addressing the optimization problem
(\ref{eq:main_problem}) for general generators $g_0$. The theory that
we shall introduce in this paper, however, not only furnishes an
answer to the aforementioned question, but also can be used for a
broader range that includes other design problems.

The second part of the current paper addresses some further issues,
including stability properties of the solutions. This is part of a
broader programme of laying the foundations for robust computable
analysis of rearrangement optimization problems in particular, and
shape optimization problems in general. In this regard, we have
carried out some general stability analyses pertaining to
rearrangement optimization classes, which may be found
in~\cite{Liu_Emamizadeh_Farjudian:Optim_fixed_volume_stability:2016}.

\begin{remark}
  Parts of an earlier draft of this article have appeared in the PhD
  dissertation of one of the
  co-authors~\cite[Sec.~3.3]{Liu:PhD_Thesis:2015}.
\end{remark}

\subsection{Structure of the paper}

The remainder of the paper is structured as follows:

\begin{itemize}
\item Section~\ref{sec:preliminaries} contains preliminary material
  from the theory of rearrangements of functions.

\item In Section~\ref{sec:exist-uniq-optim} we prove existence and
  uniqueness of optimal solutions, and provide a radial symmetry
  result as well. For the minimization problem, we will show that
  there are no non-global local optima. Finally, we provide some
  remarks on the corresponding maximization problem.

\item In Section~\ref{sec:shape-optimization}, we discuss the shape
  optimization variant of the main problem. Specifically, we will
  discuss monotonicity and stability results related to the case where
  the generator is two-valued.

\item In Section~\ref{sec:numerical-simulation}, we provide some
  remarks on the numerical simulation of the optimization problem.
\item In order to avoid breaking the flow of the paper, the lengthy
  proof of Lemma~\ref{lem:basic_props_of_energy_func} (from
  Section~\ref{sec:exist-uniq-optim}) is moved to
  Section~\ref{sec:proof-lemma-phi}.

\item In Section~\ref{sec:concluding-remarks}, we finish the paper
  with some concluding remarks.
\end{itemize}


\section{Preliminaries}
\label{sec:preliminaries}

In this section, we recall some well-known results from the theory of
rearrangements of functions. Henceforth, we denote the $N$-dimensional
Lebesgue measure of a measurable set $E$ by $|E|$. Moreover, for a
Lebesgue measurable function $h:D\to[0,\infty)$ and $\alpha \geq 0$,
we let:
\begin{equation*}
  \lambda_{h}(\alpha)\defeq | \left\{
  x\in D: h(x)\ge\alpha\right\}|.
\end{equation*}

\begin{definitioin}
  \label{def:rearrange}
  Let $g, g_0:D\to[0,\infty)$ be Lebesgue measurable. We say that $g$ is a
  rearrangement of $g_0$ if and only if $\forall \alpha \geq 0: \lambda_{g_0}(\alpha) = \lambda_{g}(\alpha)$.
\end{definitioin}

\begin{definitioin}
  \label{def:inrearrange} For a Lebesgue measurable
  $g:D\to[0,\infty)$, the essentially unique decreasing rearrangement
  $g^\Delta$ is defined on $(0,|D|)$ by $g^\Delta(s) \defeq
  \max\left\{\alpha:\lambda_g(\alpha)\ge s\right\}$. The essentially
  unique increasing rearrangement $g_\Delta$ of $g$ is defined by
  $g_\Delta(s) \defeq g^\Delta(|D|-s)$.
\end{definitioin}

\begin{definitioin}
\label{eq:rearrangeclass}
The set $\cal{R}\equiv\cal{R}(g_0)$, called the rearrangement class generated by $g_0$, is defined as follows
\begin{equation*}
  \cal{R}(g_0) \defeq \left\{g:D\to[0,\infty):g\,\ \text{is a rearrangement of}\,\ g_0\right\}.
\end{equation*}
\end{definitioin}

\begin{definitioin}
  For a function $f:D\to[0,\infty)$, we say that the graph of $f$ has
  no significant flat sections on $D$ if $\forall c \geq 0: \left|\left\{x\in
      D:f(x)=c\right\}\right|=0$.
\end{definitioin}

Henceforth, the support of $g$ will be denoted by
$S(g)\equiv\left\{x\in D:g(x)>0\right\}$, and the reader should
distinguish this definition of support from the usual topological
definition. We use $\overline{\cal{R}}$ to denote the weak closure of
$\cal{R}$ in $L^2(D)$. It is well-known that $\overline{\cal{R}}$ is
convex, and weakly compact in $L^2(D)$.

\begin{lemma}
  \label{lem:rearrange1} Let $\overline{\cal{R}}$ be the weak closure
  of $\cal{R}$ in $L^2(D)$. Then, $\overline{\cal{R}}\subseteq
  L^\infty(D)$, and $\forall g\in \overline{\cal{R}}:
  \left\|g\right\|_\infty\le\left\|g_0\right\|_\infty$.
\end{lemma}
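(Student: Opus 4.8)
The plan is to show that every element of $\overline{\cal{R}}$ inherits a pointwise bound from $g_0$, and the natural vehicle is the characterization of the weak closure via rearrangement theory. First I would recall that any $g \in \cal{R}$ satisfies $0 \le g \le \|g_0\|_\infty$ almost everywhere: since $g$ and $g_0$ share the same distribution function $\lambda_g = \lambda_{g_0}$, and $\lambda_{g_0}(\alpha) = 0$ for every $\alpha > \|g_0\|_\infty$ (by definition of the essential supremum), we get $\lambda_g(\alpha) = 0$ for such $\alpha$, hence $g \le \|g_0\|_\infty$ a.e. In particular, $\cal{R}$ is contained in the closed ball $B$ of radius $\|g_0\|_\infty$ in $L^\infty(D)$, viewed as a subset of $L^2(D)$.

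The key step is then to argue that this ball $B$ is weakly closed in $L^2(D)$, so that taking the weak $L^2$-closure of $\cal{R}$ cannot leave $B$. The cleanest way is to note that $B = \{ g \in L^2(D) : 0 \le g \le \|g_0\|_\infty \text{ a.e.} \}$ can be written as an intersection of half-spaces: $g \in B$ if and only if $\int_D g \varphi \,\md x \ge 0$ for all nonnegative $\varphi \in L^2(D)$ and $\int_D (\|g_0\|_\infty - g)\varphi \,\md x \ge 0$ for all nonnegative $\varphi \in L^2(D)$. Each such condition defines a weakly closed half-space in $L^2(D)$, being the preimage of a closed set under the weakly continuous linear functional $g \mapsto \int_D g\varphi \,\md x$; an arbitrary intersection of weakly closed sets is weakly closed. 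Hence $B$ is weakly closed, and since $\cal{R} \subseteq B$ we conclude $\overline{\cal{R}} \subseteq B \subseteq L^\infty(D)$, with $\|g\|_\infty \le \|g_0\|_\infty$ for every $g \in \overline{\cal{R}}$.

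Alternatively, and perhaps more in the spirit of the rearrangement literature one could invoke the known structural description of $\overline{\cal{R}}$: by Burton's theory, $\overline{\cal{R}}$ coincides with the set of functions $g$ whose ``level integrals'' are dominated by those of $g_0$ in the Hardy--Littlewood sense (equivalently, $g \prec g_0$), and this order-theoretic characterization immediately yields the $L^\infty$ bound. I would, however, prefer the elementary half-space argument above, since it is self-contained and does not require quoting the full machinery.

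The only mild obstacle is making sure the distributional argument for $g \le \|g_0\|_\infty$ a.e.\ on elements of $\cal{R}$ is stated correctly at the level of the \emph{essential} supremum (the set where $g$ exceeds its essential sup has measure zero, which is exactly what $\lambda_g(\alpha) = 0$ for $\alpha > \|g_0\|_\infty$ encodes, after a limiting argument letting $\alpha \downarrow \|g_0\|_\infty$ along a sequence). Everything else is routine functional analysis, and no genuine difficulty is expected.
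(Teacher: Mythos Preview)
Your proposal is correct, but it takes a genuinely different route from the paper. You argue structurally: the order interval $B=\{g\in L^2(D):0\le g\le\|g_0\|_\infty\text{ a.e.}\}$ is convex and strongly closed (equivalently, an intersection of weakly closed half-spaces), hence weakly closed, so $\overline{\cal R}\subseteq B$ follows immediately once $\cal R\subseteq B$ is checked via the distribution function. The paper instead argues by contradiction with a single concrete test function: assuming $E=\{g>\|g_0\|_\infty\}$ has positive measure, it tests the weak convergence $g_n\rightharpoonup g$ against $\chi_E$ and obtains $\|g_0\|_\infty|E|<\int_E g=\lim\int_E g_n\le\|g_0\|_\infty|E|$. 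Your approach is cleaner and more conceptual, and it makes transparent why the result holds (weak closedness of order intervals); the paper's argument is more elementary in that it uses only the definition of weak convergence and avoids invoking any closure theorem, at the cost of being slightly ad hoc. In effect, the paper's test against $\chi_E$ is one particular instance of the half-space tests you quantify over.
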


\begin{proof}
  In order to derive a contradiction, we suppose $g\notin
  L^\infty(D)$. Hence, for every positive $M$:
  \begin{equation*}
    |\left\{x\in
      D:g(x)>M\right\}|>0.
  \end{equation*}
  Let us choose $M=\left\|g_0\right\|_\infty$, and set $E \defeq \left\{x\in
    D:g(x)>\left\|g_0\right\|_\infty\right\}$. Since
  $g\in\overline{\cal{R}}$, there exists
  $\left\{g_n\right\}\subseteq\cal{R}$ such that $g_n\rightharpoonup
  g$ in $L^2(D)$. Then, we have:
\begin{equation}
\label{eq21}
\int_{E} g_n \md x=\int_D g_n\chi_{E} \md x\to\int_D g\chi_{E} \md x=\int_{E} g \md x.
\end{equation}
From the definition of $E$ and the fact that $\int_{E}g_n \md x\le
\left\|g_0\right\|_\infty|E|$, in conjunction with (\ref{eq21}), we
deduce:
\begin{equation}
\label{eq22}
\left\|g_0\right\|_\infty|E|<\int_{E}g \md x=\lim_{n\to\infty}\int_E
g_n \md x\le \left\|g_0\right\|_\infty|E|.
\end{equation}
Obviously, (\ref{eq22}) is a contradiction. The above argument implies
that the measure of $E$ is zero.  Hence,
$\left\|g\right\|_\infty\le\left\|g_0\right\|_\infty$. This completes
the proof of the lemma.
\end{proof}

\begin{lemma}
  \label{lem:rearrange3}
  Suppose $\left\{g_n\right\}\subseteq L_+^\infty(D)$, and $g\in
  L^2(D)$. Suppose $g_n\rightharpoonup g$ in $L^2(D)$.  Then, $g$ is
  non-negative a.e. in $D$.
\end{lemma}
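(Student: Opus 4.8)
The plan is to argue by contradiction, using the defining property of weak convergence tested against the characteristic function of the set where $g$ is negative. First I would set $E \defeq \{x \in D : g(x) < 0\}$, which is a Lebesgue measurable set because $g$ is measurable. Since $D$ is bounded we have $|D| < \infty$, so $\chi_E \in L^2(D)$ is a legitimate test function. Testing $g_n \rightharpoonup g$ against $\chi_E$ yields
\begin{equation*}
  \int_E g_n \md x = \int_D g_n \chi_E \md x \to \int_D g\, \chi_E \md x = \int_E g \md x.
\end{equation*}

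Next I would invoke the sign hypothesis: since each $g_n \in L^\infty_+(D)$, i.e.\ $g_n \geq 0$ a.e., the left-hand side is non-negative for every $n$, and therefore so is its limit, giving $\int_E g \md x \geq 0$. On the other hand, $g < 0$ on $E$ by construction, so $\int_E g \md x \leq 0$. Combining the two inequalities forces $\int_E g \md x = 0$, and since the integrand is strictly negative on $E$ this gives $|E| = 0$, which is exactly the assertion $g \geq 0$ a.e.\ in $D$.

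If one wishes to make the final implication completely explicit rather than appealing to the standard fact that a strictly negative integrable function with vanishing integral lives on a null set, I would instead work with the increasing family $E_k \defeq \{x \in D : g(x) \leq -1/k\}$ for $k \in \mathbb{N}$, run the same weak-convergence argument on each $E_k$ to obtain $0 \leq \int_{E_k} g \md x \leq -\tfrac{1}{k} |E_k| \leq 0$, conclude $|E_k| = 0$ for every $k$, and then use countable subadditivity together with $E = \bigcup_k E_k$ to get $|E| = 0$. I do not anticipate any genuine obstacle in this proof; the only place the standing assumptions are really used is in guaranteeing that $\chi_E$ (or $\chi_{E_k}$) belongs to $L^2(D)$, which relies on the finiteness of $|D|$, and of course on the pointwise non-negativity of the $g_n$.
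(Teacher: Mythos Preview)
Your proof is correct and takes a more elementary route than the paper's. The paper invokes Mazur's Lemma to obtain convex combinations of the $g_n$ converging \emph{strongly} to $g$ in $L^2(D)$, hence in measure, and then extracts an a.e.-convergent subsequence; non-negativity of $g$ is inherited pointwise from the non-negativity of the convex combinations. Your argument instead tests the weak convergence directly against $\chi_E$ (or $\chi_{E_k}$), avoiding Mazur's Lemma entirely and using only the definition of weak convergence together with the trivial fact that limits of non-negative reals are non-negative. Your approach is arguably cleaner for this particular statement, since Mazur's Lemma is a nontrivial Hahn--Banach consequence, whereas your proof needs nothing beyond basic measure theory and the finiteness of $|D|$. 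The paper's method, on the other hand, generalizes more readily when one wants to transfer other a.e.\ pointwise properties (not merely sign) from a weakly convergent sequence to its limit.
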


\begin{proof} This is an immediate consequence of Mazur's Lemma. Indeed,
  by Mazur's Lemma, there exists a sequence $\left\{v_n\right\}$ in the
  convex hull of the set $\left\{g_n:n\in\mathbb{N}\right\}$ such that
  $v_n\to g$ in $L^2(D)$. Therefore, $v_n\to g$ in measure. Whence,
  there exists a subsequence of $\left\{v_n\right\}$ which converges
  to $g$ a.e. in $D$. This completes the proof.
\end{proof}

The next lemma is easy to prove:

\begin{lemma}
\label{rearrange4}
Suppose that $f:D\to [0,\infty)$ is measurable. Then, for every
measurable subset $E\subseteq D$:
  \begin{equation*}
    \int_Ef \md x\ge\int_0^{|E|}f_\Delta(s) \md s.
  \end{equation*}
\end{lemma}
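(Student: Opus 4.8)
The plan is to reduce the inequality to the classical Hardy–Littlewood rearrangement inequality applied to the pair $f$ and $\chi_E$. First I would recall the basic fact that for nonnegative measurable functions $\phi,\psi$ on $D$ one has $\int_D \phi\psi \md x \ge \int_0^{|D|} \phi_\Delta(s)\psi^\Delta(s)\md s$ (the ``oppositely sorted'' version of Hardy–Littlewood): pairing the increasing rearrangement of one factor against the decreasing rearrangement of the other gives a lower bound on the integral of the product. I would then take $\phi = f$ and $\psi = \chi_E$. Since $\chi_E$ takes only the values $0$ and $1$ and $|\{\chi_E = 1\}| = |E|$, its decreasing rearrangement on $(0,|D|)$ is exactly $(\chi_E)^\Delta = \chi_{(0,|E|)}$. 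Substituting, the right-hand side becomes $\int_0^{|E|} f_\Delta(s)\md s$, while the left-hand side is $\int_D f\chi_E\md x = \int_E f\md x$, which is precisely the claimed inequality.

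Alternatively, if one prefers a self-contained argument avoiding a named inequality, I would argue directly via the layer-cake (distribution-function) representation. Write $\int_E f\md x = \int_0^\infty |\{x\in E : f(x) > t\}|\md t$. For each $t$, set $A_t = \{x\in D : f(x) > t\}$, so $|A_t| = \lambda_f(t)$ up to the usual boundary-level ambiguity, and observe $|\{x\in E : f(x)>t\}| = |E\cap A_t| \ge \max\{0, |E| + |A_t| - |D|\} = \max\{0,\,|E| - (|D|-|A_t|)\}$. On the other side, $\int_0^{|E|} f_\Delta(s)\md s = \int_0^\infty |\{s\in(0,|E|) : f_\Delta(s) > t\}|\md t$, and from the definition $f_\Delta(s) = f^\Delta(|D|-s)$ one computes $\{s\in(0,|E|): f_\Delta(s)>t\} = \{s\in(0,|E|): f^\Delta(|D|-s)>t\}$, which has measure $\max\{0,\,|E| - (|D| - \lambda_f(t))\}$ since $f^\Delta > t$ exactly on an interval $(0,\lambda_f(t))$ (modulo endpoints). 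Matching the two integrands in $t$ pointwise then yields the inequality after integrating in $t$.

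I expect the only real care needed is bookkeeping around level sets of positive measure and the endpoint conventions in Definition~\ref{def:inrearrange}: the identity $|\{s : f^\Delta(s) > t\}| = \lambda_f(t)$ holds for a.e.\ $t$, and one must check that the set where $\chi_E$ or $f_\Delta$ jumps contributes nothing, so that the pointwise-in-$t$ comparison $|E\cap A_t| \ge \max\{0,|E|-|D|+|A_t|\}$ integrates correctly. This is routine but is genuinely the crux; everything else is either the cited Hardy–Littlewood bound or direct substitution. Since the paper calls this lemma ``easy to prove,'' I would present the Hardy–Littlewood route in one or two lines and relegate the elementary verification of $(\chi_E)^\Delta = \chi_{(0,|E|)}$ to a parenthetical remark.
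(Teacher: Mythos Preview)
Your proposal is correct. The paper does not actually supply a proof of this lemma: it simply introduces it with the sentence ``The next lemma is easy to prove'' and moves on, so there is no argument to compare against in detail. Your Hardy--Littlewood route (taking $\psi=\chi_E$ so that $(\chi_E)^\Delta=\chi_{(0,|E|)}$ and invoking the lower bound $\int_D \phi\psi\,\md x \ge \int_0^{|D|}\phi_\Delta\psi^\Delta\,\md s$) is exactly the kind of one-line justification the paper's phrasing anticipates, and your alternative layer-cake computation is a legitimate self-contained substitute.
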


\begin{lemma}
\label{rearrange2}
For every $g$ in $\overline{\cal{R}}$ we have $|S(g_0)|\le|S(g)|$.
\end{lemma}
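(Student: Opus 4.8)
The plan is to reduce the claim to a statement about the increasing rearrangement of $g_0$ and to extract that statement from the weak convergence by testing against the indicator of the zero set of $g$. First, observe that when $g$ is genuinely a rearrangement of $g_0$, that is $g\in\cal{R}$, one even has $|S(g)|=|S(g_0)|$: since $S(h)=\bigcup_{n\ge 1}\{x\in D:h(x)\ge 1/n\}$ for any measurable $h\ge 0$, we get $|S(h)|=\lim_{n\to\infty}\lambda_h(1/n)$, and $\lambda_g=\lambda_{g_0}$ by Definition~\ref{def:rearrange}. Thus the content of the lemma lies entirely in the passage to the weak closure. Fix $g\in\overline{\cal{R}}$ and pick $\{g_n\}\subseteq\cal{R}$ with $g_n\rightharpoonup g$ in $L^2(D)$. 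Each $g_n$ is non-negative and, by Lemma~\ref{lem:rearrange1}, lies in $L^\infty(D)$, so Lemma~\ref{lem:rearrange3} yields $g\ge 0$ a.e.\ in $D$; hence, up to a null set, $D$ splits into the disjoint union of $S(g)$ and $A\defeq\{x\in D:g(x)=0\}$, and $|S(g)|=|D|-|A|$.

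The crucial step is to test the weak convergence against $\chi_A\in L^2(D)$, giving $\int_A g_n\md x\to\int_A g\md x=0$. On the other hand, Lemma~\ref{rearrange4} applied with $f=g_n$ and $E=A$, together with the fact that the increasing rearrangement of $g_n$ coincides with that of $g_0$ (it depends only on the distribution function, which $g_n$ shares with $g_0$), gives $\int_A g_n\md x\ge\int_0^{|A|}(g_0)_\Delta(s)\md s$ for every $n$. Passing to the limit, $\int_0^{|A|}(g_0)_\Delta(s)\md s\le 0$, and non-negativity of the integrand forces $(g_0)_\Delta=0$ a.e.\ on $(0,|A|)$.

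To conclude, recall $(g_0)_\Delta(s)=g_0^\Delta(|D|-s)$, so $g_0^\Delta=0$ a.e.\ on $(|D|-|A|,|D|)$; since $g_0^\Delta$ is non-increasing on $(0,|D|)$, this means $|\{s:g_0^\Delta(s)>0\}|\le|D|-|A|$. As $g_0^\Delta$ has the same distribution function as $g_0$, the same argument as above gives $|\{s:g_0^\Delta(s)>0\}|=|S(g_0)|$, whence $|S(g_0)|\le|D|-|A|=|S(g)|$, as required. The only genuine idea here is pairing the weak convergence with the sharp lower bound of Lemma~\ref{rearrange4} evaluated exactly on the zero set of $g$ — weak convergence on its own cannot prevent the mass of $g_n$ from concentrating on $A$. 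The one other subtlety is that $S(g)$ denotes the positivity set rather than the topological support, which is precisely why the non-negativity of $g$ from Lemma~\ref{lem:rearrange3} is invoked, to identify the complement of $S(g)$ with $\{g=0\}$.
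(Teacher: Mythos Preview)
Your proof is correct and follows essentially the same approach as the paper: both test the weak convergence against $\chi_{S(g)^c}$, invoke Lemma~\ref{rearrange4} to bound $\int_{S(g)^c} g_n\,\md x$ below by $\int_0^{|S(g)^c|}(g_0)_\Delta\,\md s$, and pass to the limit. The only difference is cosmetic --- the paper argues by contradiction (assuming $|S(g)|<|S(g_0)|$ forces the latter integral to be strictly positive), whereas you argue directly that the integral vanishing forces $(g_0)_\Delta=0$ on $(0,|S(g)^c|)$; these are contrapositives of one another.
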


\begin{proof}
  In order to derive a contradiction, let us assume that
  $|S(g)|<|S(g_0)|$. Hence,
  $\alpha\equiv\int_0^{|S(g)^c|}g_{0_{\Delta}}  \md x$ is positive.  Since
  $g\in\overline{\cal{R}}$, there exists
  $\left\{g_n\right\}\subseteq\cal{R}$ such that $g_n\rightharpoonup
  g$ in $L^2(D)$. Then, we have:
  \begin{equation}
    \label{eq23}
    \alpha=\int_0^{|S(g)^c|}g_{0_{\Delta}}
     \md x=\int_0^{|S(g)^c|}g_{n_{\Delta}}  \md x \le  \int_{S(g)^c} g_n \md x
     =
    \int_D g_n\chi_{S(g)^c} \md x\to \int_D g\chi_{S(g)^c} \md x=\int_{S(g)^c} g \md x=0,
  \end{equation}
  which is a contradiction. The inequality in (\ref{eq23}) is a
  consequence of Lemma~\ref{rearrange4}.
\end{proof}

We make use of the following lemmata from~\cite{Burton:1987} and~\cite{Burton:1989}.

\begin{lemma}
  \label{lem:rearrange5}
  The following characterization for the weak closure of $\cal{R}$
  holds:
  \begin{equation*}
   \overline{\cal R}
 =
 \left\{g\in L^1(D): \int_Dg \md x = \int_Dg_0 \md x \quad \text{and} \quad \forall s\in
   (0,|D|): \int_0^sg^\Delta  \md t\leq\int_0^sg_0^\Delta  \md t
 \right\} .
  \end{equation*}
\end{lemma}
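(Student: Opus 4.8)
Write $\mathcal{K}$ for the set on the right-hand side of the asserted identity. The plan is to establish the two inclusions $\overline{\cal{R}}\subseteq\mathcal{K}$ and $\mathcal{K}\subseteq\overline{\cal{R}}$ separately; the first is soft, and the second rests on a Hahn--Banach separation combined with two classical rearrangement estimates. First note that, just as in Lemma~\ref{lem:rearrange1}, the constraint $\int_0^sg^\Delta \md t\le\int_0^sg_0^\Delta \md t\le s\|g_0\|_\infty$ forces $\|g\|_\infty\le\|g_0\|_\infty$ for every $g\in\mathcal{K}$, so $\mathcal{K}\subseteq L^\infty(D)\subseteq L^2(D)$ and the whole argument takes place in the Hilbert space $L^2(D)$.

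For $\overline{\cal{R}}\subseteq\mathcal{K}$: any $g\in\cal{R}$ has the same distribution function as $g_0$, hence $g^\Delta=g_0^\Delta$ on $(0,|D|)$ and $\int_Dg \md x=\int_Dg_0 \md x$, so $\cal{R}\subseteq\mathcal{K}$. It remains to see that $\mathcal{K}$ is weakly closed in $L^2(D)$. The functional $g\mapsto\int_Dg \md x$ is weakly continuous, and for each $s\in(0,|D|)$ the identity $\int_0^sg^\Delta \md t=\sup\{\int_Eg \md x:|E|=s\}$ exhibits $g\mapsto\int_0^sg^\Delta \md t$ as a supremum of weakly continuous linear functionals, hence weakly lower semicontinuous; therefore $\mathcal{K}$, being the intersection of a weakly closed affine subspace with sublevel sets of weakly lower semicontinuous functionals, is weakly closed. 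Since $\overline{\cal{R}}$ is the smallest weakly closed set containing $\cal{R}$, we conclude $\overline{\cal{R}}\subseteq\mathcal{K}$.

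For $\mathcal{K}\subseteq\overline{\cal{R}}$, suppose for contradiction that some $g\in\mathcal{K}$ lies outside $\overline{\cal{R}}$. As recalled in the text, $\overline{\cal{R}}$ is convex and weakly (hence strongly) closed, so Hahn--Banach separation in $L^2(D)$ yields $\phi\in L^2(D)$ with
\begin{equation*}
\int_D\phi g \md x>\sup_{h\in\overline{\cal{R}}}\int_D\phi h \md x=\sup_{h\in\cal{R}}\int_D\phi h \md x,
\end{equation*}
the equality holding because $\cal{R}$ is weakly dense in $\overline{\cal{R}}$ and $h\mapsto\int_D\phi h \md x$ is weakly continuous. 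Now I invoke two standard facts of rearrangement theory. (i) By the Hardy--Littlewood inequality $\int_D\phi h \md x\le\int_0^{|D|}\phi^\Delta(s)h^\Delta(s) \md s=\int_0^{|D|}\phi^\Delta g_0^\Delta \md s$ for every $h\in\cal{R}$, and the supremum over $\cal{R}$ actually equals $\int_0^{|D|}\phi^\Delta g_0^\Delta \md s$, being attained by the rearrangement of $g_0$ similarly ordered to $\phi$. (ii) Since $g^\Delta$ and $g_0^\Delta$ have equal integral over $(0,|D|)$ while $\int_0^sg^\Delta \md t\le\int_0^sg_0^\Delta \md t$ for all $s$, and $\phi^\Delta$ is non-increasing, one has $\int_0^{|D|}\phi^\Delta g^\Delta \md s\le\int_0^{|D|}\phi^\Delta g_0^\Delta \md s$; this follows (in the spirit of Lemma~\ref{rearrange4}) by first checking it for non-increasing step functions via Abel summation, using the two listed properties, and then passing to the limit, since $g^\Delta$ and $g_0^\Delta$ are bounded. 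Combining the Hardy--Littlewood bound $\int_D\phi g \md x\le\int_0^{|D|}\phi^\Delta g^\Delta \md s$ with (ii) and then (i) gives $\int_D\phi g \md x\le\int_0^{|D|}\phi^\Delta g_0^\Delta \md s=\sup_{h\in\cal{R}}\int_D\phi h \md x$, contradicting the strict inequality above. Hence $\mathcal{K}\subseteq\overline{\cal{R}}$, and the two inclusions together prove the lemma.

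The weak lower semicontinuity and the separation step are routine; the genuine content, and where I would concentrate the effort, is fact (i) --- that $\sup_{h\in\cal{R}}\int_D\phi h \md x=\int_0^{|D|}\phi^\Delta g_0^\Delta \md s$. In full generality this requires producing a measure-preserving map realizing a rearrangement of $g_0$ that is co-monotone with $\phi$, which is delicate precisely when $\phi$ has level sets of positive measure (one circumvents this by an approximation of $\phi$, or by a direct layer-cake argument). This is the rearrangement-theoretic input drawn from Burton's work, and it is the main obstacle.
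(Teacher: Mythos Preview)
Your proposal goes well beyond what the paper does: the paper's ``proof'' of this lemma is simply a citation to Lemma~2.3 of Burton~\cite{Burton:1989}, with no argument given. What you have written is, in effect, a reconstruction of Burton's argument, and the outline is sound. The first inclusion via weak lower semicontinuity of $g\mapsto\int_0^sg^\Delta\md t=\sup_{|E|=s}\int_Eg\md x$ is the right observation; the second inclusion via Hahn--Banach separation, Hardy--Littlewood, and the Abel/majorization estimate is also correct in spirit, and you have correctly flagged fact~(i) (attainment of $\sup_{h\in\cal{R}}\int_D\phi h\md x$ when $\phi$ has flat level sets) as the one place where real work from Burton's theory is being imported.

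One small technical point worth tightening: the separating functional $\phi\in L^2(D)$ need not be non-negative, whereas in this paper $\phi^\Delta$ is defined only for non-negative functions (Definition~\ref{def:inrearrange}). This is harmless because every $g\in\mathcal{K}$ and every $h\in\cal{R}$ share the common integral $\int_Dg_0\md x$, so replacing $\phi$ by $\phi+c$ for a constant $c$ changes nothing in the separation inequality; one then truncates $\phi$ from below and shifts to reduce to the non-negative case. With that adjustment your argument is complete, and it supplies precisely the content the paper defers to the reference.
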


\begin{proof} See Lemma 2.3 in~\cite{Burton:1989}.
\end{proof}

In line with the established convention
of~\cite{Burton:1987,Burton:1989}, in what follows we often write
`increasing' instead of non-decreasing and `decreasing' instead of
non-increasing.

\begin{lemma}
  \label{lem:zeroflat}
  Suppose that $f:D\to [0,\infty)$ is measurable and has no
  significant flat sections on $D$. Then, there exists an increasing
  function $\psi$ such that $\psi(f)$ is a rearrangement of $g_0$.
  Moreover, there is a decreasing function $\tilde\psi$ such that
  $\tilde\psi(f)$ is a rearrangement of $g_0$.
\end{lemma}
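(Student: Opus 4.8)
The plan is to construct $\psi$ explicitly as a suitable composition of the decreasing rearrangement of $g_0$ with the distribution function of $f$, exploiting the hypothesis that $f$ has no significant flat sections. First I would introduce the "layer-cake" correspondence: since $|\{x \in D : f(x) = c\}| = 0$ for every $c$, the distribution function $\lambda_f$ is strictly decreasing on the range of relevant thresholds and its generalized inverse is well behaved; equivalently, the decreasing rearrangement $f^\Delta : (0,|D|) \to [0,\infty)$ is (essentially) strictly decreasing, so that the map $x \mapsto \lambda_f(f(x))$ pushes the Lebesgue measure on $D$ forward onto the Lebesgue measure on $(0,|D|)$ in a measure-preserving way. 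The key fact is that, for any measurable $\phi : (0,|D|) \to [0,\infty)$, the function $\phi \circ \lambda_f$ is a rearrangement of $\phi$ (viewed on the interval $(0,|D|)$ with Lebesgue measure), because level sets correspond: $\{x : \phi(\lambda_f(f(x))) \ge \alpha\}$ has the same measure as $\{s : \phi(s) \ge \alpha\}$.

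With this in hand, the second step is to take $\phi = g_0^\Delta$, the decreasing rearrangement of $g_0$ on $(0,|D|)$. Since $g_0^\Delta$ is decreasing and $\lambda_f$ is decreasing, the composition $\psi := g_0^\Delta \circ \lambda_f$ is increasing as a function of $f$ — i.e.\ there is an increasing function $\psi$ on $[0,\infty)$ (namely $t \mapsto g_0^\Delta(\lambda_f(t))$) with $\psi(f)$ a rearrangement of $g_0^\Delta$, hence of $g_0$ (because $g_0^\Delta$ is itself a rearrangement of $g_0$ by Definition~\ref{def:inrearrange} and the transitivity of the rearrangement relation, which follows immediately from Definition~\ref{def:rearrange} since all three functions share the same distribution function). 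For the second assertion, I would instead take $\phi = g_{0_\Delta}$, the increasing rearrangement of $g_0$; then $\tilde\psi := g_{0_\Delta} \circ \lambda_f$ is a decreasing function of $f$ (decreasing composed with decreasing would give increasing, so to get a decreasing outcome I compose the increasing rearrangement $g_{0_\Delta}$ with the decreasing $\lambda_f$), and $\tilde\psi(f)$ is a rearrangement of $g_{0_\Delta}$, hence of $g_0$.

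The main obstacle is the careful handling of measurability and the "essential uniqueness" caveats: I must verify that $\lambda_f$ indeed transports Lebesgue measure on $D$ to Lebesgue measure on $(0,|D|)$ — this is exactly where "no significant flat sections" is used, ruling out atoms in the pushforward — and that composition with a monotone (possibly discontinuous) rearrangement still yields a function whose level sets have the correct measures. Concretely, the computation to nail down is
\begin{equation*}
  |\{x \in D : \psi(f(x)) \ge \alpha\}| = |\{s \in (0,|D|) : g_0^\Delta(s) \ge \alpha\}| = \lambda_{g_0}(\alpha),
\end{equation*}
the first equality by the measure-preserving property of $x \mapsto \lambda_f(f(x))$ and the second being a standard identity for decreasing rearrangements. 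I expect the proof to be short once this transport lemma is made precise; indeed the excerpt itself flags this result as "easy to prove," so I would keep the argument terse, cite the measure-preserving change of variables for monotone rearrangements, and simply record the two choices $\phi = g_0^\Delta$ and $\phi = g_{0_\Delta}$ that produce the increasing and decreasing versions respectively.
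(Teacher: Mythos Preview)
The paper does not actually prove this lemma: its ``proof'' is a one-line citation of Lemma~2.9 in Burton~\cite{Burton:1989}. Your proposal supplies the explicit construction that underlies Burton's result---set $\psi(t)=g_0^\Delta(\lambda_f(t))$ and $\tilde\psi(t)=g_{0_\Delta}(\lambda_f(t))$, and verify that $x\mapsto\lambda_f(f(x))$ pushes Lebesgue measure on $D$ forward to Lebesgue measure on $(0,|D|)$---and this is correct and is the standard argument. One small sharpening: the hypothesis ``no significant flat sections'' makes $\lambda_f$ \emph{continuous} (equivalently, $f^\Delta$ essentially strictly decreasing); $\lambda_f$ itself need not be strictly decreasing, since it can be constant on intervals that $f$ skips, but continuity is precisely what the probability-integral-transform step $|\{x:\lambda_f(f(x))\le s\}|=s$ requires, so your argument goes through. (Incidentally, the phrase ``easy to prove'' in the paper refers to the preceding Lemma~\ref{rearrange4}, not to this one.)
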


\begin{proof}
  See Lemma 2.9 in~\cite{Burton:1989}.
\end{proof}

\begin{lemma}
  \label{lem:uniquemaximizer}
  Let $f\in L^2(D)$ be a non-negative and non-trivial function ({\ie},
  it is not identically zero), and assume that there is an increasing
  function $\psi$ such that $\psi(f)\in \cal{R}$. Then $\psi(f)$ is
  the unique maximizer of the linear functional $L(h) \defeq
  \int_D fh \, \md x$ relative to $h\in\overline{\cal{R}}.$
\end{lemma}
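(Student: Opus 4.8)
The plan is to exploit the characterization of $\overline{\cal R}$ given in Lemma~\ref{lem:rearrange5} together with a classical rearrangement inequality (the Hardy--Littlewood inequality), reformulated via integration by parts so that the ordering constraint $\int_0^s g^\Delta \le \int_0^s g_0^\Delta$ is exactly what drives the estimate. First I would recall the Hardy--Littlewood inequality in the form
\begin{equation*}
  \int_D f h \,\md x \leq \int_0^{|D|} f^\Delta(s)\, h^\Delta(s)\, \md s
  \qquad (h \in \overline{\cal R}),
\end{equation*}
so it suffices to bound the one-dimensional integral on the right. Setting $G(s) \defeq \int_0^s h^\Delta(t)\,\md t$ and $G_0(s) \defeq \int_0^s g_0^\Delta(t)\,\md t$, Lemma~\ref{lem:rearrange5} gives $G(s) \leq G_0(s)$ for all $s\in(0,|D|)$ and $G(|D|) = G_0(|D|)$. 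Integrating by parts, and using that $f^\Delta$ is nonincreasing (hence can be written as $f^\Delta(s) = \int_s^{|D|} \md(-f^\Delta)$ plus a constant, i.e. $-\md f^\Delta$ is a nonnegative measure),
\begin{equation*}
  \int_0^{|D|} f^\Delta \, h^\Delta \,\md s
  = f^\Delta(|D|^-)\,G(|D|) + \int_0^{|D|} G(s)\,\md(-f^\Delta(s))
  \leq f^\Delta(|D|^-)\,G_0(|D|) + \int_0^{|D|} G_0(s)\,\md(-f^\Delta(s)),
\end{equation*}
and reversing the integration by parts the right-hand side equals $\int_0^{|D|} f^\Delta g_0^\Delta \,\md s$. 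Finally, since $\psi$ is increasing, $\psi(f)$ and $f$ are similarly ordered, so the Hardy--Littlewood inequality is an equality for $h = \psi(f)$: indeed $(\psi(f))^\Delta = \psi^\Delta(f^\Delta)$-type reasoning, or more directly, $\int_D f\,\psi(f)\,\md x = \int_0^{|D|} f^\Delta(s)\,(\psi(f))^\Delta(s)\,\md s = \int_0^{|D|} f^\Delta(s)\, g_0^\Delta(s)\,\md s$ because $\psi(f)$ is a rearrangement of $g_0$ and is a nondecreasing function of $f$. This shows $L(\psi(f)) = \int_0^{|D|} f^\Delta g_0^\Delta\,\md s \geq L(h)$ for every $h \in \overline{\cal R}$, so $\psi(f)$ is a maximizer.

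For uniqueness, suppose $h \in \overline{\cal R}$ also attains the maximum. Then both inequalities above must be equalities. Equality in the integration-by-parts estimate forces $G(s) = G_0(s)$ at every point $s$ in the support of the measure $\md(-f^\Delta)$; equality in Hardy--Littlewood forces $h$ to be a nondecreasing function of $f$ wherever $f$ is not constant. Combining these, on each level set of $f$ the function $h$ must agree with $\psi(f)$ a.e.: where $f^\Delta$ is strictly decreasing the constraint $G \equiv G_0$ pins down $h^\Delta = g_0^\Delta$ pointwise, and the monotone-dependence on $f$ then transports this back to $D$; where $f^\Delta$ is locally constant, $f$ takes a value on a positive-measure set, and on that set the only way to keep $\int_0^{|D|} f^\Delta h^\Delta$ maximal while respecting the total-mass and ordering constraints is again to have $h$ coincide with $\psi(f)$ there. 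I would spell this out by noting that $h$ and $\psi(f)$ have the same decreasing rearrangement (both equal $g_0^\Delta$, since $h \in \overline{\cal R}$ attaining the max forces $h \in \cal R$, i.e. $h^\Delta = g_0^\Delta$), so it is enough to show $h = \psi(f)$ as functions on $D$, which follows from both being the same nondecreasing rearrangement of $g_0$ "along $f$".

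The main obstacle I anticipate is the uniqueness half, specifically handling the level sets where $f$ is constant on a set of positive measure: there $\psi(f)$ is not literally determined by the value of $f$ alone, so I cannot simply say "$h$ is a function of $f$ equal to $\psi(f)$." The resolution is that non-triviality of $f$ is used only to guarantee $f^\Delta$ is not identically constant, so that the equality condition $G = G_0$ bites on a set of positive $\md(-f^\Delta)$-measure; and on the flat parts of $f$ one argues that any admissible $h$ restricted there is itself a rearrangement (of $g_0$ restricted to the corresponding interval) with prescribed total integral equal to the maximal possible value, which forces it to equal $g_0^\Delta$ on the corresponding sub-interval, hence equal to $\psi(f)$ a.e. after undoing the rearrangement. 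The bookkeeping that these local agreements patch together into a global a.e.\ equality $h = \psi(f)$ on $D$ is the delicate part; everything else is a direct application of Lemma~\ref{lem:rearrange5} and the Hardy--Littlewood inequality.
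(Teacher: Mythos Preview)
The paper does not prove this lemma; it simply cites Lemma~2.4 of Burton~\cite{Burton:1989}. So there is nothing in the paper itself to compare your argument against --- you are attempting to supply what the paper outsources.

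Your proof that $\psi(f)$ is \emph{a} maximizer is correct and is the standard route: Hardy--Littlewood gives $L(h)\le\int_0^{|D|}f^\Delta h^\Delta\,\md s$, the integration-by-parts estimate against the characterization in Lemma~\ref{lem:rearrange5} pushes this to $\int_0^{|D|}f^\Delta g_0^\Delta\,\md s$, and equality holds for $h=\psi(f)$ because $(\psi(f))^\Delta=g_0^\Delta$ and $\psi(f)$ is an increasing function of $f$.

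The uniqueness half has a genuine gap, which you correctly locate but do not close. You need $G\equiv G_0$ on all of $(0,|D|)$ to conclude $h^\Delta=g_0^\Delta$, but equality in your integration-by-parts step only yields $G=G_0$ on the support of $\md(-f^\Delta)$; on the flat stretches of $f^\Delta$ it gives no information. The missing observation is this: on any maximal interval $(a,b)$ where $f^\Delta$ is constant, the hypothesis $\psi(f)\in\cal R$ forces $g_0^\Delta=\psi\circ f^\Delta$ to be constant there too, so $G_0$ is \emph{affine} on $(a,b)$; meanwhile $G$ is \emph{concave} everywhere (its derivative $h^\Delta$ is nonincreasing); and $G=G_0$ at the endpoints $a,b$ by continuity. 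A concave function matching an affine one at both endpoints lies above it, so together with $G\le G_0$ one gets $G\equiv G_0$ on $(a,b)$, hence everywhere, hence $h\in\cal R$. The remaining step --- that within $\cal R$ the unique maximizer of $L$ is $\psi(f)$ --- is the equality case of Hardy--Littlewood and is itself a separate lemma in Burton's framework; your phrase ``both being the same nondecreasing rearrangement of $g_0$ along $f$'' is a gesture rather than an argument, though this part is more routine once $h\in\cal R$ is secured.
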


\begin{proof}
  See Lemma 2.4 in~\cite{Burton:1989}.
\end{proof}

We will also need the following rearrangement result for the Dirichlet
integral (see, {\eg},~\cite{BROTHERS_ZIEMER:1988}). Note that
here $v^*$ denotes the Schwarz symmetrization of $v$ (see,
{\eg},~\cite{Kawohl:1985}):

\begin{lemma}{\ }
  \label{lem:Dirichlet}
  \begin{enumerate}[(i)]
  \item{\label{item:dirich_integ_nabla_vstar_lt_v}} If $v\in
    H^1_0(\RR^N)$ is non-negative, then, $v^*\in H^1_0(\RR^N)$, and the
    following inequality holds:
    \begin{equation}
      \label{eq:PZ}
      \int_{\RR^N}|\nabla v^*|^2\; \md x\leq\int_{\RR^N}|\nabla v|^2\; \md x.
    \end{equation}

  \item{\label{item:dirich_v_trans_vStar}} If $v\in H^1_0(\RR^N)$ is
    non-negative and equality holds in (\ref{eq:PZ}), then, for every
    $0\leq\alpha<M:=\text{ess~sup}~v$, $v^{-1}(\alpha,\infty)$ is a
    translate of the disk ${v^*}^{-1}(\alpha,\infty)$, almost
    everywhere. If, in addition, $\{x\in\RR^N:\;\nabla v=0,\;0<v(x)<M\}$
    has zero measure, then $v$ is a translate of $v^*$.
  \end{enumerate}
\end{lemma}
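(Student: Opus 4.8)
The statement is the classical P\'olya--Szeg\H{o} inequality together with the equality analysis of Brothers and Ziemer, so the plan is to sketch the standard route for part~(i) and to indicate why part~(ii) is the substantive content; for the paper itself it suffices to quote~\cite{BROTHERS_ZIEMER:1988} (and~\cite{Kawohl:1985} for the elementary properties of Schwarz symmetrization), and I would not reproduce the full rigidity argument.

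For part~(i), first reduce to a non-negative Lipschitz $v$ with compact support, using density of such functions in $H^1_0(\RR^N)$, lower semicontinuity of the Dirichlet integral under weak $H^1$-convergence, and the fact that $v\mapsto v^*$ is non-expansive on $L^2$. For such $v$, set $\mu(t)\defeq|\{v>t\}|$. The coarea formula gives
\[
  \int_{\RR^N}|\nabla v|^2\md x=\int_0^\infty\left(\int_{\{v=t\}}|\nabla v|\md\mathcal{H}^{N-1}\right)\md t,\qquad -\mu'(t)=\int_{\{v=t\}}\frac{\md\mathcal{H}^{N-1}}{|\nabla v|},
\]
the second identity being the place where the critical set of $v$ must be handled with care. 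Cauchy--Schwarz on $\{v=t\}$ yields $\mathcal{H}^{N-1}(\{v=t\})^2\le\big(\int_{\{v=t\}}|\nabla v|\md\mathcal{H}^{N-1}\big)\,(-\mu'(t))$, i.e. $\int_{\{v=t\}}|\nabla v|\md\mathcal{H}^{N-1}\ge P(\{v>t\})^2/(-\mu'(t))$, with equality precisely when $|\nabla v|$ is constant on the level set — which is the case for $v^*$, whose super-level sets are balls. Since $\{v^*>t\}$ is the ball of volume $\mu(t)$, the isoperimetric inequality gives $P(\{v>t\})\ge P(\{v^*>t\})$; combining these three facts and integrating in $t$ yields~(\ref{eq:PZ}).

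For part~(ii), equality in~(\ref{eq:PZ}) forces, for a.e.\ $t$, equality in the isoperimetric inequality applied to the set of finite perimeter $\{v>t\}$; by the rigidity of the isoperimetric inequality this means $\{v>\alpha\}$ coincides, up to a Lebesgue-null set, with a ball whose radius is pinned down by $\mu(\alpha)$, and matching volumes identifies that radius with that of ${v^*}^{-1}(\alpha,\infty)$ — this is the first assertion. The second assertion, that $v$ is a genuine translate of $v^*$ once $\{\nabla v=0,\ 0<v<M\}$ is null, amounts to showing the centres of these balls can be taken independent of $\alpha$: the hypothesis excludes plateaux on which $v$ is constant on a positive-measure set, and the remaining argument shows that off the critical set $v$ is locally a decreasing function of the distance to a fixed point.

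The main obstacle is precisely part~(ii): a priori $\mu$ need not be absolutely continuous and the super-level sets are only sets of finite perimeter, so promoting ``every level set is a translated ball'' to ``all level sets are concentric'' requires the fine-structure theory of sets of finite perimeter and the delicate absolute-continuity estimates of~\cite{BROTHERS_ZIEMER:1988}. For this reason we invoke that reference rather than reprove the result here.
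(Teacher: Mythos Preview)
Your proposal is correct and in fact more detailed than the paper's own treatment: the paper does not prove this lemma at all but simply quotes it from~\cite{BROTHERS_ZIEMER:1988} (with~\cite{Kawohl:1985} for background on Schwarz symmetrization), exactly as you suggest doing in your final paragraph. Your sketch of the coarea/Cauchy--Schwarz/isoperimetric route for part~(i) and your identification of the Brothers--Ziemer rigidity analysis as the substantive content of part~(ii) are accurate, but the paper itself offers none of this and treats the lemma purely as a cited preliminary.
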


\section{Optimal solutions}
\label{sec:exist-uniq-optim}

We need to make certain assumptions on the force function $f$ in order
to be able to obtain our main results. Henceforth, $v_f\in H_0^1(D)$
will denote the unique solution of the Poisson boundary value problem:
\begin{equation}
  \label{eq:Poisson_A1}
  \left\{
    \arrayoptions{0.3ex}{1.2}
    \begin{array}{rcll}
      -\Delta v_f & = & f &\quad \text{in }D,\\
      v_f & = & 0 &\quad \text{on } \partial D.
    \end{array}
  \right.
\end{equation}
Here is the main assumption on which our results will hinge:

\begin{center}
  \Ovalbox{\parbox[t]{0.6\textwidth}{
      \begin{center}
        {\bf A1:} \ $v_f\le f$, in $D$.
      \end{center}
}}
\end{center}

A minor problem with this assumption is that its statement involves
the solution to the Poisson boundary value problem
(\ref{eq:Poisson_A1}). It turns out that we can also work with the
following assumption, whose statement involves just the function $f$
and its Laplacian:
\begin{center}
      \begin{center}
        {\bf A2:} \ $f\le-\Delta f$, in $D$.
      \end{center}
\end{center}

\begin{proposition}
\label{prop:A2_implies_A1}
{\bf A2} implies {\bf A1}.
\end{proposition}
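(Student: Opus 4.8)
The plan is to establish A1 from A2 via the maximum principle (comparison principle) for the operator $-\Delta$ on $D$. Set $w \defeq f - v_f$. I want to show $w \geq 0$ in $D$. Since $v_f$ solves $-\Delta v_f = f$ with $v_f = 0$ on $\partial D$, we have $-\Delta w = -\Delta f - (-\Delta v_f) = -\Delta f - f$. By assumption A2, $f \le -\Delta f$, i.e. $-\Delta f - f \ge 0$, so $-\Delta w \ge 0$ in $D$; that is, $w$ is a supersolution of the Laplace equation. On the boundary, $w = f - v_f = f - 0 = f \ge 0$, since $f$ is assumed non-negative. By the weak maximum principle for superharmonic functions (applied to $w$, which lies in an appropriate Sobolev space by elliptic regularity, given the smoothness of $D$ and the regularity implicitly needed for A2 to make sense), we conclude $w \ge 0$ throughout $D$, which is precisely A1: $v_f \le f$ in $D$.

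The key steps, in order, are: (i) introduce the difference $w = f - v_f$; (ii) compute $-\Delta w = -\Delta f - f$ and invoke A2 to get $-\Delta w \ge 0$; (iii) check the boundary condition $w|_{\partial D} = f \ge 0$ using non-negativity of $f$ and the homogeneous boundary data of $v_f$; (iv) apply the maximum principle to conclude $w \ge 0$.

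The main subtlety — rather than a deep obstacle — is regularity: the statement A2 involves $\Delta f$, so one must tacitly assume $f$ is regular enough (say $f \in H^1(D)$ or $f \in C^2(D) \cap C(\overline{D})$, or at least that $\Delta f$ makes sense as a distribution and the inequality $f \le -\Delta f$ holds in a suitable weak sense) for the argument to be rigorous. Under such a reading, the comparison argument goes through verbatim in the weak formulation: testing $-\Delta w \ge 0$ against $w^- \in H_0^1(D)$ (the negative part, which vanishes on $\partial D$ since $w \ge 0$ there) yields $\int_D |\nabla w^-|^2 \, \md x \le 0$, forcing $w^- \equiv 0$. I would present the weak-formulation version to match the functional-analytic setting of the paper, noting that no maximum principle beyond this one-line energy estimate is actually needed.
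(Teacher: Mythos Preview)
Your proof is correct and follows essentially the same route as the paper: form the difference between $f$ and $v_f$, use A2 to determine the sign of its Laplacian, check the boundary sign from $f\ge 0$ and $v_f|_{\partial D}=0$, and conclude by the maximum principle. The paper works with $v_f-f$ rather than your $w=f-v_f$ (a cosmetic sign flip) and invokes the maximum principle directly without spelling out the weak-formulation test against $w^-$, but the argument is the same.
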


\begin{proof}
  Notice that we have:

\begin{equation}
  \label{eq:bvp_A2_implies_A1}
  \left\{
    \arrayoptions{1ex}{1.2}
    \begin{array}{ll}
      -\Delta (v_f-f)=f+\Delta f &\text{in}\;D \\
      v_f-f\le0 &\text{on}\;\partial
      D.
    \end{array}
  \right.
\end{equation}
Since $f+\Delta f$ is non-positive, we can apply the maximum principle
to (\ref{eq:bvp_A2_implies_A1}) to deduce $v_f\le f$.
\end{proof}

As a consequence, \emph{all of the results that will be proved based
  on {\bf A1} will also hold for assumption {\bf A2}}.

\begin{remark}
  Our assumptions are valid, in the sense that there are non-negative
  functions satisfying {\bf A2}, and by implication {\bf A1}. Indeed,
  consider the boundary value problem
  \begin{equation}
    \label{bvp6}
    \left\{
      \arrayoptions{0.3ex}{1.2}
      \begin{array}{rcll}
        -\Delta u-u & = & N & \quad \text{in }D, \\
        u & = & 0 & \quad \text{on } \partial D,
      \end{array}
    \right.
  \end{equation}
  in which $N\in[0,\infty)$. The energy functional associated with
  (\ref{bvp6}) is:
  \begin{equation*}
    I(u)=\frac{1}{2}\int_D|\nabla
    u|^2 \md x-\frac{1}{2}\int_D u^2 \md x-\int_DNu \md x.
  \end{equation*}
  It is clear from the Poincar$\acute{\text{e}}$ inequality that, if
  $D$ is thin, then $I(u)$ will be coercive. So, by an application of
  the direct method of calculus of variations to the functional
  $I(u)$, we infer the existence of a critical point which is a
  solution of (\ref{bvp6}). In order to show that (\ref{bvp6}) has a
  non-negative solution, it suffices to point out that $I(|u|)\le
  I(u)$.
\end{remark}

\subsection{Existence, uniqueness, and optimality condition}

Our assumptions guarantee that the solution $u_g$ of the boundary
value problems (\ref{eq:bvp_main}) has no significant flat sections on
$S(g)$, a fact which will be used in the proof of our main result:

\begin{lemma}
  \label{lem:estimate}
  Suppose that $f$ satisfies assumption {\bf A1}, and $g$ is a
  measurable function such that $0\le g\le 1$. Then, $u_g$ has no
  significant flat sections on $S(g)$.
\end{lemma}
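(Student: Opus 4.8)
The goal is to show that for every constant $c \geq 0$, the set $\{x \in S(g) : u_g(x) = c\}$ has measure zero. The natural approach is to argue by contradiction: suppose there is a constant $c \geq 0$ and a set $A \subseteq S(g)$ of positive measure on which $u_g \equiv c$. On such a level set, $\nabla u_g = 0$ almost everywhere (a standard fact for Sobolev functions — $\nabla w = 0$ a.e. on $\{w = c\}$). Plugging this into the PDE $-\Delta u_g + g u_g = f$ in the distributional/a.e. sense, on $A$ we would get, roughly, $g \cdot c = f$ a.e. on $A$. I would want to combine this with assumption \textbf{A1}, i.e.\ $v_f \leq f$, together with the relation between $u_g$ and $v_f$, to force a contradiction.

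First I would make the comparison between $u_g$ and $v_f$ precise. Since $0 \le g \le 1$ and $u_g > 0$ (maximum principle), we have $-\Delta u_g = f - g u_g \le f = -\Delta v_f$, so by the maximum principle $u_g \le v_f$, hence $u_g \le v_f \le f$ in $D$ by \textbf{A1}. On the flat set $A$ where $u_g = c$: if $c = 0$ this contradicts $A \subseteq S(g)$ combined with $u_g > 0$ in $D$ (so that case is immediate), so assume $c > 0$. On $A$ we have $-\Delta u_g = 0$ a.e. (since $u_g$ is constant there, $D^2 u_g = 0$ a.e. on $A$, using $u_g \in H^2_{\mathrm{loc}}$ by elliptic regularity — $f \in L^2$, $g u_g \in L^2$), so the equation gives $f = g u_g = g c \le c$ a.e. on $A$. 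But we also need a lower bound: I would like to show $u_g < f$ strictly on a positive-measure portion of $A$, or extract a sign contradiction from the boundary behaviour.

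The cleanest route is probably: on $A$, $u_g = c$ and $\nabla u_g = 0$ a.e., so $f = gc$ a.e.\ on $A$; meanwhile, consider the auxiliary function $w := f - u_g \ge 0$, which satisfies $-\Delta w = -\Delta f - f + g u_g$. Using \textbf{A1} in the form $v_f \le f$ is what gives $w \ge 0$; I would examine where $w$ can vanish. On $A$, $w = f - c = gc - c = (g-1)c \le 0$, combined with $w \ge 0$ forces $w = 0$ and hence $g = 1$ a.e.\ on $A$ (since $c > 0$). So $u_g = f$ a.e.\ on the positive-measure set $A$, meaning the nonnegative function $w = f - u_g$ attains its minimum value $0$ on a set of positive measure in the interior; then applying the strong maximum principle to the equation satisfied by $w = v_f - u_g$ (note $-\Delta(v_f - u_g) = g u_g \ge 0$, so $v_f - u_g$ is superharmonic and nonnegative, vanishing on a positive-measure interior set) forces $v_f - u_g \equiv 0$ on $D$, whence $g u_g \equiv 0$, whence $g \equiv 0$ on $S(g)$ — contradiction. (One must also handle the $c=0$ case and check the strong maximum principle applies, i.e.\ $D$ connected; if not, argue componentwise.)

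The main obstacle I anticipate is making rigorous the step "$\nabla u_g = 0$ and $\Delta u_g = 0$ a.e.\ on the flat set." The gradient statement is standard, but concluding $\Delta u_g = 0$ a.e.\ there requires $u_g \in H^2_{\mathrm{loc}}(D)$ and the fact that for $H^2$ functions the full Hessian vanishes a.e.\ on a level set; this needs $f \in L^2$ and interior elliptic regularity, which hold here. A secondary technical point is invoking the strong maximum principle in a form valid for $H^1 \cap H^2_{\mathrm{loc}}$ solutions of $-\Delta w + \text{(bounded)}\, w \ge 0$ that vanish on a set of positive measure — this follows from the strong unique continuation / Harnack inequality for such equations. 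If connectedness of $D$ is not assumed, the argument should be run on each connected component of $D$ meeting $A$, and one uses that $S(g)$ has positive measure in at least one component.
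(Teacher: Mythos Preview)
Your argument is correct and rests on the same two ingredients as the paper's: the comparison $u_g\le v_f$ obtained from the maximum principle, and the identity $f=gu_g$ a.e.\ on the flat set (via $u_g\in H^2_{\mathrm{loc}}$ and the Gilbarg--Trudinger lemma that the Hessian vanishes a.e.\ on a level set).

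The only difference is one of economy. The paper applies the \emph{strong} maximum principle at the outset to the equation
\[
-\Delta(u_g-v_f)+g(u_g-v_f)=-gv_f\le 0,\qquad u_g-v_f=0\ \text{on}\ \partial D,
\]
to obtain the strict inequality $u_g<v_f$ in $D$. Once that is in hand, the contradiction on the flat set $L\subseteq S(g)$ is a single line:
\[
f=gu_g<gv_f\le v_f\le f,
\]
using $g>0$ on $L$, $g\le 1$, and {\bf A1}. Your version postpones the strong maximum principle: from the weak inequality $u_g\le v_f\le f$ you first squeeze out $g=1$ and $u_g=v_f$ on $A$, and only then invoke the strong maximum principle on the nonnegative superharmonic function $v_f-u_g$ to propagate equality to all of $D$ and reach a contradiction. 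Both routes work; the paper's ordering simply avoids the detour through $g=1$ and the second maximum-principle step.
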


\begin{proof}
  From the boundary value problems (\ref{eq:bvp_main}) and (\ref{eq:Poisson_A1}), we
  deduce:
  \begin{equation*}
    \left\{
      \arrayoptions{1ex}{1.2}
      \begin{array}{ll}
        -\Delta (u_g-v_f)+g(u_g-v_f)=-gv_f, &\text{in } D, \\
        u_g-v_f=0, &\text{on }\partial D.
      \end{array}
    \right.
  \end{equation*}
  Since $g$ and $v_f$ are non-negative, $u_g<v_f$ in $D$ by the strong
  maximum principle.

  In order to derive a contradiction, we assume that there exists an
  $L\subseteq S(g)$ such that the measure of $L$ is positive, and
  $u_g$ is constant on $L$. By applying Lemma 7.7
  in~\cite{GilbargTrudinger:EllipticPDE_SecOrd_Book:2001}, we
  infer $f=gu_g$ in $L$. Hence:
\begin{equation*}
  f=gu_g<gv_f\le v_f \le f, \text{ in } L,
\end{equation*}
which is a contradiction.
\end{proof}

Next, we turn to the energy functional. In order to prove the
existence and uniqueness of solutions of the minimization problem
(\ref{eq:main_problem}), we need the following basic result regarding
the energy functional $\Phi$:
\begin{lemma}
  \label{lem:basic_props_of_energy_func} The energy functional $\Phi$ satisfies the
  following:
  \begin{enumerate}[(i)]
  \item{\label{item:weakly_continuous}} $\Phi$ is weakly continuous on $\overline{\cal{R}}$ with
    respect to the $L^2-$topology.
  \item{\label{item:stricyly_convex}} $\Phi$ is strictly convex on $\overline{\cal{R}}$.
  \item{\label{item:Phi_deriv}} Given $g$ and $h$ in
    $\overline{\cal{R}}$, the following formula holds:
    \begin{equation}
      \label{eq:Phi_deriv}
      \lim_{t\to
        0^+}\frac{\Phi(\xi_t)-\Phi(g)}{t}=-\int_D(h-g)u^2 \md x,\quad
      0<t<1,
    \end{equation}
    in which $\xi_t=g+t(h-g)$, and $u=u_g$.
  \end{enumerate}
\end{lemma}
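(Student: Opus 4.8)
The plan is to establish the three parts of Lemma~\ref{lem:basic_props_of_energy_func} by exploiting the variational characterization~(\ref{eq:variational_formulation}) of $\Phi$, which expresses $\Phi$ as a pointwise supremum of functionals that are affine in $g$. This representation is the workhorse for all three parts, so I would recall it at the outset.

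\emph{Part (i), weak continuity.} First I would note that each map $g \mapsto 2\int_D fv\,\md x - \int_D(|\nabla v|^2 + gv^2)\,\md x$ (for fixed $v\in H^1_0(D)$) is weakly continuous on $L^2(D)$, since the only $g$-dependent term is linear in $g$ and $v^2\in L^1(D)\subseteq$ the relevant dual space once we note that $\overline{\cal R}\subseteq L^\infty(D)$ by Lemma~\ref{lem:rearrange1} (so $g v^2 \in L^1$ and the pairing $\int_D g v^2\,\md x$ makes sense and is weakly continuous in $g$). Hence $\Phi$, being a supremum of weakly continuous (in particular weakly lower semicontinuous) functionals, is weakly lower semicontinuous on $\overline{\cal R}$. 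For the reverse inequality I would take $g_n \rightharpoonup g$ in $L^2(D)$ with $\{g_n\}\subseteq\overline{\cal R}$, use the uniform $L^\infty$ bound from Lemma~\ref{lem:rearrange1} together with the equation~(\ref{eq:bvp_main}) and a standard energy estimate to get a uniform $H^1_0(D)$ bound on $u_{g_n}$, extract a subsequence with $u_{g_n}\rightharpoonup w$ in $H^1_0(D)$ and $u_{g_n}\to w$ strongly in $L^2(D)$ (Rellich), and then pass to the limit in the weak formulation $\int_D \nabla u_{g_n}\cdot\nabla\varphi + \int_D g_n u_{g_n}\varphi = \int_D f\varphi$ — the product term $g_n u_{g_n}\rightharpoonup g w$ handled by weak--strong pairing — to identify $w = u_g$. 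Then $\Phi(g_n) = \int_D f u_{g_n}\,\md x \to \int_D f u_g\,\md x = \Phi(g)$, and a subsequence argument upgrades this to convergence of the whole sequence.

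\emph{Part (ii), strict convexity.} Convexity is immediate from~(\ref{eq:variational_formulation}): a supremum of functionals that are affine in $g$ is convex in $g$. For strictness, suppose $g_0, g_1 \in \overline{\cal R}$ with $g_0 \ne g_1$, set $g_\theta = (1-\theta)g_0 + \theta g_1$, and suppose $\Phi(g_\theta) = (1-\theta)\Phi(g_0) + \theta\Phi(g_1)$ for some $\theta\in(0,1)$. Using $v = u_{g_\theta}$ as a (not necessarily optimal) test function for $\Phi(g_0)$ and $\Phi(g_1)$ in~(\ref{eq:variational_formulation}), one gets $\Phi(g_i) \geq 2\int_D f u_{g_\theta} - \int_D |\nabla u_{g_\theta}|^2 - \int_D g_i u_{g_\theta}^2$; taking the convex combination and comparing with $\Phi(g_\theta) = 2\int_D f u_{g_\theta} - \int_D|\nabla u_{g_\theta}|^2 - \int_D g_\theta u_{g_\theta}^2$ forces equality in both inequalities, i.e. $u_{g_\theta}$ is the optimal test function for both $\Phi(g_0)$ and $\Phi(g_1)$, hence $u_{g_\theta} = u_{g_0} = u_{g_1}$. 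But then $u := u_{g_0}$ solves $-\Delta u + g_0 u = f$ and $-\Delta u + g_1 u = f$, so $(g_0 - g_1)u = 0$ a.e. Since $f$ is non-negative and non-trivial, the strong maximum principle gives $u > 0$ in $D$, whence $g_0 = g_1$ a.e., a contradiction.

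\emph{Part (iii), the directional derivative.} With $\xi_t = g + t(h-g)$ and $u = u_g$, I would bound the difference quotient from two sides. For the upper bound: $\Phi(\xi_t) \geq 2\int_D f u - \int_D|\nabla u|^2 - \int_D \xi_t u^2 = \Phi(g) - t\int_D(h-g)u^2\,\md x$ using $v = u$ as test function in~(\ref{eq:variational_formulation}) for $\xi_t$ and the identity $\Phi(g) = 2\int_D f u - \int_D|\nabla u|^2 - \int_D g u^2$; so $\frac{\Phi(\xi_t)-\Phi(g)}{t} \geq -\int_D(h-g)u^2\,\md x$. For the lower bound: using $v = u_{\xi_t}$ as a test function in~(\ref{eq:variational_formulation}) for $\Phi(g)$ gives $\Phi(g)\geq 2\int_D f u_{\xi_t} - \int_D|\nabla u_{\xi_t}|^2 - \int_D g u_{\xi_t}^2 = \Phi(\xi_t) + \int_D(\xi_t - g)u_{\xi_t}^2 = \Phi(\xi_t) + t\int_D(h-g)u_{\xi_t}^2\,\md x$, so $\frac{\Phi(\xi_t)-\Phi(g)}{t}\leq -\int_D(h-g)u_{\xi_t}^2\,\md x$. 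Then I would let $t\to 0^+$: since $\xi_t \to g$ strongly in $L^2$ (indeed in every $L^p$), the argument from Part (i) gives $u_{\xi_t}\to u_g$ strongly in $L^2(D)$, and with the uniform $L^\infty$ control on $u_{\xi_t}$ (from the uniform $L^\infty$ bound on $\xi_t$ and elliptic estimates), dominated convergence yields $\int_D(h-g)u_{\xi_t}^2\,\md x \to \int_D(h-g)u^2\,\md x$. Squeezing gives~(\ref{eq:Phi_deriv}).

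The main obstacle I anticipate is the convergence $u_{g_n}\to u_g$ used repeatedly: one must be careful that the product term $g_n u_{g_n}$ passes to the limit correctly (weak $\times$ strong), that the uniform $L^\infty$ bound on the coefficients $g_n$ (Lemma~\ref{lem:rearrange1}) really does give a uniform $H^1_0$ — and ideally uniform $L^\infty$ via $L^p$ elliptic regularity and Sobolev embedding, using $N\in\{2,3\}$ — bound on $u_{g_n}$, and that in Part (iii) the convergence is quantitative enough to pass to the limit in the difference quotients. A secondary subtlety in Part (ii) is ensuring $u > 0$ strictly so that $(g_0-g_1)u = 0$ forces $g_0 = g_1$; this is exactly the strong maximum principle applied to $-\Delta u + g_0 u = f \geq 0$, $f\not\equiv 0$. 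None of these is deep, but together they constitute the technical core, which is presumably why the authors defer the full proof to Section~\ref{sec:proof-lemma-phi}.
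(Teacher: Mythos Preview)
Your proposal is correct, and Parts~(i) and~(ii) match the paper's argument essentially line for line: uniform $H^1_0$ bounds from the PDE, Rellich compactness, passage to the limit in the weak formulation, and the strict-convexity contradiction via the strong maximum principle are exactly what the paper does in Section~\ref{sec:proof-lemma-phi}. (Your preliminary observation that $\Phi$ is weakly lower semicontinuous as a supremum of affine maps is a nice remark but is not used, by you or by the paper, once the full continuity argument is in place.)

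Part~(iii) is where you take a genuinely different route. The paper subtracts the two PDEs for $u_t=u_{\xi_t}$ and $u=u_g$, multiplies by $u_t+u$, integrates, and after some algebra obtains the \emph{exact identity} $\Phi(\xi_t)-\Phi(g)=-t\int_D(h-g)u_t u\,\md x$; it then compares this with $-t\int_D(h-g)u^2\,\md x$ via H{\"o}lder and $\|u_t-u\|_2\to 0$. You instead sandwich the difference quotient between $-\int_D(h-g)u^2\,\md x$ and $-\int_D(h-g)u_{\xi_t}^2\,\md x$ by inserting $u$ and $u_{\xi_t}$ as competitors in the variational formulation~(\ref{eq:variational_formulation}), then squeeze. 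Both arguments require the same convergence $u_{\xi_t}\to u$ in $L^2$ (and an $L^\infty$ or $L^4$ control on $u_{\xi_t}$ to handle the square), so neither is cheaper analytically; your squeeze is arguably more transparent and avoids the PDE manipulation, while the paper's identity is sharper in that it gives the exact value of $\Phi(\xi_t)-\Phi(g)$ for every $t$, not just the limit.
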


\begin{proof}
  The proof of this lemma is quite long and involved. In order not to
  break the flow of the discussion, the proof is placed in a separate
  section altogether. Please see Sect.~\ref{sec:proof-lemma-phi}.
\end{proof}

The main result of the paper is the following:

\begin{theorem}
  \label{thm:main}
  Suppose that $f$ satisfies assumption {\bf A1}. Then the
  minimization problem (\ref{eq:main_problem}) has a unique solution
  $\hat g\in \cal R$. Moreover, there exists an increasing function
  $\psi$ such that:
  \begin{equation}\label{euler1}
    \hat g=\psi(\hat u)\quad \text{a.e. in } D,
  \end{equation}
  where $\hat u=u_{\hat g}$.
\end{theorem}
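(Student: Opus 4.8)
The plan is to exploit the structure provided by Lemma~\ref{lem:basic_props_of_energy_func}. Since $\Phi$ is weakly continuous on $\overline{\cal R}$ with respect to the $L^2$-topology (part~(\ref{item:weakly_continuous})), and $\overline{\cal R}$ is weakly compact in $L^2(D)$, the relaxed problem $\inf_{g\in\overline{\cal R}}\Phi(g)$ attains its infimum at some $\hat g\in\overline{\cal R}$. Strict convexity of $\Phi$ on $\overline{\cal R}$ (part~(\ref{item:stricyly_convex})), together with convexity of $\overline{\cal R}$, immediately gives uniqueness of this minimizer. The remaining—and main—task is to show that $\hat g$ in fact lies in $\cal R$ itself and has the asserted form $\hat g=\psi(\hat u)$.

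To do this I would derive the first-order optimality condition. Fix an arbitrary $h\in\overline{\cal R}$ and set $\xi_t=\hat g+t(h-\hat g)$, which stays in $\overline{\cal R}$ by convexity for $0\le t\le 1$. Minimality of $\hat g$ forces $\lim_{t\to 0^+}\bigl(\Phi(\xi_t)-\Phi(\hat g)\bigr)/t\ge 0$, so by the derivative formula~(\ref{eq:Phi_deriv}) we obtain
\begin{equation*}
  -\int_D (h-\hat g)\,\hat u^2\,\md x\ge 0,\qquad\text{i.e.}\qquad \int_D h\,\hat u^2\,\md x\le\int_D\hat g\,\hat u^2\,\md x\quad\text{for all }h\in\overline{\cal R}.
\end{equation*}
Hence $\hat g$ maximizes the linear functional $L(h)=\int_D \hat u^2 h\,\md x$ over $\overline{\cal R}$. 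Now I would invoke Lemma~\ref{lem:uniquemaximizer}: if I can produce an increasing function $\psi$ with $\psi(\hat u^2)\in\cal R$ (equivalently $\psi(\hat u)\in\cal R$ after absorbing the squaring into $\psi$, noting $\hat u\ge 0$ by the maximum principle), then this maximizer is unique and equals $\psi(\hat u^2)$, giving both $\hat g\in\cal R$ and the representation~(\ref{euler1}).

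The key obstacle is therefore verifying the hypothesis of Lemma~\ref{lem:uniquemaximizer}, namely that $\hat u^2$ has no significant flat sections where it matters, so that Lemma~\ref{lem:zeroflat} yields the required increasing $\psi$. This is exactly where assumption~{\bf A1} enters: Lemma~\ref{lem:estimate} tells us $u_g$ has no significant flat sections on $S(g)$ for any admissible $g$. The subtlety is that $\hat g$ a priori belongs only to $\overline{\cal R}$, not $\cal R$, so Lemma~\ref{lem:estimate} does not directly apply; one must argue that the set $\{\hat u=c\}$ has measure zero for each $c>0$ on the relevant region, possibly by first establishing $|S(\hat g)|\le|S(\hat u)|$-type comparisons (using Lemma~\ref{rearrange2}) and handling the behaviour on $D\setminus S(\hat g)$ separately, or by a bootstrap once partial information about $\hat g$ is obtained. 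Granting the no-flat-sections property, Lemma~\ref{lem:zeroflat} supplies an increasing $\psi_0$ with $\psi_0(\hat u)\in\cal R$; composing appropriately so that the linear weight $\hat u^2$ is matched, Lemma~\ref{lem:uniquemaximizer} finishes the proof and simultaneously certifies that the relaxed minimizer solves the original problem~(\ref{eq:main_problem}). Finally, since $\hat g\in\cal R$ and $\cal R\subseteq\overline{\cal R}$, the value $\Phi(\hat g)$ is the infimum over $\overline{\cal R}$ and a fortiori over $\cal R$, and uniqueness within $\overline{\cal R}$ gives uniqueness within $\cal R$.
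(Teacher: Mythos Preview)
Your overall architecture matches the paper's exactly: relax to $\overline{\cal R}$, use weak continuity for existence and strict convexity for uniqueness, derive from~(\ref{eq:Phi_deriv}) that $\hat g$ maximizes $L(h)=\int_D h\,\hat u^2\,\md x$ over $\overline{\cal R}$, and then try to invoke Lemma~\ref{lem:uniquemaximizer}. However, you misdiagnose the obstacle at the final step, and the real gap is not addressed.

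First, Lemma~\ref{lem:estimate} \emph{does} apply directly to $\hat g\in\overline{\cal R}$: its hypothesis is merely that $0\le g\le 1$, and Lemmas~\ref{lem:rearrange1} and~\ref{lem:rearrange3} guarantee this for every element of $\overline{\cal R}$. So there is no subtlety there. The actual difficulty is that Lemma~\ref{lem:estimate} only rules out flat sections of $\hat u$ on $S(\hat g)$, not on all of $D$. On $S(\hat g)^c$ the equation reduces to $-\Delta\hat u=f$, and if $f$ vanishes on a set of positive measure there, $\hat u$ may well be constant on that set. Consequently Lemma~\ref{lem:zeroflat} cannot be applied on $D$ as you suggest; it can only be applied on $S(\hat g)$, and even that requires first producing a member of $\cal R$ supported inside $S(\hat g)$ (this is where Lemma~\ref{rearrange2}, which gives $|S(g_0)|\le |S(\hat g)|$, is actually used).

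What is then missing from your sketch is the mechanism for extending the resulting increasing function $\psi_S$ from $S(\hat g)$ to all of $D$ while preserving monotonicity and landing in $\cal R$. The paper supplies this with a nontrivial swap argument: assuming $\operatorname{ess\,inf}_{S(\hat g)}\hat u<\operatorname{ess\,sup}_{S(\hat g)^c}\hat u$, one constructs a measure-preserving exchange of mass between a piece of $S(\hat g)$ where $\hat u$ is small and a piece of $S(\hat g)^c$ where $\hat u$ is large, obtaining a competitor $\overline g\in\overline{\cal R}$ with $\int_D\overline g\,\hat u^2\,\md x>\int_D\hat g\,\hat u^2\,\md x$, contradicting maximality. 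Once $\hat u$ is known to attain its largest values on $S(\hat g)$, one can extend $\psi_S$ by zero below the threshold $\alpha^2=(\operatorname{ess\,inf}_{S(\hat g)}\hat u)^2$, and only then does Lemma~\ref{lem:uniquemaximizer} close the argument. Your phrase ``handling the behaviour on $D\setminus S(\hat g)$ separately'' gestures toward this, but without the swap/ordering step the extension cannot be made increasing, and the proof does not go through.
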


\begin{proof} 

  We relax the minimization problem (\ref{eq:main_problem}) first by
  extending the admissible set $\cal{R}$ to
  $\overline{\cal{R}}$. Thus, we consider:
  \begin{equation}
    \label{eq:relaxed}
    \inf_{g\in\overline{\cal{R}}}\Phi(g)
  \end{equation}
  By Lemma \ref{lem:basic_props_of_energy_func} (\ref{item:weakly_continuous}), $\Phi$ is
  weakly continuous on $\overline{\cal{R}}$ with respect to the
  $L^2$-topology. Hence, the minimization problem (\ref{eq:relaxed})
  is solvable. Furthermore, thanks to the strict convexity of $\Phi$
  (Lemma \ref{lem:basic_props_of_energy_func} (\ref{item:stricyly_convex})) the solution to
  (\ref{eq:relaxed}) is unique.  Let us denote this solution by
  $\hat{g}$.

  Fix $g\in\overline{\cal{R}}$, and set $g_t=\hat{g}+t(g-\hat{g})$,
  for $t\in (0,1)$. Due to the convexity of $\overline{\cal{R}}$,
  $g_t\in\overline{\cal{R}}$. From Lemma
  \ref{lem:basic_props_of_energy_func}~(\ref{item:Phi_deriv}) we can derive
  $\int_D(g-\hat{g})\hat{u}^2 \md x\le0$. Whence, $\hat{g}$ maximizes the
  linear functional $L(h) \defeq \int_Dh\hat{u}^2 \md x$, relative to
  $h\in\overline{\cal{R}}$. From Lemma \ref{lem:rearrange1} and Lemma
  \ref{lem:estimate}, it follows that the graph of $\hat{u}_S$, the
  restriction of $\hat{u}$ to the set $S(\hat{g})$, has no significant
  flat sections on $S(\hat{g})$. From Lemma \ref{rearrange2}, we know
  that there exists a $g_1\in\cal{R}$ such that $S(g_1)\subseteq
  S(\hat{g})$. Therefore, if we denote by ${\cal R}_S$ the functions
  which are rearrangements of $g_1$ on $S(\hat g)$, then by Lemma
  \ref{lem:zeroflat} we infer the existence of an increasing function
  $\psi_S$ such that $\psi_S({(\hat u)^2}_S)\in{\cal R}_S$. We now
  proceed to extending $\psi_S$ to an increasing function $\psi$ in
  such a way that $\psi (\hat u)\in{\cal R}(g_1)=\cal{R}$. Let us
  assume for the moment that this task has been accomplished. Then,
  from Lemma \ref{lem:uniquemaximizer}, it follows that $\psi(\hat u)$
  is the unique maximizer of the functional $L$, whence we must have
  $\hat g=\psi(\hat u)$, which is the desired result.

  We now come to the issue of extending $\psi_S$. This is done in two
  steps. The first step is to show that $\hat u$ attains its largest
  values on $S(\hat g)$. To this end, it suffices to prove the
  following inequality:
\begin{equation}
  \label{inequality}
  \alpha\equiv {\ess} \inf_{S(\hat g)}\hat u\ge {\ess} \sup_{S(\hat
    g)^c}\hat u\equiv\beta,
\end{equation}
where $S(\hat g)^c$ denotes the complement of $S(\hat g)$. We
prove~(\ref{inequality}) by contradiction. So, let us suppose that
$\alpha<\beta$. Hence, there exist constants $\gamma,\delta,$ and sets
$A\subseteq S(\hat g)$, $B\subseteq S(\hat g)^c$, such that
$\beta>\gamma>\delta>\alpha$, and:
\begin{equation*}
  \left\{
    \arrayoptions{1ex}{1.2}
  \begin{array}{ll}
    \hat u\le\delta & \text{on } A,\\
    \hat u\ge\gamma & \text{on } B .
  \end{array}
  \right.
\end{equation*}
We may assume that $|A|=|B|$, otherwise we consider subsets of $A$ and
$B$. Let $\eta:A\rightarrow B$ be a measure preserving
bijection.\footnote{Such a map exists. See,
  {\eg},~\cite{Royden:RealAnalysis:1988}.} Next, we define a new
function $\overline{g}$ as follows:
\begin{equation*}
  \overline{g}(x)
  =
  \left\{
    \arrayoptions{1ex}{1.2}
    \begin{array}{ll}
      {\hat g}(x) & x\in (A\cup B)^c,\\
      {\hat g}(\eta (x)) & x\in A, \\
      {\hat g}(\eta^{-1}(x)) & x\in B.
    \end{array}
  \right.
\end{equation*}
Clearly $\overline g$ is a rearrangement of $\hat g$. Since $\hat
g\in{\overline{\cal R}}$, it follows from Lemma \ref{lem:rearrange5}
that $\overline{g}\in{\overline{\cal R}}$. Thus:
\begin{multline*}
  \int_D\overline{g}\hat u^2 \md x-\int_D\hat g\hat u^2 \md x = \int_{A\cup B}\overline {g}\hat u^2 \md x-\int_{A\cup B}\hat g\hat u^2 \md x
  =\int_B\overline{g}\hat u^2 \md x-\int_A\hat g\hat u^2 \md x \\
   = \int_B\hat g(\eta^{-1}(x))\hat u^2 \md x-\int_A\hat g\hat u^2 \md x=\int_A\hat g(x)\hat u^2(\eta(x)) \md x-\int_A\hat g\hat u^2 \md x
   \ge (\gamma^2-\delta^2)\int_A\hat g \md x>0,
\end{multline*}
which contradicts the maximality of $\hat g$.

In the second step, we give an explicit formula for the extended
function as follows:
\begin{equation*}
  \hat{\psi} (t) =
  \left\{
    \arrayoptions{1ex}{1.2}
    \begin{array}{ll}
      \psi_S(t) &t>\alpha^2, \\
      0 &t\le\alpha^2,
    \end{array}
  \right.
\end{equation*}
where $\alpha$ is defined in (\ref{inequality}). Clearly, $\hat{\psi}$
is increasing and $\hat{\psi} (\hat u^2)\in{\cal R}(g_1)=\cal{R}$.
Hence, by setting $\psi(t) \defeq \hat{\psi}(t^2)$ we derive
(\ref{euler1}). The proof of the theorem is completed.
\end{proof}

\begin{remark}
  As mentioned earlier, in the special case of $g_0=\chi_{E_0}$ with
  $|E_0|=\alpha$, the minimization problem~(\ref{eq:main_problem})
  reduces to the one considered
  in~\cite{Henrot_Maillot:2001:Optim_Shape_Actuator}. So, $\hat
  g=\chi_{\hat E}$ with $|\hat E|=\alpha$. Hence, from (\ref{euler1})
  we deduce that $\hat E=\{\hat u>\gamma\}$, for some
  $\gamma>0$. Whence, we derive the following boundary value problem:
  \begin{equation}
    \label{eq323}
    \left\{
      \arrayoptions{1ex}{1.2}
      \begin{array}{ll}
        -\Delta\hat u+\hat u\chi_{\{\hat u>\gamma\}}=f(x) &\text{in}\; D, \\
        \hat u=0 &\text{on}\;\partial D.
      \end{array}
    \right.
  \end{equation}
  By setting $U=\hat u-\gamma$, the differential equation in (\ref{eq323}) becomes:
\begin{equation}
\label{eq324}
\Delta U=(U+\gamma-f)\chi_{\{U> 0\}}-f\chi_{\{U \leq 0\}}.
\end{equation}
So, (\ref{eq324}) is an obstacle problem of type:
\begin{equation}
\label{eq325}
\Delta U=G(x)\chi_{\{U > 0\}}-H(x)\chi_{\{U \leq 0\}}, 
\end{equation} 
where $G\leq 0$, because $\hat u\leq f$, and $H(x)\geq 0$. Since $G(x)+H(x)\geq 0$,
we can apply the result of~\cite{Shahgholian:C11_Reg_Elliptic:2003} to deduce that the free boundary has $C\sp {1,1}$ regularity.
\end{remark}

\subsection{Local minimizers}

Even though $\hat g$ in Theorem \ref{thm:main} is a global minimizer,
is it possible for $\Phi$ to have \emph{non-global} local minimizers
over ${\cal R}$? The answer to this question is \emph{negative}. To
prove this, we need a less restrictive version of Theorem 3.3 (iii)
in~\cite{Burton:1989}, stated as follows:

\begin{lemma}
  \label{lem:less_restrictive_Burton}
  Let ${\cal N}: L^r(D)\rightarrow\RR$ be weakly sequentially
  continuous, and let ${\cal R}={\cal R}(h_0)$ denote the rearrangement
  class generated by some $h_0\in L^r(D)$. Assume that for every pair
  $(h_1,h_2)\in \overline{\cal R}\times \overline{\cal R}$ the
  following relation holds:
  \begin{equation*}
    \lim_{t\rightarrow 0^+}\frac{{\cal N}(th_2+(1-t)h_1)-{\cal
        N}(h_1)}{t} = \int_D(h_2-h_1) \, {\cal G} \,  \md x,
  \end{equation*}
  for some ${\cal G}\in L^{r^\prime}(D)$. Suppose ${\cal U}$ is a
  strong neighborhood (relative to ${\cal R}$) of $\hat h\in {\cal
    R}$, for which we have:
  \begin{equation*}
    \forall h\in{\cal U}: {\cal N}(\hat h)\leq\cal{N}(h).
  \end{equation*}
  Then, $\hat h$ minimizes the linear functional ${\cal
    L}(h) \defeq \int_D h \, {\cal G} \,  \md x$, relative to $h\in\overline{\cal R}$.
\end{lemma}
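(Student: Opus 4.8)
The plan is to mimic the structure of the proof of Theorem~\ref{thm:main}, but localized: since $\hat h$ is only a local minimizer over ${\cal R}$ (in the strong topology), I cannot directly test with arbitrary competitors in $\overline{\cal R}$. Instead, I would first reduce to testing with \emph{rearrangements} of $\hat h$ that are close to $\hat h$ in $L^r$, and then use the density of ${\cal R}$ in $\overline{\cal R}$ together with the linearity of the limiting functional to upgrade the conclusion to all of $\overline{\cal R}$. Concretely, fix $h\in\overline{\cal R}$ and set $h_t = th + (1-t)\hat h$ for $t\in(0,1)$; by convexity of $\overline{\cal R}$, $h_t\in\overline{\cal R}$. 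The hypothesis gives
\begin{equation*}
  \lim_{t\to 0^+}\frac{{\cal N}(h_t)-{\cal N}(\hat h)}{t} = \int_D (h-\hat h)\,{\cal G}\, \md x .
\end{equation*}
If I can show this limit is $\geq 0$ for every $h\in\overline{\cal R}$, then $\int_D h\,{\cal G}\,\md x \geq \int_D \hat h\,{\cal G}\,\md x$ for all such $h$, which is exactly the assertion that $\hat h$ minimizes ${\cal L}$ over $\overline{\cal R}$.

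So the crux is to prove that directional derivative is nonnegative. The issue is that $h_t$ lies in $\overline{\cal R}$, not necessarily in ${\cal R}$, so local minimality over ${\cal R}$ does not immediately apply to $h_t$. To get around this, I would exploit the weak sequential continuity of ${\cal N}$ and the known structure of $\overline{\cal R}$ as the weak closure of ${\cal R}$: for each fixed small $t$, there is a sequence $\{h_t^{(k)}\}\subseteq {\cal R}$ with $h_t^{(k)}\rightharpoonup h_t$ in $L^r$, hence ${\cal N}(h_t^{(k)})\to {\cal N}(h_t)$. The trouble is that weak convergence does not give strong (norm) convergence, so the $h_t^{(k)}$ need not eventually fall inside the strong neighborhood ${\cal U}$. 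This is the main obstacle. The standard remedy, in the spirit of Burton's arguments, is to first establish the inequality for competitors $h$ that are themselves rearrangements of $\hat h$ (equivalently, elements of ${\cal R}$): for $h\in{\cal R}$, the segment $h_t$ need not be in ${\cal R}$ either, but one can replace $h_t$ by a rearrangement $\tilde h_t\in{\cal R}$ whose ${\cal N}$-value differs from ${\cal N}(h_t)$ by $o(t)$ and which converges to $\hat h$ strongly as $t\to 0^+$ — so $\tilde h_t\in{\cal U}$ for $t$ small, giving ${\cal N}(\hat h)\leq{\cal N}(\tilde h_t) = {\cal N}(h_t)+o(t)$ and hence the nonnegativity of the directional derivative along that segment.

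Granting the inequality $\int_D (h-\hat h)\,{\cal G}\,\md x \geq 0$ for all $h\in{\cal R}$, I then extend it to $\overline{\cal R}$: the linear functional $h\mapsto \int_D h\,{\cal G}\,\md x$ is weakly continuous on $L^r$ (as ${\cal G}\in L^{r'}$), so $\inf_{h\in\overline{\cal R}}\int_D h\,{\cal G}\,\md x = \inf_{h\in{\cal R}}\int_D h\,{\cal G}\,\md x$, because $\overline{\cal R}$ is the weak closure of ${\cal R}$ and the infimum of a weakly continuous functional over a set equals its infimum over the weak closure. Therefore $\hat h$ minimizes ${\cal L}$ over $\overline{\cal R}$, as claimed.

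I expect the delicate point to be the construction of the perturbed rearrangements $\tilde h_t\in{\cal R}$ approximating the convex combination $h_t$ with an $o(t)$ error in ${\cal N}$ and with $\tilde h_t\to\hat h$ strongly; this is where a measure-preserving-bijection argument (analogous to the $\overline g$ construction in the proof of Theorem~\ref{thm:main}) or a direct appeal to the finer results in~\cite{Burton:1989} will be needed. Once that is in place, the rest is a routine combination of the given limit formula, the sign of the directional derivative, and weak continuity of the linear functional.
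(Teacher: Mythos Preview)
The paper does not actually prove this lemma: it is stated without proof as ``a less restrictive version of Theorem~3.3\,(iii) in~\cite{Burton:1989}'', and then immediately used in Theorem~\ref{thm:local_minim}. So there is no in-paper argument to compare against; the paper's ``proof'' is a citation to Burton.

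Your overall strategy --- prove the inequality $\int_D (h-\hat h)\,{\cal G}\,\md x\ge 0$ first for $h\in{\cal R}$ and then pass to $\overline{\cal R}$ by weak continuity of the linear functional --- is the right shape, and you have correctly located the real obstacle: the local minimality is over ${\cal R}$ in the \emph{strong} topology, while the directional-derivative hypothesis only concerns straight-line segments, which lie in $\overline{\cal R}\setminus{\cal R}$ in general.

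Where your sketch has a genuine gap is the bridging step. You propose to produce $\tilde h_t\in{\cal R}$ with $\tilde h_t\to\hat h$ strongly and ${\cal N}(\tilde h_t)={\cal N}(h_t)+o(t)$. But the hypotheses give you no quantitative modulus for ${\cal N}$: weak sequential continuity yields no rate, and the directional-derivative assumption is stated only along line segments $s\mapsto sh_2+(1-s)h_1$, so it does not directly control ${\cal N}$ along a curved path $t\mapsto\tilde h_t$ inside ${\cal R}$. In particular, knowing $\|\tilde h_t-h_t\|_r=o(t)$ does not by itself imply ${\cal N}(\tilde h_t)-{\cal N}(h_t)=o(t)$. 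The measure-preserving-swap idea you mention is indeed the right tool, but what Burton's argument actually builds (and what you would need) is a one-parameter family $\tilde h_t\in{\cal R}$ with $\tilde h_t\to\hat h$ strongly \emph{and} $(\tilde h_t-\hat h)/t\to h-\hat h$ in $L^r$; one then has to combine this with the Gateaux-derivative hypothesis \emph{at nearby base points} (or with an integrated mean-value argument) to conclude. Simply invoking ``finer results in~\cite{Burton:1989}'' is appropriate here --- that is exactly what the paper does --- but your intermediate claim as written overstates what follows from the stated hypotheses.
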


Now we state our result concerning local minimizers.
\begin{theorem}
  \label{thm:local_minim}
  Let the hypotheses of Theorem \ref{thm:main} hold. If $g_1$ and
  $g_2$ are two local minimizers of $\Phi(g)$ relative to $g\in {R}$,
  then $g_1=g_2$.
\end{theorem}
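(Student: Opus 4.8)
The strategy is to reduce the equality $g_1 = g_2$ to the uniqueness of the maximizer of a suitable linear functional, exactly as in the proof of Theorem~\ref{thm:main}, but applied separately to each local minimizer. Fix $i \in \{1,2\}$ and write $u_i = u_{g_i}$. Applying Lemma~\ref{lem:less_restrictive_Burton} with ${\cal N} = \Phi$, $r = 2$, and — via Lemma~\ref{lem:basic_props_of_energy_func}~(\ref{item:Phi_deriv}) — with ${\cal G} = -u_i^2$, the local minimality of $g_i$ on a strong neighbourhood in ${\cal R}$ yields that $g_i$ \emph{minimizes} $h \mapsto -\int_D h\, u_i^2\, \md x$ over $\overline{\cal R}$, i.e. $g_i$ \emph{maximizes} $L_i(h) \defeq \int_D h\, u_i^2\, \md x$ relative to $h \in \overline{\cal R}$.

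Next I would run the two-step extension argument from Theorem~\ref{thm:main} verbatim, with $\hat g$ replaced by $g_i$ and $\hat u$ by $u_i$. Step one: since $g_i$ maximizes $L_i$, the rearrangement/swapping construction (moving mass of $g_i$ from a set where $u_i$ is small to a set where $u_i$ is large produces an element of $\overline{\cal R}$ with strictly larger $L_i$-value) shows that $u_i$ attains its largest values on $S(g_i)$, i.e. ${\ess}\inf_{S(g_i)} u_i \ge {\ess}\sup_{S(g_i)^c} u_i$. Step two: Lemma~\ref{lem:estimate} gives that $u_i$ has no significant flat sections on $S(g_i)$, so by Lemmas~\ref{rearrange2} and~\ref{lem:zeroflat} there is an increasing $\psi_{i,S}$ with $\psi_{i,S}((u_i^2)_S)$ a rearrangement of a fixed generator on $S(g_i)$, which extends (using the threshold $\alpha_i^2$) to an increasing $\psi_i$ with $\psi_i(u_i) \in {\cal R}$; then Lemma~\ref{lem:uniquemaximizer} forces $g_i = \psi_i(u_i)$ a.e. In particular \emph{each local minimizer $g_i$ is a critical point satisfying the same Euler--Lagrange relation as the global minimizer}.

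To finish, I would invoke convexity. The functional $\Phi$ is convex on the convex set $\overline{\cal R}$ (Lemma~\ref{lem:basic_props_of_energy_func}~(\ref{item:stricyly_convex})), and the conclusion of the previous paragraph says precisely that $g_i$ satisfies the first-order optimality inequality $\int_D (h - g_i)\, u_i^2\, \md x \le 0$ for all $h \in \overline{\cal R}$, which for a convex functional is equivalent to $g_i$ being a \emph{global} minimizer of $\Phi$ over $\overline{\cal R}$. Hence both $g_1$ and $g_2$ are global minimizers of the relaxed problem~(\ref{eq:relaxed}); by the strict convexity of $\Phi$ the global minimizer is unique, so $g_1 = g_2$.

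The main obstacle is the passage from ``local minimizer over ${\cal R}$ (in the strong $L^2$-topology)'' to ``maximizer of the linearized functional over the much larger weakly-closed convex hull $\overline{\cal R}$''. This is exactly what Lemma~\ref{lem:less_restrictive_Burton} is designed to deliver, so the real work is simply checking its hypotheses: weak sequential continuity of $\Phi$ on $\overline{\cal R}$ (Lemma~\ref{lem:basic_props_of_energy_func}~(\ref{item:weakly_continuous})) and the Gateaux-derivative formula with ${\cal G} = -u_{g_1}^2 \in L^\infty(D) \subseteq L^2(D) = L^{2'}(D)$ (Lemma~\ref{lem:basic_props_of_energy_func}~(\ref{item:Phi_deriv})). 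One subtlety to state carefully: in Lemma~\ref{lem:less_restrictive_Burton} the linear functional ${\cal G}$ must be the \emph{same} for all pairs $(h_1,h_2)$, whereas~(\ref{eq:Phi_deriv}) gives a derivative depending on the base point $g$; the resolution is that the lemma is applied with the base point fixed at $\hat h = g_i$, so only the single function ${\cal G} = -u_{g_i}^2$ is needed, and the hypothesis should be read (as in Burton) as holding along rays emanating from points of $\overline{\cal R}$, which is what~(\ref{eq:Phi_deriv}) provides. Once that is in place, everything else is a faithful repetition of the Theorem~\ref{thm:main} argument followed by a one-line convexity remark.
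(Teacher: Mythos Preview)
Your proof is correct, and shares its opening move with the paper: both apply Lemma~\ref{lem:less_restrictive_Burton} together with Lemma~\ref{lem:basic_props_of_energy_func}~(\ref{item:Phi_deriv}) to conclude that each local minimizer $g_i$ maximizes $L_i(h)=\int_D h\,u_i^2\,\md x$ over $\overline{\cal R}$. After that the routes diverge. The paper does \emph{not} repeat the Euler--Lagrange construction of Theorem~\ref{thm:main}, nor does it invoke convexity abstractly; instead it uses the two maximality statements to write down the chain
\[
\Phi(g_1)\;\leq\;2\!\int_D fu_1-\!\int_D(|\nabla u_1|^2+g_2u_1^2)\;\leq\;\Phi(g_2)\;\leq\;2\!\int_D fu_2-\!\int_D(|\nabla u_2|^2+g_1u_2^2)\;\leq\;\Phi(g_1),
\]
where the second and fourth inequalities come from the variational formulation~(\ref{eq:variational_formulation}); equality throughout forces $u_1=u_2$ and hence $g_1=g_2$. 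Your approach is more conceptual: from ``$g_i$ maximizes $L_i$'' you pass (via the standard subgradient inequality for convex functionals, using Lemma~\ref{lem:basic_props_of_energy_func}~(\ref{item:stricyly_convex}) and~(\ref{item:Phi_deriv})) to ``$g_i$ is a \emph{global} minimizer of $\Phi$ on $\overline{\cal R}$'', and then strict convexity finishes. This is arguably cleaner and makes the role of convexity explicit; the paper's chain is more hands-on but self-contained. Note that your entire middle paragraph --- rerunning the two-step extension from Theorem~\ref{thm:main} to obtain $g_i=\psi_i(u_i)$ --- is superfluous: the first-order inequality $\int_D(h-g_i)u_i^2\,\md x\le 0$ that you actually use in the convexity step comes directly from Lemma~\ref{lem:less_restrictive_Burton}, not from the Euler--Lagrange relation, so that detour can be dropped.
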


\begin{proof}
  For simplicity we set $u_1 \defeq u_{g_1}$ and $u_2 \defeq
  u_{g_2}$. Lemma \ref{lem:less_restrictive_Burton}, in conjunction
  with Lemma \ref{lem:basic_props_of_energy_func}~(\ref{item:Phi_deriv}), implies that $g_1$
  and $g_2$ are maximizers of the linear functionals:
  \begin{equation*}
  {\cal L}_1(g) \defeq \int_Dgu_1^2 \md x,
  \end{equation*}
  and
  \begin{equation*}
  {\cal L}_2(g) \defeq \int_Dgu_2^2 \md x,
  \end{equation*}
  relative to $g\in\overline{\cal R}$, respectively. In particular, we
  infer:
  \begin{equation}
    \label{eq3456}
    \int_Dg_2u_1^2 \md x\leq\int_Dg_1u_1^2 \md x \quad \text{and}\quad \int_Dg_1u_2^2 \md x\leq\int_Dg_2u_2^2 \md x.
  \end{equation}
  Thus, we obtain:
  \arrayoptions{0.5ex}{1.1}
  \begin{eqnarray}
    \label{eq34567}
    2\int_Dfu_1 \md x-\int_D(|\nabla u_1|^2+g_1u_1^2) \md x&\leq&2\int_Dfu_1 \md x-\int_D(|\nabla u_1|^2+g_2u_1^2) \md x \nonumber \\
    &\leq&2\int_Dfu_2 \md x-\int_D(|\nabla u_2|^2+g_2u_2^2) \md x \nonumber \\
    &\leq& 2\int_Dfu_2 \md x-\int_D(|\nabla u_2|^2+g_1u_2^2) \md x \nonumber \\
    &\leq& 2\int_Dfu_1 \md x-\int_D(|\nabla u_1|^2+g_1u_1^2) \md x,
  \end{eqnarray}
  where the first and third inequalities are consequences of
  (\ref{eq3456}), whereas the second and the fourth inequalities
  follow from (\ref{eq:variational_formulation}). From (\ref{eq34567}) we see that all
  inequalities must in fact be equalities. This, in turn, implies that
  $u_1=u_2$, due to the uniqueness. Whence, we deduce $g_1=g_2$ as
  desired.
\end{proof}

\subsection{Radial domain}

Here we present our result regarding radial symmetry of the
optimizers. Note how, compared with similar results in the literature,
in our approach, such result may be obtained with minimal
technicalities:

\begin{theorem}
  \label{them:radial_sym}
  Suppose that $f$ is radial and satisfies assumption {\bf
    A1}. Then the solution of (\ref{eq:main_problem}) is radial and non-increasing.
\end{theorem}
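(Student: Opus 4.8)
The plan is to exploit the uniqueness from Theorem~\ref{thm:main} together with the rotational symmetry of the data. Let $\hat g$ be the unique minimizer of~(\ref{eq:main_problem}) and $\hat u = u_{\hat g}$. First I would observe that for any rotation $\rho\in O(N)$ fixing the ball $D$, the rotated function $\hat g\circ\rho$ lies in $\cal R$ (since rotations are measure-preserving, so $\hat g\circ\rho$ is a rearrangement of $g_0$), and $u_{\hat g\circ\rho} = \hat u\circ\rho$ because $f$ is radial and $-\Delta$ commutes with rotations. Since $\Phi(\hat g\circ\rho) = \int_D f\,(\hat u\circ\rho)\,\md x = \int_D (f\circ\rho^{-1})\,\hat u\,\md x = \int_D f\hat u\,\md x = \Phi(\hat g)$, the rotated function is also a minimizer, so by uniqueness $\hat g\circ\rho = \hat g$ a.e., for every $\rho$. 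Averaging over $O(N)$ (or simply noting this holds for all rotations simultaneously) yields that $\hat g$ is radial; consequently $\hat u$ is radial as well.

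Next I would establish monotonicity (non-increasingness in the radial variable). The natural route is through the optimality condition~(\ref{euler1}): $\hat g = \psi(\hat u)$ with $\psi$ increasing. So it suffices to show $\hat u$ is radially non-increasing, i.e. $\hat u = \hat u^*$ up to the obvious identification, where $\hat u^*$ is the Schwarz symmetrization. Here I would bring in Lemma~\ref{lem:Dirichlet}. The idea is a competitor argument: replace $\hat g$ by a decreasing rearrangement of $\hat g$ and compare energies via the variational characterization~(\ref{eq:variational_formulation}). More precisely, using that $\hat u$ is already radial and the strong maximum principle gives $\hat u>0$ in $D$, one can test the supremum in~(\ref{eq:variational_formulation}) for a suitably symmetrized $g$ with the function $\hat u^*=\hat u$ and use the Hardy--Littlewood inequality $\int_D g\,\hat u^2\,\md x \le \int_D g^\Delta\,(\hat u^2)^\Delta\,\md x$ together with~(\ref{eq:PZ}) to show that choosing $g$ to be the decreasing radial rearrangement $\tilde g$ of $g_0$ does not increase $\Phi$; then uniqueness forces $\hat g = \tilde g$, which is radial and non-increasing. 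Alternatively, and perhaps more cleanly, one shows directly that $\hat u$ must coincide with its Schwarz symmetrization: if not, the strict inequality branch of Lemma~\ref{lem:Dirichlet}~(\ref{item:dirich_v_trans_vStar}) combined with Lemma~\ref{lem:estimate} (no significant flat sections, which rules out the degenerate case $\nabla\hat u = 0$ on a positive-measure set where $0<\hat u<M$) would let us build a strictly better competitor, contradicting minimality.

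The main obstacle I anticipate is the monotonicity step rather than the radial step. The subtlety is that symmetrizing $\hat u$ and symmetrizing $\hat g$ are coupled through the PDE: one cannot simply symmetrize $\hat u$ and expect it to remain the solution associated to some $g\in\cal R$. The cleanest argument must either (a) show that the pair $(\hat g,\hat u)$ is automatically "co-monotone" — $\hat g$ a decreasing function of the distance to the center forces $\hat u$ decreasing and vice versa — using~(\ref{euler1}) to close the loop, or (b) set up the Schwarz-symmetrization comparison so that the inequalities in~(\ref{eq:variational_formulation}) all go the right way and then invoke uniqueness. I would pursue route (b): let $\tilde g$ denote the radially non-increasing rearrangement of $g_0$ (which lies in $\cal R$), and show $\Phi(\tilde g)\le\Phi(\hat g)$ by testing~(\ref{eq:variational_formulation}) with $v=\hat u^*$, bounding $\int_D \tilde g\,(\hat u^*)^2\,\md x \le \int_D \hat g\,\hat u^2\,\md x$ via Hardy--Littlewood and $\int_D|\nabla\hat u^*|^2\,\md x\le\int_D|\nabla\hat u|^2\,\md x$ via~(\ref{eq:PZ}); uniqueness then gives $\hat g=\tilde g$. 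The care needed is in verifying the Hardy--Littlewood step uses the right rearrangements and that the supremum in~(\ref{eq:variational_formulation}) is indeed attained at the relevant competitor, but no heavy machinery beyond the lemmas already in hand should be required.
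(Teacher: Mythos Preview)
Your argument for radiality is correct and matches the paper's: rotate, observe $\Phi(\hat g\circ\rho)=\Phi(\hat g)$, invoke uniqueness.

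The monotonicity step, however, has a genuine gap. In route~(b) you claim $\int_D \tilde g\,(\hat u^*)^2\,\md x \le \int_D \hat g\,\hat u^2\,\md x$ via Hardy--Littlewood, but Hardy--Littlewood gives the \emph{opposite} inequality: $\int_D \hat g\,\hat u^2\,\md x \le \int_D (\hat g)^*\,(\hat u^2)^*\,\md x = \int_D \tilde g\,(\hat u^*)^2\,\md x$. Moreover, testing the supremum in~(\ref{eq:variational_formulation}) with any particular $v$ produces a \emph{lower} bound on $\Phi(\tilde g)$, so even with every inequality oriented as you wish you could at best conclude $\Phi(\tilde g)\ge\Phi(\hat g)$, which is already known since $\hat g$ minimises. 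The term $-\int_D g v^2\,\md x$ simply has the wrong sign for a P\'olya--Szeg\H{o}/Hardy--Littlewood comparison to manufacture a competitor with \emph{smaller} $\Phi$; this is exactly why the paper's symmetrization argument for the \emph{maximization} problem uses the increasing rearrangement $\tilde g_*$ and needs the extra hypothesis $f=f^*$. Route~(a) would run into the same sign obstruction when you try to build the ``strictly better competitor.''

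The paper takes a completely different and much shorter path for monotonicity. Once $\hat u$ is known to be radial, the equation becomes the ODE $-(r^{N-1}\hat u')' = r^{N-1}(f-\hat g\,\hat u)$. Assumption~\textbf{A1} is then used directly: since $\hat g\le 1$ and (from the proof of Lemma~\ref{lem:estimate}) $\hat u<v_f\le f$, the right-hand side is strictly positive; integrating from $0$ gives $\hat u'(r)<0$ for $r>0$. Then~(\ref{euler1}) with $\psi$ increasing yields $\hat g=\psi(\hat u)$ non-increasing. No rearrangement inequalities are used, and $f$ need only be radial, not radially non-increasing.
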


\begin{proof}
  Let $g$ denote the solution of (\ref{eq:main_problem}) and let $R$
  be a rotational map about the origin. Since $f$ is radial, we infer
  $u_g\circ R=u_{g\circ R}$. Thus, $\Phi (g\circ R)=\Phi (g)$, and
  $g\circ R$ is also a solution of (\ref{eq:main_problem}). By
  uniqueness, we deduce $g\circ R=g$, for every rotational map
  $R$. Whence, $g$ is radial, as desired. To prove that $g$ is
  non-increasing, we observe that, since $u=u_g$ is radial, we can
  write the equation in (\ref{eq:bvp_main}) as:
$$-(r^{N-1}u')'=r^{N-1}(f-gu).$$
Since $f\ge v_f$ by {\bf A1}, and $g\le 1$ by assumption, we have
$f-gu\ge v_f-u$. Furthermore, $v_f-u>0$ by the proof of Lemma
\ref{lem:estimate}. Hence,
$$-(r^{N-1}u')'>0,\ \ \ -r^{N-1}u'>0,\ \ \ \ u'<0.$$
By Theorem~\ref{thm:main}, $g=\psi(u)$, for some non-decreasing
$\psi$. As a result, $g$ is non-increasing, as desired.
\end{proof}

\subsection{Some remarks on maximization}

In addition to the minimization problem (\ref{eq:main_problem}), one
can also consider the maximization problem:
\begin{equation}
\label{maximization}
\sup_{g\in{\cal R}}\Phi (g).
\end{equation}
Since $\Phi$ is weakly continuous and convex, $\Phi$ reaches its
maximum value at the extremal points of the convex set
$\overline{\mathcal R}$ ({\ie}, the elements of $\mathcal R$). Hence,
problem (\ref{maximization}) is solvable (see Theorem 7 of
\cite{Burton:1987} or Remark 3.1 of
\cite{Henrot_Maillot:2001:Optim_Shape_Actuator}). Moreover, if
the assumption {\bf A1} holds, along the same lines as in the proof of
Theorem~\ref{thm:main}, it can be shown that, if $\tilde g$ is a
maximizer, then:
\begin{equation}\label{eq321}
\tilde g=\tilde\psi (\tilde u),
\end{equation}
almost everywhere in $D$, for some decreasing function $\tilde\psi$.
Here $\tilde u=u_{\tilde g}$, the solution of (\ref{eq:bvp_main}) with
$g=\tilde g$.

Note that, for maximizers we do not have uniqueness in general.
However, we are going to prove that, in case $D$ is a ball and $f$ is
radially symmetric and non-increasing, any maximizer is radially
symmetric and non-decreasing, hence unique. Indeed, let
$v=u_{\tilde{g}_*}$, where $\tilde{g}_*$ is the increasing Schwarz
symmetrization of $\tilde g$ (see~\cite{Kawohl:1985}). For
simplicity, we write $u$ instead of $u_{\tilde g}$. By
Lemma~\ref{lem:Dirichlet}~(\ref{item:dirich_integ_nabla_vstar_lt_v}):
\begin{equation}
  \label{eq500}
  -\frac{1}{2}\Phi (\tilde g)=\frac{1}{2}\int_D|\nabla
  u|^2 \md x+\frac{1}{2}\int_D\tilde g u^2 \md x-\int_Dfu \md x
  \geq \frac{1}{2}\int_D|\nabla u^*|^2 \md x+\frac{1}{2}\int_D\tilde g
  u^2 \md x-\int_Dfu \md x.
\end{equation}

Now, by applying the Hardy-Littlewood inequality to the last two
integrals in (\ref{eq500}), keeping in mind that $f=f^*$, we obtain:
\begin{equation}
  \label{eq501}
  -\frac{1}{2}\Phi (\tilde g)\geq \frac{1}{2}\int_D|\nabla
  u^*|^2 \md x+\frac{1}{2}\int_D{\tilde g}_*\, {u^*}^2 \md x-\int_Dfu^* \md x.
\end{equation}
Recalling that $v$ minimizes the functional
$${\cal I}(w)=\frac{1}{2}\int_D|\nabla w|^2 \md x+\frac{1}{2}\int_D{\tilde
  g}_* w^2 \md x-\int_Dfw \md x,$$
relative to $w\in H^1_0(D)$, we infer from (\ref{eq501}) that:
\begin{equation}
\label{eq502}
-\frac{1}{2}\Phi (\tilde g)\geq \frac{1}{2}\int_D|\nabla v|^2 \md x+\frac{1}{2}\int_D{\tilde g}_* v^2 \md x-\int_Dfv \md x=-\frac{1}{2}\Phi ({\tilde g}_*).
\end{equation}
As $\tilde g$ is maximal for $\Phi$, then $\Phi ({\tilde g}_*)\leq\Phi
(\tilde g)$, which together with (\ref{eq500}), (\ref{eq501}) and
(\ref{eq502}) yield:
\begin{equation*}
\int_D|\nabla u|^2 \md x=\int_D|\nabla u^*|^2 \md x.
\end{equation*}

Thus, from
Lemma~\ref{lem:Dirichlet}~(\ref{item:dirich_v_trans_vStar}), we see
that $u^{-1}(\alpha, \infty)$ is a ball for every $0\leq\alpha<M=
{\ess} \sup u.$ We now proceed to show that $u=u^*$. Recalling
Lemma~\ref{lem:Dirichlet}~(\ref{item:dirich_v_trans_vStar}), it
suffices to verify that the set $\{x\in D:\;\nabla u=0,\;0<u(x)<M\}$
is measure zero. To this end, consider $x_0\in D$, and set $S \defeq
\{u\geq u(x_0)\}$. We know that $S$ is a disk (ball), and by
continuity of $u$, $x_0\in\partial S\subseteq\{u=u(x_0)\}$.  So we can
apply the Hopf lemma (see,
{\eg},~\cite{Fraenkel:Maximum_Principles:Book:2000}), and deduce
that $\frac{\partial u}{\partial\nu}(x_0)<0$, where $\nu$ denotes the
unit outward normal vector to $\partial S$ at $x_0$. Whence, in
particular, $\nabla u(x_0)\neq 0$. Thus, in fact, $\{x\in D:\;\nabla
u=0,\;0<u(x)<M\}$ is empty, so its measure is zero, as desired. This
implies $u=u^*$, and by (\ref{eq321}), $\tilde g=\tilde \psi (u^*)$
almost everywhere in $D$. Since $\tilde \psi$ is decreasing, $\tilde
g$ is radial and non-decreasing, as claimed.

\begin{remark}
  A consequence of (\ref{euler1}) is that the larger values of $\hat
  u$ are attained where $\hat g$ is large. Whence, in case the set
  $\{\hat g=0\}$ has positive measure, it will contain a layer around
  the boundary $\partial D$, since $\hat u$ is continuous, and
  vanishes on $\partial D$. \emph{Physically, this means that in the
    construction of a robust membrane one should use the material with
    least density near the boundary}. The dual conclusion can be drawn
  similarly regarding the maximization problem (\ref{maximization}).
\end{remark}

\begin{remark}
  Note that Theorem~\ref{them:radial_sym} can be improved. Indeed, if
  $D$ is Steiner symmetric with respect to a hyperplane $l$ (see, {\eg},~\cite{Kawohl:1985}), then $\hat g$ (the solution of
  (\ref{eq:main_problem})) will also be Steiner symmetric with respect to $l$. Of
  course, in this case, one needs to use the inequality:
  \begin{equation*}
    \int_D|\nabla u|^2 \md x\geq\int_D|\nabla u^\sharp|^2 \md x,
  \end{equation*}
  instead of (\ref{eq:PZ}), in which $u^\sharp$ stands for the Steiner
  symmetrization of $u$. A similar result can be obtained for the
  maximization problem (\ref{maximization}). Of course, for the
  maximization problem we do not necessarily have uniqueness of
  optimal solutions.
\end{remark}

\section{Shape optimization}
\label{sec:shape-optimization}

In this section, we focus on the shape optimization variant of our
main problem, {\ie}, the case where the generator $g_0$ is
two-valued. Thus, we consider the following boundary value problem:

\begin{equation}
  \label{eq:bvp_alpha_beta}
  \left\{
    \arrayoptions{1ex}{1.2}
    \begin{array}{ll}
      -\Delta u+(\alpha\chi_E +\beta\chi_{E^c}) \, u=f, &\text{in}\;D, \\
      u=0, &\text{on}\;\partial D,
    \end{array}
  \right.
\end{equation}
in which, $D$ is a smooth bounded domain in $\RR^N$, $N \in \{ 2,3
\}$, $f \in L^2(D)$ is a given non-negative function,
$1\ge\alpha>\beta\ge 0$, $E$ is a measurable subset of $D$, and $E^c$
is the complement of $E$ in $D$.\footnote{To see why the assumption $1
  \geq \alpha$ is imposed, see Lemma~\ref{lem:estimate}
  \vpageref[above]{lem:estimate}.}  Denoting the unique solution of
(\ref{eq:bvp_alpha_beta}) by $u_{E}$, we are interested in the
following minimization problem:
\begin{equation}
  \label{eq:min_prob_alpha_beta}
  \inf_{|E|=\gamma}\int_Dfu_E \md x,
\end{equation}
where $0<\gamma<|D|$. By Theorem \ref{thm:main}, we know that, if $f$
satisfies {\bf A1}, then (\ref{eq:min_prob_alpha_beta}) has a unique
solution $\tilde{D}\subset D$, with $|\tilde{D}|=\gamma$. Also, we
have $\tilde{D}= \left\{x\in D:u_{\tilde{D}}(x)>c\right\}$, for some
positive $c$, which, in turn, implies:
\begin{equation*}
u_{\tilde{D}}(x)=c,\quad \text{on}\;\partial\tilde{D}.
\end{equation*}

Our aim is to analyze monotonicity and stability of solutions with
respect to the parameters $\alpha$ and $\gamma$. Analyses of this kind
are crucial for laying the foundation for computable analysis of shape
optimization problems such as (\ref{eq:bvp_alpha_beta}). 

\begin{remark}
In what follows, we keep the presentation succinct, and as such, many
of the claims will be listed with the proofs omitted. The interested
reader may refer to Sect.~3.3 of~\cite{Liu:PhD_Thesis:2015} for
the details of the omitted proofs. Nonetheless, we present the proofs
of a few of the more interesting cases.
\end{remark}

 \subsection{Monotonicity and stability results with respect to
   $\boldsymbol{\gamma}$}

 We know that, for each $0 < \gamma < |D|$, the minimization problem
 (\ref{eq:min_prob_alpha_beta}) has a unique solution. Now, consider
 $0<\gamma_1,\gamma_2<|D|$, and their corresponding unique solutions:
 \begin{equation}
   \label{eqa1}
   \tilde{D}_{\gamma_1}=\left\{x\in D: u_{\gamma_1}(x)>c_{\gamma_1}\right\}\quad\text{and}\quad
   \tilde{D}_{\gamma_2}=\left\{x\in D:
     u_{\gamma_2}(x)>c_{\gamma_2}\right\},
 \end{equation}
 for some positive $c_{\gamma_1}$ and $c_{\gamma_2}$, where
 $u_{\gamma_1}$ and $u_{\gamma_2}$ satisfy:
 \begin{equation}
   \label{bvpa3}
   \left\{
     \arrayoptions{1ex}{1.2}
     \begin{array}{ll}
       -\Delta u_{\gamma_1}+(\alpha\chi_{\tilde{D}_{\gamma_1}}
       +\beta\chi_{\tilde{D}_{\gamma_1}^c})u_{\gamma_1}=f, &\text{in}\;D, \\
       u_{\gamma_1}=0, &\text{on}\;\partial D,
     \end{array}
   \right.
\end{equation}
and
\begin{equation}
  \label{bvpa4}
  \left\{
    \arrayoptions{1ex}{1.2}
    \begin{array}{ll}
      -\Delta u_{\gamma_2}+(\alpha\chi_{\tilde{D}_{\gamma_2}}
      +\beta\chi_{\tilde{D}_{\gamma_2}^c})u_{\gamma_2}=f, &\text{in}\;D, \\
      u_{\gamma_2}=0, &\text{on}\;\partial D.
    \end{array}
  \right.
\end{equation}
We also restate the minimization problem
(\ref{eq:min_prob_alpha_beta}), with $\gamma$ as an input parameter:
\begin{equation}
  \label{mpa2}
  \Psi(\gamma) \defeq \inf_{|E|=\gamma}\int_Dfu_E \md x.
\end{equation}

\begin{proposition}
  \label{prop:monot_gamma}
  If $0<\gamma_1<\gamma_2<|D|$, then
  \begin{enumerate}[(i)]
  \item \label{item:MRa1} $c_{\gamma_1}\ge c_{\gamma_2}$.

  \item \label{item:MRa2} $\tilde{D}_{\gamma_1}\subseteq
    \tilde{D}_{\gamma_2}$.
    
  \item \label{item:MRa3} $u_{\gamma_1}> u_{\gamma_2}$ in $D$.
  
  \end{enumerate}
  
\end{proposition}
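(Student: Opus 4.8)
The plan is to prove all three items together by a coupled comparison argument, since they are intertwined: the monotonicity of the level constants, the nesting of the optimal sets, and the pointwise ordering of the state functions reinforce one another. I would start from the characterization already recorded above, namely that $\tilde D_\gamma = \{u_\gamma > c_\gamma\}$ with $u_\gamma$ solving the corresponding boundary value problem, together with the fact (from the Euler--Lagrange relation \eqref{euler1} specialized to the two-valued case) that $u_\gamma$ is itself the minimizer of the energy ${\cal I}_\gamma(w) = \int_D |\nabla w|^2 + \int_D (\alpha\chi_{\tilde D_\gamma} + \beta\chi_{\tilde D_\gamma^c}) w^2 - 2\int_D fw$ over $H^1_0(D)$, and that the coefficient function $\alpha\chi_{\tilde D_\gamma} + \beta\chi_{\tilde D_\gamma^c}$ is, by the structure of the solution, a decreasing rearrangement-type function of $u_\gamma$ — larger where $u_\gamma$ is larger. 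The key structural input is monotonicity of the coefficient: increasing $\gamma$ enlarges the high-coefficient region $\tilde D_\gamma$, which (for fixed $f$) should depress the solution everywhere.

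The core step I would carry out is a direct comparison on the difference $w \defeq u_{\gamma_1} - u_{\gamma_2}$. Subtracting the two equations \eqref{bvpa3} and \eqref{bvpa4} gives
\begin{equation*}
  -\Delta w + (\alpha\chi_{\tilde D_{\gamma_1}} + \beta\chi_{\tilde D_{\gamma_1}^c}) w
  = \bigl[(\alpha\chi_{\tilde D_{\gamma_2}} + \beta\chi_{\tilde D_{\gamma_2}^c}) - (\alpha\chi_{\tilde D_{\gamma_1}} + \beta\chi_{\tilde D_{\gamma_1}^c})\bigr] u_{\gamma_2}
  \quad\text{in } D,
\end{equation*}
with $w = 0$ on $\partial D$. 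If one knew a priori that $\tilde D_{\gamma_1} \subseteq \tilde D_{\gamma_2}$, the bracket on the right would be $(\alpha - \beta)$ times the characteristic function of $\tilde D_{\gamma_2}\setminus\tilde D_{\gamma_1}$, hence non-negative (since $\alpha > \beta$ and $u_{\gamma_2} > 0$ by the maximum principle), and then the strong maximum principle applied to the operator $-\Delta + (\text{non-negative coefficient})$ would immediately give $w > 0$ in $D$, which is item \eqref{item:MRa3}. From $u_{\gamma_1} > u_{\gamma_2}$ and the volume constraints $|\{u_{\gamma_1} > c_{\gamma_1}\}| = \gamma_1 < \gamma_2 = |\{u_{\gamma_2} > c_{\gamma_2}\}|$, one then extracts $c_{\gamma_1} \ge c_{\gamma_2}$ (item \eqref{item:MRa1}): if instead $c_{\gamma_1} < c_{\gamma_2}$, then $\{u_{\gamma_1} > c_{\gamma_1}\} \supseteq \{u_{\gamma_2} > c_{\gamma_1}\} \supseteq \{u_{\gamma_2} > c_{\gamma_2}\}$, forcing $\gamma_1 \ge \gamma_2$, a contradiction. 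And once $c_{\gamma_1} \ge c_{\gamma_2}$ together with $u_{\gamma_1} > u_{\gamma_2}$ are in hand, $\tilde D_{\gamma_1} = \{u_{\gamma_1} > c_{\gamma_1}\} \subseteq \{u_{\gamma_2} > c_{\gamma_2}\}$? — no, that inclusion does not follow trivially, so the nesting \eqref{item:MRa2} has to be obtained either first or simultaneously.

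This circularity is the main obstacle: items \eqref{item:MRa2} and \eqref{item:MRa3} each seem to need the other. The way I would break it is by an optimality/energy-comparison argument that does not presuppose the nesting. Since $\tilde D_{\gamma_1}$ is optimal for volume $\gamma_1$ and $\tilde D_{\gamma_2}$ for volume $\gamma_2 > \gamma_1$, and since the admissible class for $\gamma_1$ is obtained from that for $\gamma_2$ by ``removing mass'' from the set, one compares $\Psi(\gamma_1)$ and $\Psi(\gamma_2)$ using test sets: shrinking $\tilde D_{\gamma_2}$ to volume $\gamma_1$ by removing the part where $u_{\gamma_2}$ is smallest gives an admissible competitor for the $\gamma_1$-problem, and conversely enlarging $\tilde D_{\gamma_1}$ by adding the region where $u_{\gamma_1}$ is largest among $\tilde D_{\gamma_1}^c$ gives a competitor for the $\gamma_2$-problem; the bang-bang structure $\tilde D_\gamma = \{u_\gamma > c_\gamma\}$ says these ``greedy'' modifications are compatible with optimality only if the level sets are already nested. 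Concretely, I would argue by contradiction: if $\tilde D_{\gamma_1} \not\subseteq \tilde D_{\gamma_2}$, then there is a positive-measure set $A \subseteq \tilde D_{\gamma_1} \setminus \tilde D_{\gamma_2}$ and a positive-measure set $B \subseteq \tilde D_{\gamma_2} \setminus \tilde D_{\gamma_1}$ with $|A| = |B|$, on which $u_{\gamma_2} \le c_{\gamma_2} \le \ldots$ — and then swapping $A$ out of and $B$ into the coefficient support for the $\gamma_2$-configuration (a volume-preserving modification), combined with the derivative formula from Lemma~\ref{lem:basic_props_of_energy_func}~(\ref{item:Phi_deriv}) that shows the energy decreases when high-coefficient mass is moved to where the state is larger, contradicts the maximality/optimality characterization exactly as in the proof of Theorem~\ref{thm:main}. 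Once \eqref{item:MRa2} is secured this way, \eqref{item:MRa3} follows from the maximum principle as above, and \eqref{item:MRa1} follows from \eqref{item:MRa3} and the volume constraint as above. I expect the bookkeeping in the swap argument — ensuring the modified set is still a valid competitor and that the energy strictly decreases — to be the one genuinely delicate point; everything else is standard elliptic comparison.
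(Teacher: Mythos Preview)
The paper itself omits the proof of this proposition, referring instead to Section~3.3 of~\cite{Liu:PhD_Thesis:2015}. So there is no in-paper argument to match against; I can only assess your proposal on its own terms, using the techniques visible elsewhere in the paper (notably the proof of Theorem~\ref{thm:stabilitya1}) as a guide to what the intended method probably is.

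Your diagnosis of the circularity between \eqref{item:MRa2} and \eqref{item:MRa3} is correct, but the swap argument you propose to break it does not work. The optimality condition from Theorem~\ref{thm:main} says precisely that $\tilde D_{\gamma_2}$ is the super-level set of $u_{\gamma_2}$ of measure $\gamma_2$; equivalently, $\tilde D_{\gamma_2}$ already maximizes $\int_E u_{\gamma_2}^2$ over $|E|=\gamma_2$. If $A\subseteq\tilde D_{\gamma_1}\setminus\tilde D_{\gamma_2}$ and $B\subseteq\tilde D_{\gamma_2}\setminus\tilde D_{\gamma_1}$ with $|A|=|B|$, then on $A$ one has $u_{\gamma_2}\le c_{\gamma_2}$ while on $B$ one has $u_{\gamma_2}>c_{\gamma_2}$, so replacing $B$ by $A$ in $\tilde D_{\gamma_2}$ \emph{raises} the energy, not lowers it --- exactly what optimality predicts. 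The symmetric swap for the $\gamma_1$-problem fails for the same reason. The point is that the only information you have about $A$ and $B$ is phrased in terms of $u_{\gamma_1}$ and $u_{\gamma_2}$ separately, and each optimality statement is also phrased in terms of its own state function; the swap never forces a comparison between the two states.

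The way to break the circularity is purely PDE-theoretic, in the spirit of the paper's proof of Theorem~\ref{thm:stabilitya1}. Set $W=u_{\gamma_1}-u_{\gamma_2}$, so that $-\Delta W+g_1W=(g_2-g_1)u_{\gamma_2}$. First suppose $c_{\gamma_1}\le c_{\gamma_2}$ and work on $\Omega^+=\{W>0\}$: there, $x\in\tilde D_{\gamma_2}$ forces $u_{\gamma_1}(x)>u_{\gamma_2}(x)>c_{\gamma_2}\ge c_{\gamma_1}$, hence $x\in\tilde D_{\gamma_1}$; similarly $x\notin\tilde D_{\gamma_1}$ forces $x\notin\tilde D_{\gamma_2}$. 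Thus $(g_2-g_1)u_{\gamma_2}\le0$ on $\Omega^+$, and the maximum principle gives $W\le0$ there, so $\Omega^+=\emptyset$ and $u_{\gamma_1}\le u_{\gamma_2}$. But extending $\tilde D_{\gamma_1}$ to any superset of measure $\gamma_2$ and comparing coefficients shows $\Psi(\gamma_1)>\Psi(\gamma_2)$, contradicting $\int_D fu_{\gamma_1}\le\int_D fu_{\gamma_2}$. Hence $c_{\gamma_1}>c_{\gamma_2}$, which is \eqref{item:MRa1}. With this in hand, the mirror argument on $\Omega^-=\{W<0\}$ gives $\Omega^-=\emptyset$, i.e.\ \eqref{item:MRa3}. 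Finally, apply the maximum principle on $G=\{W>c_{\gamma_1}-c_{\gamma_2}\}$ (which avoids $\partial D$ since $c_{\gamma_1}-c_{\gamma_2}>0$): the same level-set bookkeeping shows the right-hand side is $\le0$ on $G$, forcing $G=\emptyset$; since every point of $\tilde D_{\gamma_1}\setminus\tilde D_{\gamma_2}$ would satisfy $W>c_{\gamma_1}-c_{\gamma_2}$, this set is empty, giving \eqref{item:MRa2}.
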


Since $f$ is non-negative and non-trivial, the following is an easy
consequence of Proposition \ref{prop:monot_gamma} (\ref{item:MRa3}).

\begin{corollary}
  \label{cor:MRa4}
  $\Psi(\gamma)$ is a decreasing function on $(0,|D|)$.
\end{corollary}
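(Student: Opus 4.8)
The plan is to reduce the statement directly to Proposition~\ref{prop:monot_gamma}(\ref{item:MRa3}), which already carries all the analytic weight. First I would recall the set-up preceding~(\ref{mpa2}): by Theorem~\ref{thm:main} (applied to the two-valued generator $g_0 = \alpha\chi_E + \beta\chi_{E^c}$), for each $\gamma\in(0,|D|)$ the infimum defining $\Psi(\gamma)$ is attained at the unique optimal set $\tilde D_\gamma$, so that
\[
  \Psi(\gamma) = \int_D f u_{\tilde D_\gamma}\,\md x = \int_D f u_\gamma\,\md x,
\]
where $u_\gamma = u_{\tilde D_\gamma}$ is the corresponding solution. Thus evaluating $\Psi$ at a point is the same as plugging the optimizer into the energy; this interchange needs no extra argument beyond the cited existence/uniqueness.

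Next, fix $0<\gamma_1<\gamma_2<|D|$. By Proposition~\ref{prop:monot_gamma}(\ref{item:MRa3}) we have $u_{\gamma_1} > u_{\gamma_2}$ in $D$, hence $u_{\gamma_1} - u_{\gamma_2} > 0$ throughout $D$. Since $f$ is non-negative and non-trivial, the set $\{x\in D : f(x)>0\}$ has positive Lebesgue measure, so the integrand $f\,(u_{\gamma_1}-u_{\gamma_2})$ is non-negative and strictly positive on a set of positive measure. Therefore
\[
  \Psi(\gamma_1) - \Psi(\gamma_2) = \int_D f\,(u_{\gamma_1}-u_{\gamma_2})\,\md x > 0,
\]
and since $\gamma_1<\gamma_2$ were arbitrary, $\Psi$ is decreasing on $(0,|D|)$ (indeed strictly).

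There is no genuine obstacle in this corollary: the real work lies in Proposition~\ref{prop:monot_gamma}(\ref{item:MRa3}), whose proof rests on comparing the elliptic problems (\ref{bvpa3})--(\ref{bvpa4}) via the maximum principle, using the ordering of the level constants and of the optimal sets from parts~(\ref{item:MRa1})--(\ref{item:MRa2}). If only the non-strict statement were wanted, even $u_{\gamma_1}\ge u_{\gamma_2}$ would suffice; strictness, which follows from the strict inequality in~(\ref{item:MRa3}) together with $f\not\equiv 0$, is what justifies calling $\Psi$ a decreasing function here and matches the ``easy consequence'' remark preceding the corollary.
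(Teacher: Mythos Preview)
Your argument is correct and is exactly the ``easy consequence'' the paper has in mind: the paper does not spell out a proof but simply attributes the corollary to Proposition~\ref{prop:monot_gamma}(\ref{item:MRa3}) together with $f$ being non-negative and non-trivial, which is precisely what you do.
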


\begin{theorem}
  \label{thm:continuity_gamma}
  If $\gamma_1$ tends to $\gamma_2$ in $(0,|D|)$, then $u_{\gamma_1}$
  converges to $u_{\gamma_2}$ in $C(\bar{D})$. Moreover,
  $c_{\gamma_1}$ converges to $c_{\gamma_2}$, where
  $c_{\gamma_1}=u_{\gamma_1}(\partial \tilde{D}_{\gamma_1})$ and
  $c_{\gamma_2}=u_{\gamma_2}(\partial \tilde{D}_{\gamma_2})$.
\end{theorem}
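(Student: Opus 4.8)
\textbf{Proof plan for Theorem~\ref{thm:continuity_gamma}.}
The plan is to establish continuity by a compactness-and-uniqueness argument. Suppose $\gamma_1 \to \gamma_2$ in $(0,|D|)$. First I would record uniform bounds: since $0 \le \alpha\chi_{\tilde D_{\gamma_1}} + \beta\chi_{\tilde D_{\gamma_1}^c} \le 1$ and $f \in L^2(D)$ is fixed, the standard energy estimate applied to (\ref{bvpa3}) gives $\|u_{\gamma_1}\|_{H^1_0(D)} \le C\|f\|_{L^2(D)}$ with $C$ independent of $\gamma_1$; elliptic regularity ($W^{2,2}$ estimates, valid since $D$ is smooth and $N \in \{2,3\}$) then bounds $\|u_{\gamma_1}\|_{W^{2,2}(D)}$ uniformly, and Sobolev embedding gives a uniform bound in $C^{0,\theta}(\bar D)$ for some $\theta > 0$. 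Hence $\{u_{\gamma_1}\}$ is precompact in $C(\bar D)$.

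Next I would take an arbitrary sequence $\gamma_1^{(k)} \to \gamma_2$ and extract a subsequence along which $u_{\gamma_1^{(k)}} \to w$ in $C(\bar D)$, and simultaneously $\chi_{\tilde D_{\gamma_1^{(k)}}} \rightharpoonup \rho$ weakly-$*$ in $L^\infty(D)$ for some $\rho$ with $0 \le \rho \le 1$ and $\int_D \rho\,\md x = \gamma_2$. Passing to the limit in the weak formulation of (\ref{bvpa3}), $w$ solves $-\Delta w + (\alpha\rho + \beta(1-\rho))w = f$ with $w = 0$ on $\partial D$. The crucial step is to upgrade $\rho$ to a genuine characteristic function $\chi_{\tilde D_{\gamma_2}}$ and to identify $w = u_{\gamma_2}$. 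Here I would invoke the structure from Theorem~\ref{thm:main}: each $\tilde D_{\gamma_1^{(k)}} = \{u_{\gamma_1^{(k)}} > c_{\gamma_1^{(k)}}\}$ is a super-level set. The constants $c_{\gamma_1^{(k)}}$ lie in the compact range of $w$ up to the uniform convergence, so a further subsequence gives $c_{\gamma_1^{(k)}} \to c$; then uniform convergence forces $\chi_{\{u_{\gamma_1^{(k)}} > c_{\gamma_1^{(k)}}\}} \to \chi_{\{w > c\}}$ a.e., provided the level set $\{w = c\}$ has measure zero. To secure this last point I would use Lemma~\ref{lem:estimate} (no significant flat sections of $u_g$ on $S(g)$) together with A1, exactly as in the proof of Theorem~\ref{thm:main}, to rule out a fat level set at the free-boundary value. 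Consequently $\rho = \chi_{\{w>c\}}$, $|\{w>c\}| = \gamma_2$, and $w$ solves (\ref{eq:bvp_alpha_beta}) with $E = \{w > c\}$ of measure $\gamma_2$, i.e. $w$ is an admissible competitor.

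It then remains to see that $w$ is in fact \emph{the} optimizer $u_{\gamma_2}$. I would argue via the energy functional: by weak continuity of $\Phi$ (Lemma~\ref{lem:basic_props_of_energy_func}~(\ref{item:weakly_continuous})) and optimality of each $\tilde D_{\gamma_1^{(k)}}$ for $\Psi(\gamma_1^{(k)})$, combined with the continuity of $\Psi$ itself (which follows from Corollary~\ref{cor:MRa4}, monotonicity, plus a squeezing argument using Proposition~\ref{prop:monot_gamma}~(\ref{item:MRa3}) to get $\Psi(\gamma_1^{(k)}) \to \Psi(\gamma_2)$), we get $\int_D f w\,\md x = \lim_k \int_D f u_{\gamma_1^{(k)}}\,\md x = \lim_k \Psi(\gamma_1^{(k)}) = \Psi(\gamma_2)$, so $w$ attains the infimum; uniqueness of the minimizer from Theorem~\ref{thm:main} gives $w = u_{\gamma_2}$ and $c = c_{\gamma_2}$. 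Since every subsequence has a further subsequence converging to the same limit $u_{\gamma_2}$, the full family converges: $u_{\gamma_1} \to u_{\gamma_2}$ in $C(\bar D)$ and $c_{\gamma_1} \to c_{\gamma_2}$.

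\textbf{Main obstacle.} The delicate step is the passage from the weak-$*$ limit $\rho$ of the characteristic functions back to a characteristic function, i.e. showing no ``mushy region'' develops in the limit. This is precisely where the super-level-set representation $\tilde D_\gamma = \{u_\gamma > c_\gamma\}$ and the no-flat-sections property (Lemma~\ref{lem:estimate}) are indispensable; without them one could only conclude convergence to a relaxed density. A secondary technical point is establishing continuity of $\Psi$ at $\gamma_2$ — I would obtain it by monotonicity (Corollary~\ref{cor:MRa4}) together with a lower/upper bound sandwich built from nearby admissible sets, for instance enlarging or shrinking $\tilde D_{\gamma_2}$ by a small collar to produce competitors of measure $\gamma_1^{(k)}$ whose energies converge to $\Psi(\gamma_2)$.
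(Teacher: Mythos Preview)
The paper actually omits the proof of this theorem, deferring to the thesis~\cite{Liu:PhD_Thesis:2015}. Nevertheless, the placement of the result immediately after Proposition~\ref{prop:monot_gamma}, together with the way the nesting $\tilde D_{\gamma_1}\subseteq\tilde D_{\gamma_2}$ and equation~(\ref{bvpa5}) are exploited in the very next proof (Theorem~\ref{thm:differentiability_Psi_gamma}), make the intended argument clear: treat one-sided limits, use Proposition~\ref{prop:monot_gamma}~(\ref{item:MRa2}) to obtain $\|\chi_{\tilde D_{\gamma_1}}-\chi_{\tilde D_{\gamma_2}}\|_{L^2}^2=|\gamma_1-\gamma_2|$, plug this into the energy estimate for~(\ref{bvpa5}) to get $\|u_{\gamma_1}-u_{\gamma_2}\|_{H^1_0}\to 0$, and then bootstrap to $C(\bar D)$ via $W^{2,2}$ regularity and Sobolev embedding. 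Convergence of $c_\gamma$ then follows from its monotonicity (Proposition~\ref{prop:monot_gamma}~(\ref{item:MRa1})), the uniform convergence of $u_\gamma$, and Lemma~\ref{lem:estimate}. In particular, Corollary~\ref{cor:continuity_Psi_gamma} is a \emph{consequence}, not an input.

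Your compactness-and-uniqueness route is workable but noticeably heavier, because you forgo precisely the tool the paper sets up for this purpose. The ``mushy region'' obstacle you flag as the main difficulty simply does not arise once you invoke Proposition~\ref{prop:monot_gamma}~(\ref{item:MRa2}): nested sets give monotone, hence strong, convergence of $\chi_{\tilde D_{\gamma_1}}$, so there is no relaxed density $\rho$ to upgrade. Likewise, your plan to establish continuity of $\Psi$ by an independent sandwich argument, in order to identify the limit via optimality, is a detour the paper avoids by arguing directly on the PDE. Finally, there is a soft spot in your identification step: you invoke Lemma~\ref{lem:estimate} for the limit $w$ to conclude $|\{w=c\}|=0$, but that lemma only excludes flat sections on $S(g)$ with $g=\alpha\rho+\beta(1-\rho)$; when $\beta=0$ you have not shown the putative flat piece sits inside $\{\rho>0\}$. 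The monotonicity-based argument sidesteps this issue entirely.
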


\begin{corollary}
  \label{cor:continuity_Psi_gamma}
  $\Psi(\gamma)$ is continuous on $(0,|D|)$.
\end{corollary}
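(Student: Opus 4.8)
The plan is to derive the continuity of $\Psi$ directly from Theorem~\ref{thm:continuity_gamma}. Recall from~\eqref{mpa2} that $\Psi(\gamma)=\int_D f u_{\gamma}\,\md x$, where $u_\gamma=u_{\tilde D_\gamma}$ is the (unique) optimal state associated with the volume constraint $|E|=\gamma$. Fix $\gamma_2\in(0,|D|)$ and let $\gamma_1\to\gamma_2$ within $(0,|D|)$. By Theorem~\ref{thm:continuity_gamma}, $u_{\gamma_1}\to u_{\gamma_2}$ in $C(\bar D)$, hence in particular $u_{\gamma_1}\to u_{\gamma_2}$ in $L^2(D)$ (since $D$ is bounded, uniform convergence implies $L^2$ convergence).

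First I would write
\begin{equation*}
  \absn{\Psi(\gamma_1)-\Psi(\gamma_2)}
  =
  \absn{\int_D f\,(u_{\gamma_1}-u_{\gamma_2})\,\md x}
  \le
  \|f\|_{L^2(D)}\,\|u_{\gamma_1}-u_{\gamma_2}\|_{L^2(D)},
\end{equation*}
using the Cauchy--Schwarz inequality and $f\in L^2(D)$. Since the right-hand side tends to $0$ as $\gamma_1\to\gamma_2$, we conclude $\Psi(\gamma_1)\to\Psi(\gamma_2)$, which is exactly continuity of $\Psi$ at $\gamma_2$. As $\gamma_2$ was arbitrary, $\Psi$ is continuous on $(0,|D|)$. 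Alternatively, one could bypass $L^2$ and bound $\absn{\Psi(\gamma_1)-\Psi(\gamma_2)}\le \|f\|_{L^1(D)}\,\|u_{\gamma_1}-u_{\gamma_2}\|_{C(\bar D)}$ directly, which is even cleaner given that the convergence in Theorem~\ref{thm:continuity_gamma} is stated in $C(\bar D)$.

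There is essentially no obstacle here: the corollary is a one-line consequence of the already-established $C(\bar D)$-convergence of the states together with the linearity of $\Psi$ in $u_\gamma$ and the integrability of $f$. The only minor point to be careful about is that the conclusion of Theorem~\ref{thm:continuity_gamma} is an interior statement — $\gamma_1,\gamma_2$ both range over the open interval $(0,|D|)$ — so the continuity we obtain is on $(0,|D|)$ and we make no claim about boundary behaviour as $\gamma\to 0^+$ or $\gamma\to|D|^-$. Combined with Corollary~\ref{cor:MRa4}, which gives monotonicity, this shows $\Psi$ is a continuous decreasing function on $(0,|D|)$, which is the form in which these regularity properties will be useful for the computable-analysis programme mentioned in the introduction.
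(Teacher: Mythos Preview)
Your argument is correct and is exactly the intended one: the paper states this corollary without proof precisely because it follows immediately from Theorem~\ref{thm:continuity_gamma} via the estimate $|\Psi(\gamma_1)-\Psi(\gamma_2)|\le \|f\|_{L^1(D)}\|u_{\gamma_1}-u_{\gamma_2}\|_{C(\bar D)}$ (or the $L^2$ version you wrote). There is nothing to add.
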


  From Corollary \ref{cor:MRa4} we infer that $\Psi(\gamma)$ is
  differentiable almost everywhere. However, the following theorem
  shows that it is actually continuously differentiable on $(0,|D|)$.

\begin{theorem}
  \label{thm:differentiability_Psi_gamma}
  $\Psi(\gamma)$ is continuously differentiable on $(0,|D|)$.
  Moreover:
  \begin{equation*}
\Psi^{'}(\gamma)=-(\alpha-\beta)c_\gamma^2  ,
  \end{equation*}
  in which $c_\gamma=u_{\gamma}(\partial \tilde{D}_\gamma)$.
\end{theorem}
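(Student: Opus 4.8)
The plan is to establish the differentiability formula via the first variation of the energy functional, treating $\Psi(\gamma)$ as an optimal value function. The starting point is the observation that, by Theorem~\ref{thm:main}, for each $\gamma$ the optimal set $\tilde D_\gamma$ is a superlevel set of $u_\gamma$, and the corresponding density $g_\gamma \defeq \alpha\chi_{\tilde D_\gamma} + \beta\chi_{\tilde D_\gamma^c}$ minimizes $\Phi$ over the rearrangement class of $g_0 = \alpha\chi_{E_0}+\beta\chi_{E_0^c}$ with $|E_0|=\gamma$. So $\Psi(\gamma) = \Phi(g_\gamma)$. The key computational device is Lemma~\ref{lem:basic_props_of_energy_func}~(\ref{item:Phi_deriv}): the directional derivative of $\Phi$ at $g$ in direction $h-g$ is $-\int_D (h-g)u_g^2 \md x$.

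First I would fix $\gamma$ and a small increment $\delta$, and compare $g_\gamma$ with $g_{\gamma+\delta}$. Using the optimality of each as a minimizer of the relevant linearized functional (as in the proof of Theorem~\ref{thm:local_minim}), together with the variational characterization~(\ref{eq:variational_formulation}), one gets two-sided estimates sandwiching $\Psi(\gamma+\delta)-\Psi(\gamma)$ between $-\int_D (g_{\gamma+\delta}-g_\gamma) u_\gamma^2 \md x$ and $-\int_D (g_{\gamma+\delta}-g_\gamma) u_{\gamma+\delta}^2 \md x$. Now $g_{\gamma+\delta}-g_\gamma = (\alpha-\beta)(\chi_{\tilde D_{\gamma+\delta}} - \chi_{\tilde D_\gamma})$, and by Proposition~\ref{prop:monot_gamma}~(\ref{item:MRa2}) the sets are nested, so $\chi_{\tilde D_{\gamma+\delta}} - \chi_{\tilde D_\gamma} = \chi_{\tilde D_{\gamma+\delta}\setminus \tilde D_\gamma}$ is the characteristic function of a thin annular region of measure exactly $\delta$ (for $\delta>0$; similarly for $\delta<0$ with signs reversed). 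On this thin region, $u_\gamma$ is close to the value $c_\gamma$ it takes on $\partial\tilde D_\gamma$, since $u_\gamma$ is continuous and $\tilde D_\gamma = \{u_\gamma > c_\gamma\}$. Hence $-\int_D (g_{\gamma+\delta}-g_\gamma)u_\gamma^2\md x = -(\alpha-\beta)\int_{\tilde D_{\gamma+\delta}\setminus\tilde D_\gamma} u_\gamma^2 \md x = -(\alpha-\beta)(c_\gamma^2 + o(1))\,\delta$ as $\delta\to 0^+$. Dividing by $\delta$ and passing to the limit gives $\Psi'(\gamma) = -(\alpha-\beta)c_\gamma^2$; the same computation with $u_{\gamma+\delta}^2$ on the shrinking region yields the same limit by Theorem~\ref{thm:continuity_gamma} (which gives $u_{\gamma+\delta}\to u_\gamma$ uniformly and $c_{\gamma+\delta}\to c_\gamma$), so the two-sided sandwich collapses. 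Finally, continuity of $\gamma\mapsto c_\gamma$ (Theorem~\ref{thm:continuity_gamma}) upgrades differentiability to continuous differentiability.

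**The hard part will be** making rigorous the claim that $\int_{\tilde D_{\gamma+\delta}\setminus\tilde D_\gamma} u_\gamma^2\md x = (c_\gamma^2 + o(1))\,\delta$, {\ie}, controlling the oscillation of $u_\gamma$ over the thin region $\tilde D_{\gamma+\delta}\setminus\tilde D_\gamma$. This requires knowing that as $\delta\to 0$ this region collapses onto the level set $\{u_\gamma = c_\gamma\}$, which in turn uses that $|\{u_\gamma = c_\gamma\}| = 0$ (no significant flat sections, from Lemma~\ref{lem:estimate} applied on $S(g_\gamma)=D$ since $\beta$ may be positive, or a direct argument on the level set near the free boundary) so that $|\tilde D_{\gamma+\delta}| - |\tilde D_\gamma| = \delta$ forces $\tilde D_{\gamma+\delta}\setminus\tilde D_\gamma$ into an arbitrarily thin neighborhood of $\{u_\gamma=c_\gamma\}$ in measure, where $u_\gamma^2$ is uniformly close to $c_\gamma^2$. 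One must also handle the sign of $\delta$ symmetrically: for $\delta<0$ one works with $\tilde D_\gamma\setminus\tilde D_{\gamma+\delta}$ and the value $c_\gamma$ is approached from the region just inside the free boundary. Once this measure-theoretic continuity of the superlevel sets is in hand, the rest is the sandwich argument, which is essentially a repetition of the estimates already carried out in the proof of Theorem~\ref{thm:local_minim}.
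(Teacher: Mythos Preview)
Your approach is correct and runs close to the paper's, but the central computation is organized differently. The paper does not use the convexity sandwich; instead it subtracts the PDEs for $u_{\gamma_1}$ and $u_{\gamma_2}$, multiplies by $u_{\gamma_1}+u_{\gamma_2}$, and after some algebra obtains the \emph{exact identity}
\[
\Psi(\gamma_1)-\Psi(\gamma_2)=(\alpha-\beta)\int_{\tilde D_{\gamma_2}\setminus\tilde D_{\gamma_1}} u_{\gamma_1}u_{\gamma_2}\,\md x,
\]
which it then bounds pointwise using $c_{\gamma_2}<u_{\gamma_2}<u_{\gamma_1}\le c_{\gamma_1}$ on the annulus (from Proposition~\ref{prop:monot_gamma}). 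Your sandwich $-\int (g_{\gamma+\delta}-g_\gamma)u_\gamma^2 \le \Psi(\gamma+\delta)-\Psi(\gamma)\le -\int (g_{\gamma+\delta}-g_\gamma)u_{\gamma+\delta}^2$ follows directly from the variational characterization~(\ref{eq:variational_formulation}) (no need to invoke optimality within a rearrangement class, since $g_\gamma$ and $g_{\gamma+\delta}$ lie in different classes), and leads to the same limit via the same pointwise bounds. The paper's route yields a sharper intermediate formula; yours is more portable, relying only on convexity of $\Phi$.

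One comment on what you flag as the ``hard part'': it is in fact immediate. On $\tilde D_{\gamma+\delta}\setminus\tilde D_\gamma$ you have $u_\gamma\le c_\gamma$ (you are outside $\{u_\gamma>c_\gamma\}$) and $u_\gamma>u_{\gamma+\delta}>c_{\gamma+\delta}$ (by Proposition~\ref{prop:monot_gamma}~(\ref{item:MRa3}) and the definition of $\tilde D_{\gamma+\delta}$). Thus $c_{\gamma+\delta}<u_\gamma\le c_\gamma$ pointwise on the annulus, and similarly for $u_{\gamma+\delta}$; combined with $c_{\gamma+\delta}\to c_\gamma$ from Theorem~\ref{thm:continuity_gamma}, this gives the $o(1)$ control directly. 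No appeal to absence of flat sections or measure-theoretic collapse of level sets is needed.
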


\begin{proof} Fix $0<\gamma_2<|D|$, and let $\gamma_1$ increase to
  $\gamma_2$. We claim that
  $\frac{\Psi(\gamma_1)-\Psi(\gamma_2)}{\gamma_1-\gamma_2}$ converges
  to $-(\alpha-\beta)c_{\gamma_2}^2$. From (\ref{bvpa3}) and (\ref{bvpa4}), we deduce
  \begin{equation}
    \label{bvpa5}
    \left\{
      \arrayoptions{1ex}{1.2}
      \begin{array}{ll}
        -\Delta (u_{\gamma_1}-u_{\gamma_2})+(\alpha\chi_{\tilde{D}_{\gamma_1}}
        +\beta\chi_{\tilde{D}_{\gamma_1}^c})(u_{\gamma_1}-u_{\gamma_2})=
        -(\alpha-\beta)u_{\gamma_2}(\chi_{\tilde{D}_{\gamma_1}}-\chi_{\tilde{D}_{\gamma_2}}) &\text{in}\;D \\
        u_{\gamma_1}-u_{\gamma_2}=0 &\text{on}\;\partial D.
      \end{array}
    \right.
  \end{equation}
  Multiplying the differential equation in (\ref{bvpa5}) by
  $u_{\gamma_1}+u_{\gamma_2}$, integrating the result over $D$,
  followed by an application of divergence theorem, in conjunction
  with $\tilde{D}_{\gamma_1}\subseteq \tilde{D}_{\gamma_2}$
  (Proposition \ref{prop:monot_gamma} (\ref{item:MRa2})) yields:
{\arrayoptions{0.5ex}{1.1}
\begin{eqnarray}
  \label{eqa11}
  \int_D(|\nabla u_{\gamma_1}|^2-|\nabla u_{\gamma_2}|^2 )\,  \md x&+&\int_D(\alpha\chi_{\tilde{D}_{\gamma_1}} +
  \beta\chi_{\tilde{D}_{\gamma_1}^c})(u_{\gamma_1}^2-u_{\gamma_2}^2)\,  \md x \nonumber \\ &=&-(\alpha-\beta)\int_D u_{\gamma_2}(u_{\gamma_1}+
  u_{\gamma_2})(\chi_{\tilde{D}_{\gamma_1}}-\chi_{\tilde{D}_{\gamma_2}})\,  \md x\nonumber \\
  &=& (\alpha-\beta)\int_D u_{\gamma_2}(u_{\gamma_1}+u_{\gamma_2})\chi_{\tilde{D}_{\gamma_2}\setminus\tilde{D}_{\gamma_1}}\,  \md x\nonumber \\&=
  &
  (\alpha-\beta)\int_{\tilde{D}_{\gamma_2}\setminus\tilde{D}_{\gamma_1}}
  u_{\gamma_2}(u_{\gamma_1}+u_{\gamma_2})\,  \md x.
\end{eqnarray}}
Furthermore, from (\ref{bvpa3}), (\ref{bvpa4}), and (\ref{eqa11}), we
deduce:
\begin{multline}
  \label{eqa12}
  \Psi(\gamma_1)-\Psi(\gamma_2)=\int_D u_{\gamma_1}f\,  \md x-\int_D u_{\gamma_2}f\,  \md x \\
   = \left[\int_D|\nabla u_{\gamma_1}|^2 \,  \md x +\int_D(\alpha
    \chi_{\tilde{D}_{\gamma_1}}+ \beta
    \chi_{\tilde{D}_{\gamma_1}^c})u_{\gamma_1}^2\,  \md x\right] -\left[\int_D|\nabla
    u_{\gamma_2}|^2 \,  \md x  +\int_D(\alpha \chi_{\tilde{D}_{\gamma_2}}+
    \beta
    \chi_{\tilde{D}_{\gamma_2}^c})u_{\gamma_2}^2\,  \md x\right] \\
  = \int_D(|\nabla u_{\gamma_1}|^2-|\nabla u_{\gamma_2}|^2 )\,  \md x
  +
  \int_D(\alpha\chi_{\tilde{D}_{\gamma_1}}+\beta\chi_{\tilde{D}_{\gamma_2}^c})(u_{\gamma_1}^2-u_{\gamma_2}^2)\,
   \md x + \int_{\tilde{D}_{\gamma_2}\setminus \tilde{D}_{\gamma_1}}(\beta
  u_{\gamma_1}^2-\alpha u_{\gamma_2}^2)\,  \md x \\
 = (\alpha-
  \beta)\int_{\tilde{D}_{\gamma_2}\setminus\tilde{D}_{\gamma_1}}
  u_{\gamma_2}(u_{\gamma_1}+u_{\gamma_2})\,  \md x-\beta
  \int_{\tilde{D}_{\gamma_2}\setminus\tilde{D}_{\gamma_1}}(u_{\gamma_1}^2-u_{\gamma_2}^2)\,
   \md x +\int_{\tilde{D}_{\gamma_2}\setminus
    \tilde{D}_{\gamma_1}}(\beta u_{\gamma_1}^2-\alpha
  u_{\gamma_2}^2)\,  \md x \\
  = (\alpha-\beta)\int_{\tilde{D}_{\gamma_2}\setminus
    \tilde{D}_{\gamma_1}} u_{\gamma_2}u_{\gamma_1}\,  \md x.
\end{multline}
where we have used the fact that $\tilde{D}_{\gamma_1}\subseteq
\tilde{D}_{\gamma_2}$ in the third and fourth equality, and also
applied (\ref{eqa11}) in the fourth equality. By using (\ref{eqa12})
and the fact that
$|\tilde{D}_{\gamma_2}\setminus\tilde{D}_{\gamma_1}|=\gamma_2-\gamma_1$,
we calculate:
\arrayoptions{0.5ex}{1.3}
\begin{eqnarray}
  \label{eqa13}
  \left|\frac{\Psi(\gamma_1)-\Psi(\gamma_2)}{\gamma_1-\gamma_2}-\left[-(\alpha-\beta)c_{\gamma_2}^2\right]\right|
  &=&\frac{\alpha-\beta}{\gamma_2-\gamma_1}\left|\int_{\tilde{D}_{\gamma_2}\setminus\tilde{D}_{\gamma_1}} (u_{\gamma_2}u_{\gamma_1}-
    c_{\gamma_2}^2) \md x\right|\nonumber \\ &\le&
  (\alpha-\beta)\left\|u_{\gamma_2}u_{\gamma_1}-c_{\gamma_2}^2\right\|
  _{\infty,\tilde{D}_{\gamma_2}\setminus\tilde{D}_{\gamma_1}}.
\end{eqnarray}
By (\ref{eqa1}) and Proposition \ref{prop:monot_gamma} (\ref{item:MRa3}),
in $\tilde{D}_{\gamma_2}\setminus\tilde{D}_{\gamma_1}$ we have
$c_{\gamma_2}< u_{\gamma_2}<u_{\gamma_1}\le c_{\gamma_1}$. So, by
applying Theorem \ref{thm:continuity_gamma} we infer:
\begin{equation*}
  \left\|u_{\gamma_2}u_{\gamma_1}-c_{\gamma_2}^2\right\|
  _{\infty,\tilde{D}_{\gamma_2}\setminus\tilde{D}_{\gamma_1}}\le |c_{\gamma_1}^2-c_{\gamma_2}^2|,
\end{equation*}
which converges to zero. From (\ref{eqa13}), we obtain the desired
result.

Similarly, when $\gamma_1$ decreases to $\gamma_2$, the ratio
$\frac{\Psi(\gamma_1)- \Psi(\gamma_2)}{\gamma_1-\gamma_2}$ converges
to $-(\alpha-\beta)c_{\gamma_2}^2$. By Theorem \ref{thm:continuity_gamma}
we know that $c_\gamma$ is continuous with respect to $\gamma$. Hence,
we infer that $\Psi(\gamma)$ is continuously differentiable with
$\Psi^{'}(\gamma)=-(\alpha-\beta)c_\gamma^2$ on $(0,|D|)$.
\end{proof}

\subsection{Monotonicity and stability results with respect to $\alpha$}

Assume that $0\le\beta<\alpha_1,\alpha_2\le 1$. For each of $\alpha_1$
and $\alpha_2$, the minimization problem (\ref{eq:min_prob_alpha_beta}) has a unique
solution, which we denote by $\tilde{D}_{\alpha_1}$ and
$\tilde{D}_{\alpha_2}$, respectively. We know that
$|\tilde{D}_{\alpha_1}|=|\tilde{D}_{\alpha_2}|=\gamma$, and:
\begin{equation}
  \label{eqa14}
  \tilde{D}_{\alpha_1}=\left\{x\in D: u_{\alpha_1}(x)>c_{\alpha_1}\right\}\quad\text{and}\quad \tilde{D}_{\alpha_2}=
  \left\{x\in D: u_{\alpha_2}(x)>c_{\alpha_2}\right\},
\end{equation}
for some positive $c_{\alpha_1}$ and $c_{\alpha_2}$, where
$u_{\alpha_1}$ and $u_{\alpha_2}$ satisfy:
\begin{equation}
  \label{bvpa6}
  \left\{
    \arrayoptions{0.5ex}{1.2}
    \begin{array}{ll}
      -\Delta
      u_{\alpha_1}+(\alpha_1\chi_{\tilde{D}_{\alpha_1}} +
      \beta\chi_{\tilde{D}_{\alpha_1}^c})u_{\alpha_1}=f, &\text{in}\;D, \\
      u_{\alpha_1}=0, &\text{on}\;\partial D,
    \end{array}
  \right.
\end{equation}
and
\begin{equation}
  \label{bvpa7}
  \left\{
    \arrayoptions{0.5ex}{1.2}
    \begin{array}{ll}
     -\Delta
      u_{\alpha_2}+(\alpha_2\chi_{\tilde{D}_{\alpha_2}} +
      \beta\chi_{\tilde{D}_{\alpha_2}^c})u_{\alpha_2}=f, &\text{in}\;D, \\
      u_{\alpha_2}=0, &\text{on}\;\partial D.
    \end{array}
  \right.
\end{equation}
%


This time, we restate the minimization problem
(\ref{eq:min_prob_alpha_beta}), with $\alpha$ as an input parameter:
\begin{equation}
  \label{mpa3}
  \Psi(\alpha) \defeq \inf_{|E|=\gamma}\int_Dfu_{E,\alpha}\,  \md x=\int_Dfu_{\alpha}
  \,  \md x.
\end{equation}

\begin{proposition}
\label{prop:monot-alpha}
  If $0<\alpha_1<\alpha_2\le 1$, then:

  \begin{enumerate}[(i)]
  \item \label{item:MRa5} $c_{\alpha_1}> c_{\alpha_2}$.
  \item $u_{\alpha_1}>u_{\alpha_2}$ in $D$.
  \item $\tilde{D}_{\alpha_1}\cap \tilde{D}_{\alpha_2}\neq \emptyset$.
  \end{enumerate}
\end{proposition}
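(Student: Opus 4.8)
The three claims are intertwined, so I would prove them in the order (ii) $\Rightarrow$ (i) $\Rightarrow$ (iii), all powered by the Euler--Lagrange description obtained in the proof of Theorem~\ref{thm:main}. Write $g_i\defeq\beta+(\alpha_i-\beta)\chi_{\tilde D_{\alpha_i}}$, so that $u_{\alpha_i}$ solves $-\Delta u_{\alpha_i}+g_iu_{\alpha_i}=f$ in $D$, $u_{\alpha_i}=0$ on $\partial D$. Since $|\tilde D_{\alpha_i}|=\gamma$, both $\chi_{\tilde D_{\alpha_1}}$ and $\chi_{\tilde D_{\alpha_2}}$ lie in the class $\{\chi_E:|E|=\gamma\}$, and the optimality of each $\tilde D_{\alpha_i}$ for (\ref{eq:min_prob_alpha_beta}) says (exactly as in the proof of Theorem~\ref{thm:main}, using $\alpha_i>\beta$) that $\tilde D_{\alpha_i}$ maximizes $E\mapsto\int_E u_{\alpha_i}^2\,\md x$ over that class. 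Taking $\tilde D_{\alpha_j}$ as a competitor yields the two cross inequalities $\int_{\tilde D_{\alpha_1}}u_{\alpha_1}^2\,\md x\ge\int_{\tilde D_{\alpha_2}}u_{\alpha_1}^2\,\md x$ and $\int_{\tilde D_{\alpha_2}}u_{\alpha_2}^2\,\md x\ge\int_{\tilde D_{\alpha_1}}u_{\alpha_2}^2\,\md x$, the analogues here of (\ref{eq3456}). I would also record once and for all that, by {\bf A1} and $\alpha_i\le1$, each $u_{\alpha_i}$ is positive and strictly superharmonic in $D$ (cf.\ the proofs of Lemma~\ref{lem:estimate} and Theorem~\ref{them:radial_sym}) and has no significant flat sections, so each $c_{\alpha_i}$ is unambiguous and $t\mapsto|\{u_{\alpha_i}>t\}|$ is strictly decreasing.

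For (ii), set $w\defeq u_{\alpha_1}-u_{\alpha_2}$; subtracting (\ref{bvpa7}) from (\ref{bvpa6}) gives $-\Delta w+g_1w=(g_2-g_1)u_{\alpha_2}$ in $D$, $w=0$ on $\partial D$, where $g_2-g_1=(\alpha_2-\beta)\chi_{\tilde D_{\alpha_2}}-(\alpha_1-\beta)\chi_{\tilde D_{\alpha_1}}$ is non-negative off $\tilde D_{\alpha_1}\setminus\tilde D_{\alpha_2}$. I would work on the open set $\Omega\defeq\{w<0\}$: if $\Omega\cap(\tilde D_{\alpha_1}\setminus\tilde D_{\alpha_2})=\emptyset$ then on $\Omega$ one has $-\Delta w=(g_2-g_1)u_{\alpha_2}+g_1|w|\ge0$, so $w$ is superharmonic in $\Omega$ with vanishing trace on $\partial\Omega$, forcing $\Omega=\emptyset$; whereas at a point of $\Omega\cap(\tilde D_{\alpha_1}\setminus\tilde D_{\alpha_2})$ one would have simultaneously $u_{\alpha_1}>c_{\alpha_1}$, $u_{\alpha_2}\le c_{\alpha_2}$ and $u_{\alpha_1}<u_{\alpha_2}$, forcing $c_{\alpha_1}<c_{\alpha_2}$. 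Hence (ii) holds \emph{unless} $c_{\alpha_1}<c_{\alpha_2}$, and excluding this last possibility is the crux of the whole proposition. I expect to rule it out as follows: if $c_{\alpha_1}<c_{\alpha_2}$ then $\tilde D_{\alpha_2}\setminus\tilde D_{\alpha_1}\subseteq\{w<0\}$, the ordering $c_{\alpha_1}<u_{\alpha_1},u_{\alpha_2}\le c_{\alpha_2}$ holds throughout $\tilde D_{\alpha_1}\setminus\tilde D_{\alpha_2}$, and there $-\Delta w=\beta u_{\alpha_2}-\alpha_1u_{\alpha_1}$; running the maximum principle on $\Omega$ together with this interface information should contradict the maximality of $\tilde D_{\alpha_1}$ (or of $\tilde D_{\alpha_2}$) via the cross inequalities. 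Once $w\ge0$ is secured, $w\not\equiv0$ (otherwise $g_1u_{\alpha_1}=g_2u_{\alpha_2}$ with $u_{\alpha_1}=u_{\alpha_2}>0$ would give $g_1=g_2$, impossible since $\alpha_1\ne\alpha_2$ and $|\tilde D_{\alpha_i}|=\gamma>0$), and the strong maximum principle applied to $-\Delta w+g_1w\ge0$ upgrades this to $w>0$ in $D$.

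Granting (ii), part (i) is short: since $u_{\alpha_1}>u_{\alpha_2}$ in $D$ and $u_{\alpha_2}=c_{\alpha_2}$ on the non-empty interface $\partial\tilde D_{\alpha_2}$, the open set $\{u_{\alpha_1}>c_{\alpha_2}\}$ contains $\overline{\tilde D_{\alpha_2}}$, hence $|\{u_{\alpha_1}>c_{\alpha_2}\}|>|\tilde D_{\alpha_2}|=\gamma=|\{u_{\alpha_1}>c_{\alpha_1}\}|$, and strict monotonicity of the super-level sets of $u_{\alpha_1}$ gives $c_{\alpha_1}>c_{\alpha_2}$. For (iii) I would argue by contradiction: if $2\gamma\le|D|$ and $\tilde D_{\alpha_1}\cap\tilde D_{\alpha_2}=\emptyset$, then on $\tilde D_{\alpha_2}$ one has $u_{\alpha_1}\le c_{\alpha_1}$, while (ii) and the definition of $\tilde D_{\alpha_2}$ give $u_{\alpha_1}>u_{\alpha_2}>c_{\alpha_2}$ there, so $\tilde D_{\alpha_2}\subseteq\{c_{\alpha_2}<u_{\alpha_1}\le c_{\alpha_1}\}$; feeding $\tilde D_{\alpha_1}$ and $\tilde D_{\alpha_2}$ into each other's maximality conditions and applying the Hopf lemma along $\partial\tilde D_{\alpha_1}$ and $\partial\tilde D_{\alpha_2}$ (in the spirit of the radial-maximization argument in Section~\ref{sec:exist-uniq-optim}) then forces the two upper-level sets to meet, contradicting disjointness.

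The step I expect to be genuinely delicate is the exclusion of $c_{\alpha_1}<c_{\alpha_2}$ flagged in part (ii). The difficulty is structural: unlike the monotonicity with respect to $\gamma$ (Proposition~\ref{prop:monot_gamma}), where the optimal sets are nested, varying $\alpha$ moves the optimal region, so there is no pointwise ordering of $g_1$ and $g_2$ and the comparison principle cannot be applied directly; all the effort goes into showing that the displacement of the optimal region is small enough that the ordering of the thresholds $c_{\alpha_1},c_{\alpha_2}$ — and hence of the states $u_{\alpha_1},u_{\alpha_2}$ — is preserved.
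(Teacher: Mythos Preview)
The paper does not include its own proof of this proposition; it is one of the results whose proof is deferred to~\cite[Sec.~3.3]{Liu:PhD_Thesis:2015}. So there is no in-text argument to compare against, and your proposal has to be assessed on its own merits.

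Your deduction of (i) from (ii) is clean and correct: once $u_{\alpha_1}>u_{\alpha_2}$ in $D$, the open set $\{u_{\alpha_1}>c_{\alpha_2}\}$ contains the compact set $\{u_{\alpha_2}\ge c_{\alpha_2}\}$, hence has measure strictly larger than $\gamma$, and the elementary monotonicity of super-level sets forces $c_{\alpha_1}>c_{\alpha_2}$ (no ``strict decrease'' of $t\mapsto|\{u_{\alpha_1}>t\}|$ is needed here, just the trivial inclusion). Your reduction in (ii), showing that $w\ge 0$ unless $c_{\alpha_1}<c_{\alpha_2}$, is also correct.

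The genuine gap is that you do not \emph{prove} the step you yourself flag as the crux. Ruling out $c_{\alpha_1}<c_{\alpha_2}$ is precisely where the argument has to do real work, and your text reads ``I expect to rule it out as follows~\dots\ should contradict the maximality'', with no concrete mechanism indicated. The cross inequalities you record are compatible with $c_{\alpha_1}<c_{\alpha_2}$ and with $\Omega\cap(\tilde D_{\alpha_1}\setminus\tilde D_{\alpha_2})\neq\emptyset$; a bare maximum-principle argument on $\Omega$ fails because $(g_2-g_1)u_{\alpha_2}$ is genuinely negative on $\tilde D_{\alpha_1}\setminus\tilde D_{\alpha_2}$, and nothing you have written excludes that region from~$\Omega$. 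Multiplying the equation for $w$ by $w^-$ and integrating likewise leaves a sign-indefinite right-hand side. In short, the proposal identifies the obstacle but does not overcome it; as written, (ii) is not established.

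The same comment applies to (iii). Under the assumption $\tilde D_{\alpha_1}\cap\tilde D_{\alpha_2}=\emptyset$, the two cross inequalities $\int_{\tilde D_{\alpha_i}}u_{\alpha_i}^2\ge\int_{\tilde D_{\alpha_j}}u_{\alpha_i}^2$ are automatically satisfied (since each $\tilde D_{\alpha_i}$ is the top super-level set of $u_{\alpha_i}$), so ``feeding them into each other'' produces no contradiction. Invoking the Hopf lemma ``in the spirit of'' the radial discussion does not by itself force the sets to meet; you would need to say precisely which boundary point, which domain, and which inequality you are extracting. As written, (iii) is also only a sketch.

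If you want to keep your overall strategy, the cleanest route around the gap in (ii) is to insert the auxiliary comparison the paper uses in Theorem~\ref{thm:stabilitya1}: let $\hat u_{\alpha_1}$ solve (\ref{bvpa10}), so that $\hat u_{\alpha_1}>u_{\alpha_2}$ by a direct maximum-principle comparison (same set $\tilde D_{\alpha_2}$, smaller coefficient), and then compare $u_{\alpha_1}$ with $\hat u_{\alpha_1}$ using the optimality of $\tilde D_{\alpha_1}$ for the $\alpha_1$-problem. That two-step comparison is what replaces the missing pointwise ordering of $g_1$ and $g_2$, and it is the natural place to look for a complete argument.
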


\begin{theorem}
  \label{thm:stabilitya1}
  If $\beta>0$ and $\alpha_1$ converges to $\alpha_2$ in $(\beta,1]$,
  then $|\tilde{D}_{\alpha_1}\vartriangle\tilde{D}_{\alpha_2}|$
  converges to zero.
\end{theorem}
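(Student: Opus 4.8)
The plan is to prove the symmetric-difference convergence by combining two ingredients: a uniform continuity estimate for the state functions $u_{\alpha}$ with respect to $\alpha$, and the fact that the free-boundary level $c_{\alpha}$ stays bounded away from both $0$ and the maximum of $u$. First I would write down the PDE satisfied by the difference $w \defeq u_{\alpha_1}-u_{\alpha_2}$, which, exactly as in the derivation of \eqref{bvpa5}, is of the form
\begin{equation*}
  -\Delta w + (\alpha_1\chi_{\tilde{D}_{\alpha_1}}+\beta\chi_{\tilde{D}_{\alpha_1}^c})\,w
  = -(\alpha_1-\alpha_2)\,u_{\alpha_2}\,\chi_{\tilde{D}_{\alpha_2}}
    -(\alpha_1\chi_{\tilde{D}_{\alpha_1}}-\alpha_2\chi_{\tilde{D}_{\alpha_2}}+\beta\chi_{\tilde{D}_{\alpha_1}^c}-\beta\chi_{\tilde{D}_{\alpha_2}^c})u_{\alpha_2},
\end{equation*}
with zero boundary data; a cleaner route is to keep the zeroth-order coefficient of $u_{\alpha_1}$ on the left and move everything else to the right, so that the right-hand side is $-(\alpha_1-\alpha_2)u_{\alpha_2}\chi_{\tilde{D}_{\alpha_2}} + (\alpha_2-\beta)u_{\alpha_2}(\chi_{\tilde{D}_{\alpha_1}}-\chi_{\tilde{D}_{\alpha_2}})$. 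Testing with $w$ itself, using non-negativity of the zeroth-order coefficient, $L^\infty$-bounds on the $u_{\alpha_i}$ (which follow from elliptic estimates and the fixed data $f$), and the fact that $|\tilde{D}_{\alpha_1}\vartriangle\tilde{D}_{\alpha_2}|\le|D|$, I would obtain an $H^1_0$-bound on $w$, and then bootstrap via elliptic regularity (as in Theorem~\ref{thm:continuity_gamma}) to get $\|u_{\alpha_1}-u_{\alpha_2}\|_{C(\bar D)}\to 0$ as $\alpha_1\to\alpha_2$; one must argue that the $\chi_{\tilde{D}_{\alpha_1}}-\chi_{\tilde{D}_{\alpha_2}}$ term is controlled, which is where the estimate is genuinely coupled.

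The crux is then the following geometric observation. Write $\tilde{D}_{\alpha_i}=\{u_{\alpha_i}>c_{\alpha_i}\}$. By Proposition~\ref{prop:monot-alpha}\,(\ref{item:MRa5}) and its continuity refinement, $c_{\alpha_1}\to c_{\alpha_2}$ as $\alpha_1\to\alpha_2$ (this follows once the $C(\bar D)$ convergence of $u_{\alpha_1}$ to $u_{\alpha_2}$ is established, together with $|\tilde{D}_{\alpha_i}|=\gamma$ fixed: the level $c_{\alpha}$ is determined by the distribution function of $u_\alpha$, which converges uniformly). Now for $\varepsilon>0$ small,
\begin{equation*}
  \tilde{D}_{\alpha_1}\setminus\tilde{D}_{\alpha_2}
  \subseteq \{u_{\alpha_2}\le c_{\alpha_2}\}\cap\{u_{\alpha_1}>c_{\alpha_1}\}
  \subseteq \{c_{\alpha_2}-\varepsilon < u_{\alpha_2}\le c_{\alpha_2}\}
\end{equation*}
once $\|u_{\alpha_1}-u_{\alpha_2}\|_\infty$ and $|c_{\alpha_1}-c_{\alpha_2}|$ are both below $\varepsilon/2$; symmetrically for $\tilde{D}_{\alpha_2}\setminus\tilde{D}_{\alpha_1}$. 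Hence $|\tilde{D}_{\alpha_1}\vartriangle\tilde{D}_{\alpha_2}|\le |\{c_{\alpha_2}-\varepsilon < u_{\alpha_2}\le c_{\alpha_2}+\varepsilon\}|$. Because $u_{\alpha_2}$ has no significant flat sections on $S(\hat g)=\tilde D_{\alpha_2}\cup\tilde D_{\alpha_2}^c$ at the level $c_{\alpha_2}$ — indeed Lemma~\ref{lem:estimate} guarantees $|\{u_{\alpha_2}=c_{\alpha_2}\}|=0$ since $c_{\alpha_2}>0$ lies in the range where the solution is real-analytic / has no flat parts — the measure of this thin level band tends to $0$ as $\varepsilon\to0$, by continuity of the (finite) measure $\varepsilon\mapsto|\{c_{\alpha_2}-\varepsilon<u_{\alpha_2}\le c_{\alpha_2}+\varepsilon\}|$ from above at $\varepsilon=0$.

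Putting the pieces together: given $\delta>0$, first pick $\varepsilon>0$ so that the level band around $c_{\alpha_2}$ has measure $<\delta$; then pick $\alpha_1$ close enough to $\alpha_2$ that $\|u_{\alpha_1}-u_{\alpha_2}\|_{C(\bar D)}<\varepsilon/2$ and $|c_{\alpha_1}-c_{\alpha_2}|<\varepsilon/2$; the inclusions above then give $|\tilde{D}_{\alpha_1}\vartriangle\tilde{D}_{\alpha_2}|<\delta$, which is the claim. I expect the main obstacle to be making the first step — the $C(\bar D)$ convergence $u_{\alpha_1}\to u_{\alpha_2}$ — fully rigorous, because the right-hand side of the difference equation contains the term $(\chi_{\tilde{D}_{\alpha_1}}-\chi_{\tilde{D}_{\alpha_2}})$, so one cannot simply decouple the estimate; the clean way around this is to note that $u_{\alpha_2}$ is bounded, so this term contributes at most $(\alpha_2-\beta)\|u_{\alpha_2}\|_\infty\,|\tilde{D}_{\alpha_1}\vartriangle\tilde{D}_{\alpha_2}|^{1/2}$ in $L^2$, giving an $H^1_0$-estimate of the shape $\|w\|_{H^1_0}\lesssim |\alpha_1-\alpha_2|+|\tilde{D}_{\alpha_1}\vartriangle\tilde{D}_{\alpha_2}|^{1/2}$; then one uses a compactness/subsequence argument (every sequence $\alpha_1^{(k)}\to\alpha_2$ has a subsequence along which $u$ converges in $C(\bar D)$ and $\tilde D$ converges in measure, and the limit must be $u_{\alpha_2},\tilde D_{\alpha_2}$ by uniqueness) to upgrade to full convergence, exactly in the spirit of the proof of Theorem~\ref{thm:continuity_gamma}.
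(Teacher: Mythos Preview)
Your strategy is sound but takes a genuinely different route from the paper, and the phrase ``the limit must be $u_{\alpha_2},\tilde D_{\alpha_2}$ by uniqueness'' hides the one step that needs work. The paper never tries to control $u_{\alpha_1}-u_{\alpha_2}$ directly; instead it introduces an \emph{auxiliary} state $\hat u_{\alpha_1}$ solving the PDE with parameter $\alpha_1$ on the \emph{fixed} set $\tilde D_{\alpha_2}$. This decouples the estimate entirely: $\hat u_{\alpha_1}\to u_{\alpha_2}$ in $C(\bar D)$ is immediate, so the super-level set $\hat D_{\alpha_1}=\{\hat u_{\alpha_1}>c_{\alpha_2}\}$ contains $\tilde D_{\alpha_2}$ and has measure tending to $\gamma$. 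A maximum-principle argument on the set $\{u_{\alpha_1}-\hat u_{\alpha_1}>c_{\alpha_1}-c_{\alpha_2}\}$, using $c_{\alpha_1}>c_{\alpha_2}$ from Proposition~\ref{prop:monot-alpha}, then forces $\tilde D_{\alpha_1}\subseteq\hat D_{\alpha_1}$ as well; since both $\tilde D_{\alpha_i}$ have measure $\gamma$ and sit inside a set shrinking to measure $\gamma$, the symmetric difference vanishes. No compactness and no appeal to the uniqueness of the minimizer are used. Your compactness route works too, but to identify the subsequential limit you must show that the weak limit $g_\infty=(\alpha_2-\beta)h+\beta\in\overline{\mathcal R(\alpha_2)}$ actually \emph{minimizes} $\Phi$ there: this needs $\limsup\Psi(\alpha_1^{(k)})\le\Psi(\alpha_2)$ (test with the fixed competitor $\tilde D_{\alpha_2}$), after which Theorem~\ref{thm:main} identifies $h=\chi_{\tilde D_{\alpha_2}}$ and weak convergence of characteristic functions of fixed mass upgrades to strong. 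Once this is spelled out your argument is cleaner and more robust (it does not use the monotonicity of $c_\alpha$), at the price of being non-constructive and leaning on the full uniqueness theorem; the paper's auxiliary-function trick is more hands-on and yields explicit inclusions.

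One small slip: you write $S(\hat g)=\tilde D_{\alpha_2}\cup\tilde D_{\alpha_2}^c$, which is trivially $D$; what you actually need is that $\beta>0$ makes the coefficient strictly positive everywhere, so that Lemma~\ref{lem:estimate} rules out flat sections on all of $D$ and not merely on $\tilde D_{\alpha_2}$. That is exactly where the hypothesis $\beta>0$ enters, in both your argument and the paper's.
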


\begin{proof} Fix $\beta<\alpha_2\le1$ and let $\alpha_1$ increase to
  $\alpha_2$.  We claim that
  $|\tilde{D}_{\alpha_1}\vartriangle\tilde{D}_{\alpha_2}|$ converges
  to zero. First, let us introduce the following auxiliary boundary
  value problem
  \begin{equation}
    \label{bvpa10}
    \left\{
      \arrayoptions{0.5ex}{1.3}
      \begin{array}{ll}
        -\Delta
        \hat{u}_{\alpha_1}+(\alpha_1\chi_{\tilde{D}_{\alpha_2}} +
        \beta\chi_{\tilde{D}_{\alpha_2}^c})\hat{u}_{\alpha_1}=f &\text{in}\;D, \\
        \hat{u}_{\alpha_1}=0 &\text{on}\;\partial D.
      \end{array}
    \right.
\end{equation}
From (\ref{bvpa7}) and (\ref{bvpa10}), we deduce:
\begin{equation}
  \label{bvpa11}
  \left\{
    \arrayoptions{0.5ex}{1.3}
    \begin{array}{ll} -\Delta
      (\hat{u}_{\alpha_1}-u_{\alpha_2})+
      (\alpha_1\chi_{\tilde{D}_{\alpha_2}} +\beta\chi_{\tilde{D}_{\alpha_2}^c})(\hat{u}_{\alpha_1}-u_{\alpha_2})=
      (\alpha_2-\alpha_1)u_{\alpha_2}\chi_{\tilde{D}_{\alpha_2}} &\text{in}\;D, \\
      \hat{u}_{\alpha_1}-u_{\alpha_2}=0 &\text{on}\;\partial D.
    \end{array}
  \right.
\end{equation}
Since $\alpha_2>\alpha_1$, we infer that
$(\alpha_2-\alpha_1)u_{\alpha_2}\chi_{\tilde{D}_{\alpha_2}}$ is
non-negative.  So, by applying the strong maximum principle to
(\ref{bvpa11}), we obtain $\hat{u}_{\alpha_1}>u_{\alpha_2}$ in
$D$. Furthermore, by (\ref{eqa14}), we have:
\begin{equation}
\label{eqa20}
\hat{D}_{\alpha_1} = \left\{x\in D: \hat{u}_{\alpha_1}(x)>c_{\alpha_2}\right\}\supseteq \left\{x\in D:
  u_{\alpha_2}(x)>c_{\alpha_2}\right\}=\tilde{D}_{\alpha_2}.
\end{equation}
Multiplying the differential equation in (\ref{bvpa11}) by
$\hat{u}_{\alpha_1}-u_{\alpha_2}$, integrating the result over $D$,
followed by an application of divergence theorem yields:
\arrayoptions{0.5ex}{1.3}
\begin{eqnarray}
  \label{eqa21}
  \int_D|\nabla(\hat{u}_{\alpha_1}-u_{\alpha_2})|^2 dx&+&\int_D(\alpha_1\chi_{\tilde{D}_{\alpha_2}} +
  \beta\chi_{\tilde{D}_{\alpha_2}^c})(\hat{u}_{\alpha_1}-u_{\alpha_2})^2dx\nonumber \\ &=&(\alpha_2-\alpha_1)\int_D u_{\alpha_2}(\hat{u}_{\alpha_1}-
  u_{\alpha_2})\chi_{\tilde{D}_{\alpha_2}}dx\nonumber \\
  &\le & (\alpha_2-\alpha_1)\left\|u_{\alpha_2}\right\|_4\left\|\hat{u}_{\alpha_1}
    -u_{\alpha_2}\right\|_4|\tilde{D}_{\alpha_2}|^{\frac{1}{2}}\nonumber\\
  &\le & C(\alpha_2-\alpha_1)\left\|u_{\alpha_2}\right\|_{H_0^1(D)}\left\|\hat{u}_{\alpha_1}
    -u_{\alpha_2}\right\|_{H_0^1(D)}|\tilde{D}_{\alpha_2}|^{\frac{1}{2}},
\end{eqnarray}
where we have used general H{\"o}lder's inequality in the first
inequality, and Sobolev embedding theorem in the second
inequality. Since the second term of the first line of (\ref{eqa21})
is non-negative, we obtain:
\begin{equation}
  \label{eqa22}
  \left\|\hat{u}_{\alpha_1}
    -u_{\alpha_2}\right\|_{H_0^1(D)}\le
  C(\alpha_2-\alpha_1)\left\|u_{\alpha_2}\right\|_{H_0^1(D)}
  |\tilde{D}_{\alpha_2}|^{\frac{1}{2}}.
\end{equation}
Noting that $\alpha_1$ increases to $\alpha_2$, we infer
$\hat{u}_{\alpha_1}$ converges to $u_{\alpha_2}$ in $H_0^1(D)$.  By
using elliptic regularity theory and Sobolev embedding theorem, we
infer $\hat{u}_{\alpha_1}$ converges to $u_{\alpha_2}$ in
$C(\bar{D})$. So, from (\ref{eqa20}) and the fact that
$|\tilde{D}_{\alpha_2}|=\gamma$, in conjunction with Lemma
\ref{lem:estimate}, we deduce that
$|\hat{D}_{\alpha_1}\setminus\tilde{D}_{\alpha_2}|$ decreases to zero,
and
\begin{equation}
  \label{eqa24}
  |\hat{D}_{\alpha_1}| \to \gamma^+.
\end{equation}
On the other hand, from (\ref{bvpa6}) and (\ref{bvpa10}), we have:
\begin{equation}
  \label{bvpa12}
  -\Delta
  (u_{\alpha_1}-\hat{u}_{\alpha_1})+(\alpha_1\chi_{\tilde{D}_{\alpha_1}}
  +
  \beta\chi_{\tilde{D}_{\alpha_1}^c})(u_{\alpha_1}-\hat{u}_{\alpha_1})
  =(\alpha_1-\beta)\hat{u}_{\alpha_1}
  (\chi_{\tilde{D}_{\alpha_2}\setminus\tilde{D}_{\alpha_1}}-\chi_{\tilde{D}_{\alpha_1}\setminus\tilde{D}_{\alpha_2}})\quad
  \text{in } D,
\end{equation}
with $u_{\alpha_1}-\hat{u}_{\alpha_1}=0$ on $\partial D$. Now, let us
introduce the following subsets of $D$:
\begin{equation*}
  \left\{
  \arrayoptions{0.5ex}{1.3}
  \begin{array}{lcl}
    \hat{E} & \defeq &\left\{x\in D:u_{\alpha_1}(x)-\hat{u}_{\alpha_1}(x)\le c_{\alpha_1}-c_{\alpha_2}\right\},\\
\hat{F} & \defeq &\left\{x\in D:u_{\alpha_1}(x)-\hat{u}_{\alpha_1}(x)> c_{\alpha_1}-c_{\alpha_2}\right\}.
  \end{array}
  \right.
\end{equation*}
Using (\ref{eqa14}) and (\ref{eqa20}), we infer
$\tilde{D}_{\alpha_1}\setminus\hat{D}_{\alpha_1}\subseteq \hat{F}$ and
$\hat{D}_{\alpha_1}\setminus\tilde{D}_{\alpha_1}\subseteq
\hat{E}$. Moreover, by (\ref{eqa20}), we have $\hat{F}=
(\hat{E})^c\subseteq(\hat{D}_{\alpha_1}\setminus\tilde{D}_{\alpha_1})^c\subseteq
(\tilde{D}_{\alpha_2}\setminus\tilde{D}_{\alpha_1})^c$. So,
(\ref{bvpa12}) leads to:
\begin{equation}
  \label{eqa23}
  -\Delta
  (u_{\alpha_1}-\hat{u}_{\alpha_1})+(\alpha_1\chi_{\tilde{D}_{\alpha_1}}
  +
  \beta\chi_{\tilde{D}_{\alpha_1}^c\cap \tilde{D}_{\alpha_2}^c})(u_{\alpha_1}-\hat{u}_{\alpha_1})
  =-(\alpha_1-\beta)\hat{u}_{\alpha_1}
  \chi_{\tilde{D}_{\alpha_1}\setminus\tilde{D}_{\alpha_2}}
  \quad\text{in }
  \hat{F}\subseteq(\tilde{D}_{\alpha_2}\setminus\tilde{D}_{\alpha_1})^c.
\end{equation}
Since $c_{\alpha_1}> c_{\alpha_2}$ (by
Proposition~\ref{prop:monot-alpha} (\ref{item:MRa5})), we have
$u_{\alpha_1}-\hat{u}_{\alpha_1}=c_{\alpha_1}-c_{\alpha_2}>0$ on
$\partial \hat{F}$. By applying the maximum principle to
(\ref{eqa23}), we deduce $u_{\alpha_1}-\hat{u}_{\alpha_1}\le
c_{\alpha_1}-c_{\alpha_2}$ in $\hat{F}$. Recalling the definition of
$\hat{F}$, we have $\hat{F}=\emptyset$. Since
$\tilde{D}_{\alpha_1}\setminus\hat{D}_{\alpha_1}\subseteq \hat{F}$, we
infer $\tilde{D}_{\alpha_1}\setminus\hat{D}_{\alpha_1}=\emptyset$,
{\ie}~$\tilde{D}_{\alpha_1}\subseteq\hat{D}_{\alpha_1}$. So, from
(\ref{eqa24}) and the fact that $|\tilde{D}_{\alpha_1}|=\gamma$, we
deduce $|\hat{D}_{\alpha_1}\setminus\tilde{D}_{\alpha_1}|$ decreases
to zero. Furthermore, recalling that
$|\hat{D}_{\alpha_1}\setminus\tilde{D}_{\alpha_2}|$ decreases to zero,
from (\ref{eqa20}) we have:
\begin{equation*}
|\tilde{D}_{\alpha_1}\vartriangle\tilde{D}_{\alpha_2}|=|(\tilde{D}_{\alpha_1}\setminus\tilde{D}_{\alpha_2})\cup
(\tilde{D}_{\alpha_2}\setminus\tilde{D}_{\alpha_1})|\le
|\hat{D}_{\alpha_1}\setminus\tilde{D}_{\alpha_2}|+
|\hat{D}_{\alpha_1}\setminus\tilde{D}_{\alpha_1}| \to 0^+,
\end{equation*}
when $\alpha_1$ increases to $\alpha_2$ as desired. Similarly, when
$\alpha_1$ decreases to $\alpha_2$, with $\beta<\alpha_2<1$, we will
have $|\tilde{D}_{\alpha_1}\vartriangle\tilde{D}_{\alpha_2}|$
converging to zero. This completes the proof.
\end{proof}

\begin{theorem}
\label{continuitya3} 
If $\beta>0$ and $\alpha_1$ converges to $\alpha_2$ in $(\beta,1]$,
then $u_{\alpha_1}$ converges to $u_{\alpha_2}$ in $C(\bar{D})$.
\end{theorem}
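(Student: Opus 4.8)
The plan is to estimate the difference $w \defeq u_{\alpha_1} - u_{\alpha_2}$ directly from the two boundary value problems (\ref{bvpa6}) and (\ref{bvpa7}), leaning on the stability of the optimal sets already established in Theorem~\ref{thm:stabilitya1}. Write $q_i \defeq \beta + (\alpha_i - \beta)\chi_{\tilde D_{\alpha_i}}$ for $i = 1,2$, so that $q_i$ is exactly the coefficient appearing in (\ref{bvpa6})--(\ref{bvpa7}). Subtracting the two equations gives
\begin{equation*}
  -\Delta w + q_1 w = (q_2 - q_1)\, u_{\alpha_2} \quad \text{in } D, \qquad w = 0 \quad \text{on } \partial D .
\end{equation*}
The point is that the zeroth-order perturbation is small in $L^2(D)$: from $q_2 - q_1 = (\alpha_2 - \alpha_1)\chi_{\tilde D_{\alpha_2}} + (\alpha_1 - \beta)\bigl(\chi_{\tilde D_{\alpha_2}} - \chi_{\tilde D_{\alpha_1}}\bigr)$ one gets the pointwise bound $|q_2 - q_1| \le |\alpha_2 - \alpha_1| + \chi_{\tilde D_{\alpha_1} \vartriangle \tilde D_{\alpha_2}}$, hence
\begin{equation*}
  \|q_2 - q_1\|_{L^2(D)} \le |\alpha_2 - \alpha_1|\,|D|^{1/2} + |\tilde D_{\alpha_1} \vartriangle \tilde D_{\alpha_2}|^{1/2} \longrightarrow 0,
\end{equation*}
the second term vanishing by Theorem~\ref{thm:stabilitya1}. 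Since $u_{\alpha_2} \in C(\bar D)$ (itself a consequence of elliptic regularity applied to (\ref{bvpa7}), as $f \in L^2(D)$ and the coefficient is bounded), we conclude $\|(q_2 - q_1) u_{\alpha_2}\|_{L^2(D)} \to 0$.

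From here I would run a two-step elliptic estimate. First, multiplying the equation for $w$ by $w$ and integrating, the term $\int_D q_1 w^2 \md x$ is non-negative, so Cauchy--Schwarz together with the Poincar\'e inequality yields $\|w\|_{H^1_0(D)} \le C \|(q_2 - q_1)u_{\alpha_2}\|_{L^2(D)} \to 0$; in particular $\|w\|_{L^2(D)} \to 0$, and therefore $\|q_1 w\|_{L^2(D)} \to 0$ as well, using $0 \le q_1 \le 1$. Second, rewrite the equation as $-\Delta w = (q_2 - q_1)u_{\alpha_2} - q_1 w$, whose right-hand side now tends to $0$ in $L^2(D)$; the $H^2$-regularity estimate on the smooth bounded domain $D$ gives $\|w\|_{H^2(D)} \le C\bigl(\|\Delta w\|_{L^2(D)} + \|w\|_{L^2(D)}\bigr) \to 0$. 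Finally, because $N \in \{2,3\}$, the Sobolev embedding $H^2(D) \hookrightarrow C(\bar D)$ gives $\|w\|_{C(\bar D)} \le C \|w\|_{H^2(D)} \to 0$, which is the assertion. The same chain of inequalities applies verbatim whether $\alpha_1$ increases or decreases to $\alpha_2$, so it suffices to treat one-sided limits, exactly as in the proof of Theorem~\ref{thm:stabilitya1}.

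I do not anticipate a genuine obstacle: the statement is, at bottom, a perturbation estimate for a linear elliptic equation whose bounded, non-negative zeroth-order coefficient varies with $\alpha$. The only step that is not bookkeeping is the $L^2$-smallness of $q_2 - q_1$, and that is precisely what Theorem~\ref{thm:stabilitya1} provides --- which is also where the hypothesis $\beta > 0$ is really needed; all the elliptic estimates downstream use only $q_1 \ge 0$ and the smoothness of $D$.
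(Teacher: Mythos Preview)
Your argument is correct. The paper omits its own proof of this theorem (referring instead to \cite{Liu:PhD_Thesis:2015}), but your approach --- subtract the two equations, control the right-hand side in $L^2$ via Theorem~\ref{thm:stabilitya1}, then bootstrap through $H^1_0 \to H^2 \to C(\bar D)$ using elliptic regularity and Sobolev embedding for $N\le 3$ --- is exactly the template the paper uses elsewhere (see the passage ``By using elliptic regularity theory and Sobolev embedding theorem, we infer $\hat u_{\alpha_1}$ converges to $u_{\alpha_2}$ in $C(\bar D)$'' inside the proof of Theorem~\ref{thm:stabilitya1}).
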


\begin{corollary}\label{continuitya4} If $\beta>0$ and $\alpha_1$ converges to $\alpha_2$ in $(\beta,1]$, then $\Psi(\alpha_1)$ converges to
$\Psi(\alpha_2)$.
\end{corollary}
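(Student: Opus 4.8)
The plan is to deduce this directly from Theorem~\ref{continuitya3}, so the corollary is essentially a one-line consequence. Recall that by~(\ref{mpa3}) we have $\Psi(\alpha) = \int_D f u_\alpha \, \md x$, so the difference is simply
\begin{equation*}
  \Psi(\alpha_1) - \Psi(\alpha_2) = \int_D f \, (u_{\alpha_1} - u_{\alpha_2}) \, \md x .
\end{equation*}
First I would estimate this integral. Since $D$ is bounded and $f \in L^2(D)$, the Cauchy--Schwarz inequality yields
\begin{equation*}
  \left| \Psi(\alpha_1) - \Psi(\alpha_2) \right| \le \left\| f \right\|_{L^2(D)} \left\| u_{\alpha_1} - u_{\alpha_2} \right\|_{L^2(D)} \le \left\| f \right\|_{L^2(D)} \, |D|^{1/2} \, \left\| u_{\alpha_1} - u_{\alpha_2} \right\|_{C(\bar D)} .
\end{equation*}

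Next I would invoke Theorem~\ref{continuitya3}: as $\alpha_1 \to \alpha_2$ in $(\beta, 1]$, we have $\left\| u_{\alpha_1} - u_{\alpha_2} \right\|_{C(\bar D)} \to 0$, while the prefactor $\left\| f \right\|_{L^2(D)} \, |D|^{1/2}$ is a fixed finite constant. Hence the right-hand side above tends to zero, so $\Psi(\alpha_1) \to \Psi(\alpha_2)$, which is the assertion.

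Since all of the genuine analytic content is already packaged in Theorem~\ref{continuitya3} (and, behind it, Theorem~\ref{thm:stabilitya1} together with the elliptic-regularity and Sobolev-embedding arguments used there), there is no real obstacle in this proof; the only minor point requiring a moment's care is confirming that $f$ pairs well against the uniformly bounded family $\{ u_{\alpha_1} \}$, which is immediate from $f \in L^2(D)$ on the bounded domain $D$. One could equally well state the estimate using $\left\| f \right\|_{L^1(D)}$ and the sup-norm of the displacement; I prefer the $L^2$-version above as it keeps the notation aligned with the rest of the paper.
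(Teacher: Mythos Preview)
Your proof is correct and is exactly the intended argument: the paper states this corollary without proof precisely because it follows immediately from Theorem~\ref{continuitya3} via the identity $\Psi(\alpha)=\int_D f u_\alpha\,\md x$ and a trivial H{\"o}lder estimate, just as you have written.
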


\section{Numerical simulation}
\label{sec:numerical-simulation}

Numerical algorithms for solving rearrangement optimization problems
have appeared in the literature (See,
{\eg}, \cite{Elcrat:iter_Steady_Vortices_Rearrangement:1995,Emamizadeh_Farjudian_Zivari:Nonlocal_Kirchhoff:2016,Emamizadeh_Farjudian_Liu-Harvesting:2017}). As
there are no non-global local minima for problem
(\ref{eq:main_problem}), a simple gradient descent algorithm
suffices. Thus, we do not discuss the details of the algorithm here.

Nonetheless, we highlight a few issues regarding numerical simulation
of the problem (\ref{eq:main_problem}). It is clear from the
variational formulation (\ref{eq:variational_formulation}) that the
optimization problem (\ref{eq:main_problem}) is a minmax one. Speeding
up algorithms for rearrangement problems of this kind requires dealing
with certain heuristics, which are discussed in detail by Kao and
Su~\cite{Kao_Su:Efficient_Rearrangement_Algorithms:2013}.

The optimization problem (\ref{eq:main_problem}) of the current paper
should be contrasted with (say) the optimal harvesting problem
of \cite{Emamizadeh_Farjudian_Liu-Harvesting:2017}, or the steady
vortex problem considered
in \cite{Burton:1989,Burton:saddle:1989}. Here are two major
differences:

\begin{enumerate}[(1)]
\item Whereas the steady vortex and optimal harvesting problems can
  have uncountably many local optima and saddle points---which may
  only be \emph{partially} overcome through the use of randomized
  algorithms~\cite{Emamizadeh_Farjudian_Liu-Harvesting:2017}---problem
  (\ref{eq:main_problem}) has no non-global local minima.
\item On the other hand, the maxmax nature of the steady vortex
  problem and the minmin nature of the optimal harvesting problem
  provide for highly efficient algorithms that generate optimizing
  sequences. For problem (\ref{eq:main_problem}), however, careful use
  of heuristics is needed.
\end{enumerate}

Using an approach similar to that
of \cite{Kao_Su:Efficient_Rearrangement_Algorithms:2013}, we have
implemented an algorithm for the shape optimization problem
(\ref{eq:min_prob_alpha_beta}). Figure~\ref{fig:dumbbell} (generated
by MATLAB{\textsuperscript{\textregistered}}) illustrates one of our
monotonicity results, as stated in Proposition \ref{prop:monot_gamma}
(\ref{item:MRa2}).

\begin{figure}
  \centering
  \begin{subfigure}[b]{0.3\textwidth}
    \includegraphics[width=\textwidth]{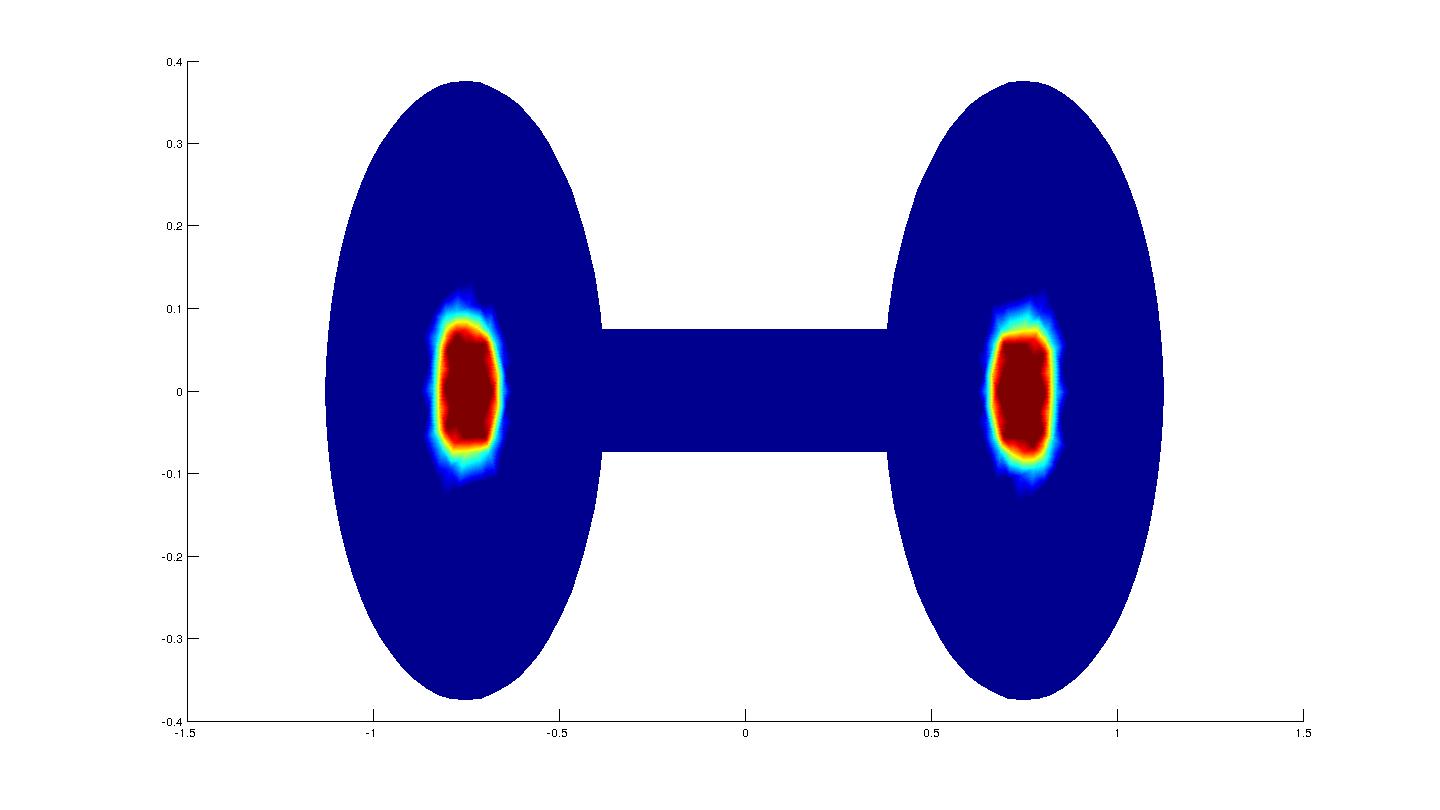}
    \caption{$\gamma = 0.05$}
    \label{fig:dumbbell_005_minimizer}
  \end{subfigure}
 ~
  \begin{subfigure}[b]{0.3\textwidth}
    \includegraphics[width=\textwidth]{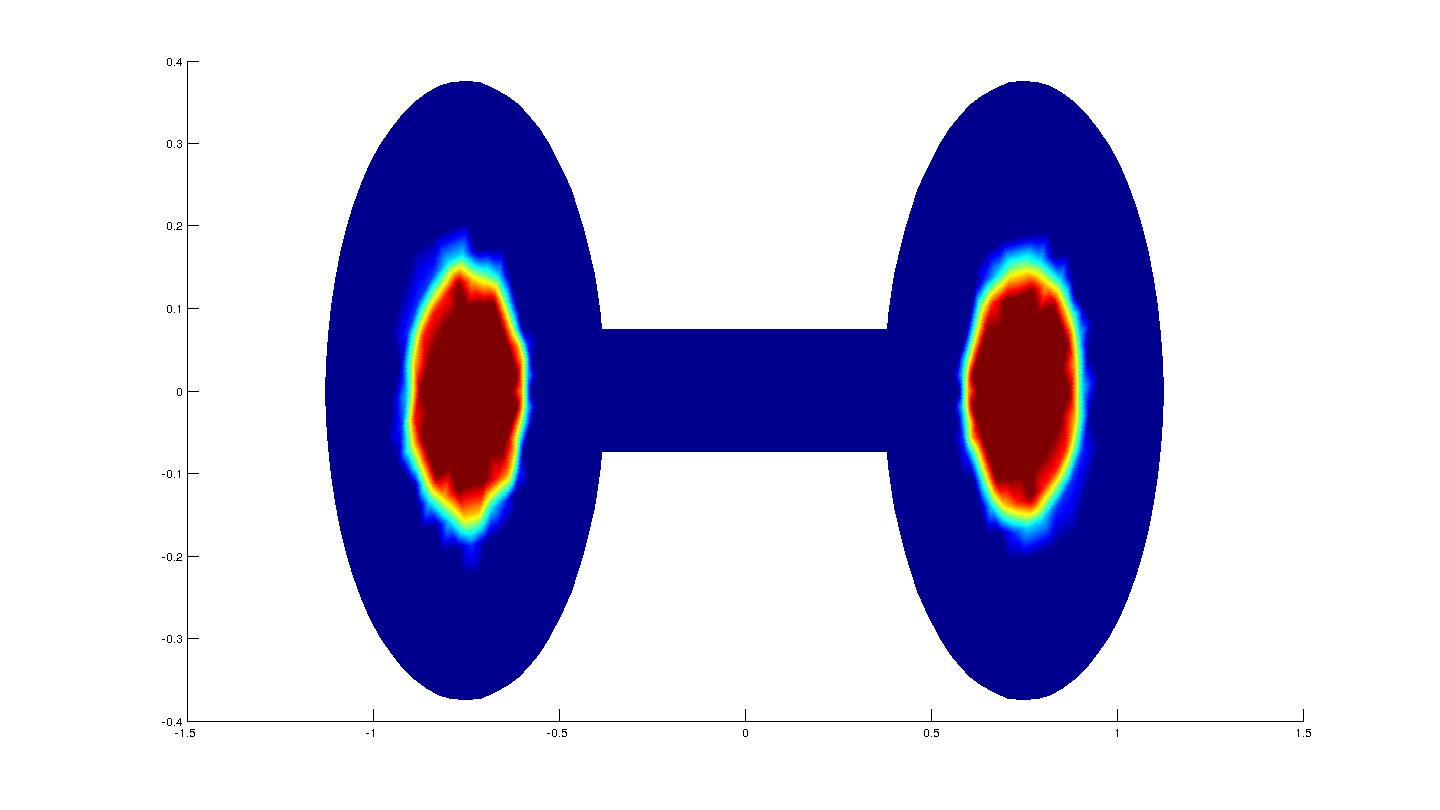}
                \caption{$\gamma = 0.15$}
                \label{fig:dumbbell_015_minimizer}
        \end{subfigure}%
        ~ 
        \begin{subfigure}[b]{0.3\textwidth}
                \includegraphics[width=\textwidth]{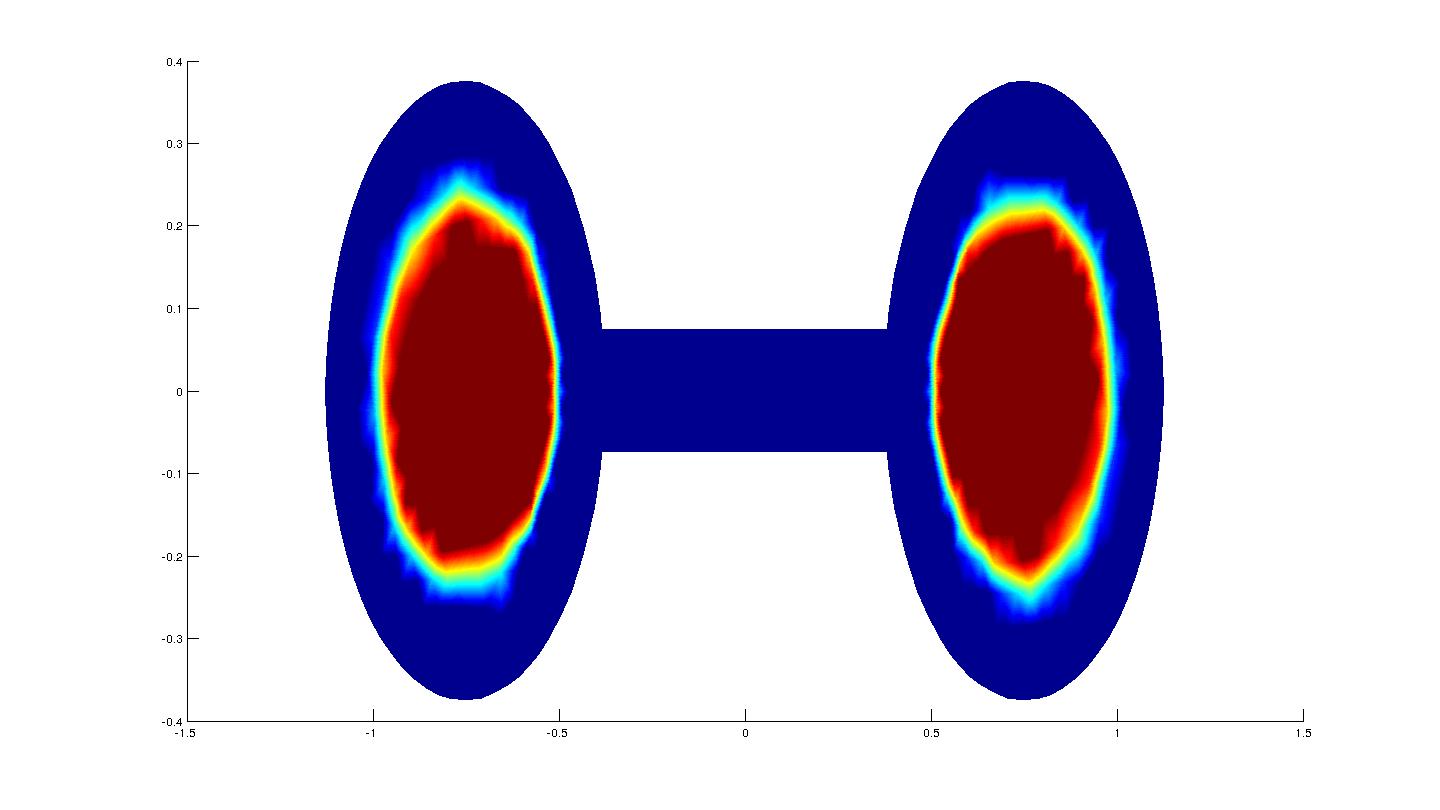}
                \caption{$\gamma = 0.35$}
                \label{fig:dumbbell_035_minimizer}
        \end{subfigure}
        \caption{Dumbbell shaped domain $D$ with $\absn{D} = 1$. It is
          clear that $\tilde{D}_{0.05} \subseteq \tilde{D}_{0.15}
          \subseteq \tilde{D}_{0.35}$, as stated in
          Proposition \ref{prop:monot_gamma} (\ref{item:MRa2}).}
        \label{fig:dumbbell}
\end{figure}

\section{Proof of Lemma~\ref{lem:basic_props_of_energy_func}}
\label{sec:proof-lemma-phi}

\begin{enumerate}[(i)]
\item We follow the ideas
  in~\cite{EmamizadehLiu:Constrained_unconstrained:2015} (also,
  see~\cite{Marras:2010:Optim_pLap}). Let $\left\{g_n\right\}
  \subseteq \overline{\cal{R}}$ and $g \in \overline{\cal{R}}$, such
  that $g_n\rightharpoonup g$ in $L^2(D)$. For simplicity, let us
  set $u_n \defeq u_{g_n}$ and $u \defeq u_g$. We have:
  \begin{equation}
    \label{eq:bvp2}
    \left\{
      \arrayoptions{1ex}{1.2}
      \begin{array}{ll}
        -\Delta u_n+g_nu_n=f &\text{in}\;D,\\
        u_n=0 &\text{on}\;\partial D.
      \end{array}
    \right.
  \end{equation}
  Multiplying the differential equation in (\ref{eq:bvp2}) by $u_n$, and
  integrating the result over $D$, yields
  \begin{equation}\label{eq3}\int_D|\nabla
    u_n|^2 \md x+\int_Dg_nu_n^2 \md x=\int_Dfu_n \md x.
  \end{equation}
  From Lemma \ref{lem:rearrange3}, we know that $g_n$ are
  non-negative. Therefore (\ref{eq3}) implies
  \begin{equation}\label{eq26}\int_D|\nabla u_n|^2 \md x\le\int_Dfu_n \md x.\end{equation} By applying H{\"o}lder's inequality and the
  Poincar$\acute{\text{e}}$ inequality to the right hand side of
  (\ref{eq26}) we obtain
  \begin{equation}
    \label{eq4}
    \int_D|\nabla u_n|^2 \md x\le C\left\|f\right\|_2\left\|
      u_n\right\|_{H_0^1(D)},
  \end{equation}
  in which $C$ is a positive constant. Whence, $\left\{u_n\right\}$ is a
  bounded sequence in $H_0^1(D)$. This in turn implies existence of a
  subsequence of $\{u_n\}$, still denoted $\{u_n\}$, and $w\in
  H^1_0(D)$, such that:
  \begin{equation*}
    u_n\rightharpoonup
    w\;\;\;\text{in}\;H^1_0(D)\;\;\;\;\;
    \text{and}\;\;\;\;\;u_n\rightarrow w\;\;\;\text{in}\;L^2(D).
  \end{equation*}
  Let us prove that $w=u$, where $u$ is the solution of
  \begin{equation}
    \label{bvp20}
    \left\{
      \arrayoptions{1ex}{1.2}
      \begin{array}{ll}
        -\Delta u+gu=f &\text{in}\;D,\\
        u=0 &\text{on}\;\partial D.
      \end{array}
    \right.
  \end{equation}
  Indeed, by (\ref{eq:bvp2}) we have
  $$\int_D\nabla u_n\cdot\nabla\phi \,  \md x+\int_Dg_nu_n\phi  \md x=\int_Df
  \phi  \md x,\ \ \ \ \forall \phi\in C^\infty_0(D).$$ Since
  $u_n\rightharpoonup w$ in $H^1_0(D)$, $g_n\rightharpoonup g$ in
  $L^2(D)$, and $u_n\to w$ strongly in $L^2(D)$, from the latter
  equation we find
  \begin{equation*}
    \int_D\nabla w\cdot\nabla\phi \,  \md x+\int_Dgw\, \phi  \md x=\int_Df
    \phi  \md x,\ \ \ \ \forall \phi\in C^\infty_0(D).
  \end{equation*}
  This means that $w$ is a solution of (\ref{bvp20}), and by uniqueness,
  we must have $w=u$.  To prove (\ref{item:weakly_continuous}), we
  observe that
  \begin{equation*}
    \Big|\Phi(g_n)-\Phi(g)\Big|=\Big|\int_Df(u_n-u) \md x\Big|\le
    ||f||_2 \,  ||u_n-u||_2
  \end{equation*}
  which together with the fact that $\lim_{n \to \infty}||u_n-u||_2 = 0$
  implies (\ref{item:weakly_continuous}).

\item Let $h,g\in\overline{\cal{R}}$, $0<t<1$, and
  $\xi_t=th+(1-t)g$. For $v\in H^1_0(D)$, we have
  \begin{multline}
    \label{eq6}
    2\int_Dfv \md x-\int_D|\nabla
    v|^2 \md x - \int_D\xi_tv^2 \md x  =  \\
    t\left(2\int_Dfv \md x-\int_D|\nabla v|^2 \md x -\int_Dhv^2 \md x\right) \\
    + (1-t)\left(2\int_Dfv \md x-\int_D|\nabla v|^2 \md x - \int_Dgv^2 \md x\right)
  \end{multline}
  By taking the supremum of (\ref{eq6}) with respect to $v\in
  H^1_0(D)$, we obtain
  \begin{equation}
    \label{eq7}
    \Phi(th+(1-t)g)\le t\Phi(h)+(1-t)\Phi(g).
  \end{equation}
  This proves the convexity of $\Phi$.  We now show, by
  contradiction, that $\Phi$ is in fact strictly convex. To this
  end, we assume that there exists $t\in(0,1)$ such that
  $\Phi(th+(1-t)g)=t \, \Phi(h)+(1-t)\Phi(g)$. For simplicity, we
  use $u_t$ in place of $u_{th+(1-t)g}$. So, we have:
  \begin{multline}
    \label{eq8}
    2\int_Dfu_t \md x-\int_D|\nabla
    u_t|^2 \md x - \int_D\xi_tu_t^2 \md x   = \\
    t\left(2\int_Dfu_h \md x-\int_D|\nabla u_h|^2 \md x -\int_Dhu_h^2 \md x\right)\\
    +  (1-t)\left(2\int_Dfu_g \md x-\int_D|\nabla u_g|^2 \md x -
      \int_Dgu_g^2 \md x\right).
  \end{multline}
  From (\ref{eq8}), we deduce the following equations:
  \begin{equation}
    \label{eq27}
    2\int_Dfu_h \md x-\int_D|\nabla u_h|^2 \md x -\int_Dhu_h^2 \md x =
    2\int_Dfu_t \md x-\int_D|\nabla u_t|^2 \md x-\int_Dhu_t^2 \md x,
  \end{equation}
  and
  \begin{equation}
    \label{eq28}
    2\int_Dfu_g \md x-\int_D|\nabla
    u_g|^2 \md x -\int_Dgu_g^2 \md x =
    2\int_Dfu_t \md x-\int_D|\nabla u_t|^2 \md x-\int_Dgu_t^2 \md x.
  \end{equation}
  From the maximality of $u_h$ coupled with (\ref{eq27}), we infer
  $u_h=u_t$. Similarly, from the maximality of $u_g$ and
  (\ref{eq28}), we find $u_g=u_t$.  Hence, $u_t=u_h=u_g$. On the
  other hand, from the differential equations $$-\Delta
  u_h+hu_h=f,\;\ \text{a.e. in}\,\ D,$$ and
  \begin{equation*}
    -\Delta u_g+gu_g=f,\;\ \text{a.e. in}\,\ D,
  \end{equation*}
  we infer $(h-g)u_h=0$, almost everywhere in $D$. Since $u_h$ is
  positive by the strong maximum principle, we must have $h=g$
  almost everywhere in $D$. Therefore, the strict convexity is
  proved.

\item For simplicity, we set $u_t \defeq u_{\xi_t}$. We know that:
  \begin{equation}
    \label{bvp10}
    \left\{
      \arrayoptions{1ex}{1.2}
      \begin{array}{ll}
        -\Delta u_t+\xi_tu_t=f &\text{in}\;D, \\
        u_t=0 &\text{on}\;\partial D,
      \end{array}
    \right.
  \end{equation}
  and
  \begin{equation}
    \label{bvp11}
    \left\{
      \arrayoptions{1ex}{1.2}
      \begin{array}{ll} -\Delta u+gu=f &\text{in}\;D,\\
        u=0 &\text{on}\;\partial D.
      \end{array}
    \right.
  \end{equation}
  From (\ref{bvp10}) and (\ref{bvp11}), we obtain:
  \begin{equation}
    \label{eq29}
    -\Delta (u_t-u)+g(u_t-u)=gu_t-\xi_tu_t=(g-\xi_t)u_t.
  \end{equation}
  Multiplying (\ref{eq29}) by $u_t+u$, and integrating the result
  over $D$, we get:
  \begin{multline}
    \label{eq30}
    \int_D|\nabla u_t|^2 \md x-\int_D|\nabla u|^2 \md x +\int_D gu_t^2 \md x -
    \int_Dgu^2 \md x \\
    = \int_D(g-\xi_t)u_t(u_t+u) \md x
    = -t\int_D(h-g)u_t(u_t+u) \md x.
  \end{multline}
  From (\ref{eq30}), we derive
  $\Phi(\xi_t)-\Phi(g)=-t\int_D(h-g)u_tu \md x$, which in turn implies:
  \arrayoptions{0.5ex}{1.1}
  \begin{eqnarray}
    \label{eq31}
    \Phi(\xi_t)-\Phi(g)+t\int_D(h-g)u^2 \md x&=&-t\int_D(h-g)u_tu \md x+t\int_D(h-g)u^2 \md x
    \nonumber \\ &=&-t\int_D(h-g)(u_t-u)u \md x.
  \end{eqnarray}
  By applying H{\"o}lder's inequality to the right hand side of (\ref{eq31}), we find
  \begin{equation}\label{eq32}\left|\Phi(\xi_t)-\Phi(g)+t\int_D(h-g)u^2 \md x\right|\le t\left\|h-g\right\|_\infty\left\|u_t-u\right\|_2\left\|u\right\|_2.
  \end{equation}
  Since $\xi_t\rightharpoonup g$ weakly in $L^2(D)$ (and even strongly),
  by the proof of part (\ref{item:weakly_continuous}), we have
  $||u_t-u||_2\to 0$ as $t\to 0$. Hence, dividing by $t$ in (\ref{eq32})
  and letting $t\to 0$ we get the desired result. \qed
\end{enumerate}


\section{Concluding remarks}

\label{sec:concluding-remarks}

In the main result of the current paper, {\ie},
Theorem~\ref{thm:main}, we proved existence and uniqueness of
solutions for an optimization problem arising in construction of
robust membranes, with no restriction on the number of materials
used. This is yet another witness to the power and elegance of the
theory behind optimization of convex functionals over rearrangement
classes, as laid out by Burton~\cite{Burton:1987}. Although the theory
was originally devised for studying vortex rings, {\ie}, in the
context of fluid dynamics, ever since its introduction, there has been
a steady flow of contribution to the theory and its applications, in
fluid
mechanics~\cite{Elcrat:Rearrangements_steady_vortex:1991,Elcrat:iter_Steady_Vortices_Rearrangement:1995},
finance~\cite{Emamizadeh_AlHanai:Real_Estate:2009,Rueschendorf:Risk_Analysis:Rearrangements:book:2013},
free boundary problems~\cite{EmamizadehMarras:RearrFreeBound:2014},
population biology~\cite{Emamizadeh_Farjudian_Liu-Harvesting:2017},
and eigenvalue problems \cite{EmamizadehZivari:Steklov:2011}, to name
a few.

For the particular problem considered in the current paper, we showed
that there cannot be any non-global local optima
(Theorem~\ref{thm:local_minim}). This has to be contrasted with other
rearrangement optimization problems where local optima and saddle
points
abound~\cite{Burton:1989,Burton:saddle:1989,Emamizadeh_Farjudian_Zivari:Nonlocal_Kirchhoff:2016,Emamizadeh_Farjudian_Liu-Harvesting:2017}. Furthermore,
we managed to deepen our understanding of the problem through some
stability results, which are, very difficult to prove, or even
formulate, in the presence of symmetry breaking, such as those
occurring
in~\cite{Emamizadeh_Farjudian_Zivari:Nonlocal_Kirchhoff:2016,Emamizadeh_Farjudian_Liu-Harvesting:2017}.




\bibliographystyle{plain}


\end{document}